\newcommand{\mM}{\mathcal{M}}
\newcommand{\CC}{\mathbf{C}}
\newcommand{\RR}{\mathbf{R}}
\newcommand{\QQ}{\mathbf{Q}}
\newcommand{\ZZ}{\mathbf{Z}}
\newcommand{\ev}{\mathrm{ev}}
\newcommand{\OP}{\operatorname}
\numberwithin{equation}{section}
\newtheorem{thm}[equation]{Theorem}
\newtheorem{lma}[equation]{Lemma}
\newtheorem{prp}[equation]{Proposition}
\newtheorem{cor}[equation]{Corollary}
\newtheorem{mainthm}{Theorem}
\newtheorem{maincor}[mainthm]{Corollary}
\newtheorem{clm}{Claim}
\theoremstyle{definition}
\newtheorem{dfn}[equation]{Definition}
\theoremstyle{remark}
\newtheorem{rmk}[equation]{Remark}
\newtheoremstyle{TheoremNum}
    {\topsep}{\topsep}              
    {\itshape}                      
    {}                              
    {\bfseries}                     
    {.}                             
    { }                             
    {\thmname{#1}\thmnote{ \bfseries #3}}
\theoremstyle{TheoremNum}
\theoremstyle:=definition,remark,plain,TheoremNum\do{%
\expandafter\g@addto@macro\csname th@\theoremstyle\endcsname{%
\addtolength\thm@preskip\parskip 
}%
} 
\title[Unlinking of monotone Lagrangians]{Unlinking and unknottedness of monotone Lagrangian submanifolds}
\author[G. Dimitroglou Rizell]{Georgios Dimitroglou Rizell}
\address{Universit\'{e} Libre Bruxelles, CP218, Bd du Triomphe, B-1050 Bruxelles, Belgique}
\email{georgios.dimitroglou@ulb.ac.be}
\author[J. D. Evans]{Jonathan David Evans}
\address{Department of Mathematics, University College London, Gower Street, London WC1E 6BT, United Kingdom}
\email{j.d.evans@ucl.ac.uk}
\thanks{ Our collaboration was supported by a travel grant from the Contact And Symplectic Topology network (CAST) which is funded by the European Science Foundation (ESF). The first author was supported by the ERC Starting Grant of Fr{\'e}d{\'e}ric Bourgeois StG-239781-ContactMath.}
\begin{document}
\begin{abstract}
Under certain topological assumptions, we show that two mono\-tone Lagr\-ang\-ian submanifolds embedded in the standard symplectic vector space with the same monotonicity constant cannot link one another and that, individually, their smooth knot type is determined entirely by the homotopy theoretic data which classifies the underlying Lagrangian immersion. The topological assumptions are satisfied by a large class of manifolds which are realised as monotone Lagrangians, including tori. After some additional homotopy theoretic calculations, we deduce that all monotone Lagrangian tori in the symplectic vector space of odd complex dimension at least five are smoothly isotopic.
\end{abstract}
\maketitle
\part{Introduction}

Consider two $n$-dimensional embedded submanifolds, $L_1$ and $L_2$, of $2n$-di\-men\-si\-on\-al Euclidean space $\RR^{2n}$. We say that $L_1$ {\em links} $L_2$ if $L_1$ is homologically essential in the complement of $L_2$. We say that $L_1$ and $L_2$ are {\em not linked} if each one is nullhomologous in the complement of the other.

When $L_1$ and $L_2$ are diffeomorphic then we can ask if they are isotopic through embedded submanifolds. If they are not then we say they are {\em relatively knotted}. In high dimensions, $n\geq 4$, the issue of knottedness is related to the question of linking via a theorem of Haefliger and Hirsch \cite{HaeHir}. We discuss this in more depth in Section \ref{sect-HaeHir} below.

Now suppose that $\RR^{2n}$ is equipped with its standard symplectic form $\omega_0$ and that $L_1$ and $L_2$ are required to be Lagrangian submanifolds (that is, $\omega_0$ vanishes on the tangent spaces of $L_1$ and $L_2$). The question of whether $L_1$ and $L_2$ can be linked or relatively knotted is subtle and has been much studied, \cite{Bor,EliPol,EliPol2,EliPol3,EliPol4,Eva,Hin,HinIvr,Lut}. We review some known results in Section \ref{sect-context} below.

\section{Statement of results}

The theorems we prove concern knotting and linking of Lagrangian submanifolds in the standard symplectic vector space $\CC^n$, or more generally in a subcritical Stein manifold. We need to restrict attention to a certain class of Lagrangian embeddings: the monotone ones, see Definition \ref{dfn-monotone}. Note that by {\em torus} we always mean a product of circles.

\begin{mainthm}\label{mainthm-unlink}
Let $X$ be a subcritical Stein manifold and let $0<K_1\leq K_2$ be real numbers. An $K_1$-monotone Lagrangian torus embedded in the complement $C$ of an embedded $K_2$-monotone Lagrangian torus must be homologically trivial in $C$. In particular, two embedded $K$-monotone Lagrangian tori are each nullhomologous in the complement of the other.
\end{mainthm}

Using our definition of linking, the above theorem states that a $K_1$-monotone Lagrangian torus cannot link a $K_2$-monotone torus if $K_1\leq K_2$ and that monotone Lagrangian tori with the same monotonicity constant are not linked. This theorem fails if we let $K_2<K_1$: consider a pair of concentric circles with different radii in $\CC$. The theorem is clear for circles in $\CC$ by an area argument. It is less intuitively clear why it should be true in higher dimensions. Our proof fills the smaller Lagrangian by holomorphic discs of Maslov index two and uses a neck-stretching argument to ensure that these discs are disjoint from the bigger Lagrangian. The discs provide the necessary nullhomology of $L_1$ in the complement of $L_2$.

\begin{mainthm}\label{mainthm-unknot}
Suppose $L_1$ and $L_2$ are embedded monotone Lagrangian $n$-tori in $\CC^n$, $n\geq 4$, and suppose that they are homotopic through Lagrangian immersions. Then they are smoothly isotopic through embeddings (not necessarily Lagrangian embeddings).
\end{mainthm}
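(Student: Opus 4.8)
The plan is to reduce the smooth isotopy problem, via the classical embedding theory of Haefliger and Hirsch, to the linking question already settled by Theorem~\ref{mainthm-unlink}. Write $L\cong T^n$ for the abstract $n$-torus and regard $L_1,L_2$ as embeddings $L\hookrightarrow\RR^{2n}=\CC^n$. The first step is to extract a smooth regular homotopy: a homotopy through Lagrangian immersions is, after forgetting the Lagrangian condition, nothing but a smooth regular homotopy, so the hypothesis already tells us that $L_1$ and $L_2$ lie in the same regular homotopy class of immersions $L\hookrightarrow\RR^{2n}$. This is the only way the Lagrangian homotopy hypothesis enters.

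I would then normalise $L_2$ by two ambient isotopies of $\CC^n$ that do not affect this class. Rescaling $z\mapsto cz$ by a real $c>0$ is a conformal symplectomorphism fixing every linear Lagrangian plane; it preserves the torus and Lagrangian conditions and multiplies the monotonicity constant by $c^2$, so it lets me arrange that $L_1$ and $L_2$ share a common monotonicity constant $K$. A further translation, also a symplectomorphism isotopic to the identity, makes $L_2$ disjoint from the compact set $L_1$. Since both modifications are ambient isotopies they preserve the regular homotopy class and the smooth isotopy type, so it suffices to treat the normalised pair.

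Next I would invoke the results of Section~\ref{sect-HaeHir}. For a closed $n$-manifold in $\RR^{2n}$ with $n\geq 4$ we are comfortably in the metastable range, where the theorem of Haefliger and Hirsch classifies embeddings up to smooth isotopy by their regular homotopy class together with a single linking obstruction. Having made $L_1$ and $L_2$ disjoint, this obstruction is their mutual linking, recorded by the classes $[L_1]\in H_n(\RR^{2n}\setminus L_2)$ and $[L_2]\in H_n(\RR^{2n}\setminus L_1)$; its vanishing allows the regular homotopy to be upgraded, through a Whitney trick that is available once $n\geq 4$, to an isotopy. Because $L_1$ and $L_2$ are now embedded $K$-monotone Lagrangian tori, Theorem~\ref{mainthm-unlink} shows they are not linked, so both of these classes vanish; the Haefliger--Hirsch obstruction therefore vanishes and the normalised tori are smoothly isotopic. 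Undoing the rescaling and translation of $L_2$ then yields a smooth isotopy between the original $L_1$ and $L_2$.

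I expect the main difficulty to lie not in this final deduction but in the input from Section~\ref{sect-HaeHir}: the precise identification of the abstract Haefliger--Hirsch isotopy obstruction with the homological linking invariant that appears in Theorem~\ref{mainthm-unlink}, together with a careful check of signs and orientations and the verification that the normal bundle data of the torus contributes no further obstruction beyond the one that unlinking removes. Getting this matching exactly right — so that ``regularly homotopic plus unlinked'' really does force smoothly isotopic in this codimension — is the crux on which the whole argument turns.
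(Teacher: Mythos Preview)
Your identification of the Haefliger--Hirsch obstruction with the mutual linking of $L_1$ and $L_2$ is not correct, and this is a genuine gap rather than a detail to be checked. To see why, take any two embeddings of $T^n$ in $\CC^n$ lying in distinct smooth isotopy classes but the same regular homotopy class (such pairs exist: the isotopy classes form $H_1(T^n;Y(n))$, which is much larger than the set of regular homotopy classes). Translate one of them into a ball disjoint from the other. Each now bounds inside its own ball, so they are unlinked in your sense, yet they remain non-isotopic. Thus ``regularly homotopic plus mutually unlinked'' cannot force isotopy, and Theorem~\ref{mainthm-unlink} applied to the pair $(L_1,L_2)$ does not touch the actual obstruction.

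The paper's argument uses the unlinking theorem in a different and more delicate way. The Haefliger--Hirsch invariant of a \emph{single} embedding $f\colon L\to\CC^n$ is built from a normal field $v$ on $L_x$ characterised by a \emph{self}-unlinking condition: the pushoff of $L$ along $v$ must be nullhomologous in $\CC^n\setminus L$. For a monotone Lagrangian torus one takes $v=J(f)_*\nabla\sigma$, where $\sigma\colon L\to S^1$ represents half the Maslov class; the pushoff is then itself a monotone Lagrangian torus with strictly smaller monotonicity constant (Lemma~\ref{lma-monopush}), and the unlinking theorem (in the form of Theorem~\ref{unlink}) applied to $L$ and its \emph{own} pushoff verifies that $v$ really is a Haefliger--Hirsch field (Lemma~\ref{lma-HaeHir}). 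Once this is known for each $L_i$ separately, the hypothesis that the Lagrangian frame maps are homotopic (equivalent, via Gromov--Lees, to your Lagrangian-immersion homotopy) directly gives a homotopy of Haefliger--Hirsch maps (Lemma~\ref{lma-htpy-HaeHir}), and Theorem~\ref{thm-HaeHir} finishes. So the Lagrangian hypothesis is used for more than the underlying regular homotopy: it pins down the Haefliger--Hirsch field via the Maslov class, and the unlinking input is a self-linking statement for each torus, not a mutual one.
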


This is a consequence of Theorem \ref{mainthm-unlink} using Haefliger-Hirsch theory; this theory reduces questions about knottedness of middle-dimensional submanifolds of Euclidean space to computations of self-linking numbers. Roughly speaking, it associates to an embedding a certain homotopy class of normal vector fields and two embeddings are isotopic if and only if their Haefliger-Hirsch fields are homotopic (this is explained fully in Section \ref{sect-HaeHir}).

The key point in our proof is Lemma \ref{lma-HaeHir} which identifies the Haefliger-Hirsch field for a monotone Lagrangian torus purely in terms of the Maslov class. This is achieved by showing that a deformation of a monotone Lagrangian $L$ off itself in the normal direction $J\nabla\sigma$, where $\sigma$ is a submersion $L\to S^1$ representing the Maslov class and $J$ is a compatible almost complex structure, is again monotone with smaller monotonicity constant. Applying Theorem \ref{mainthm-unlink}, we deduce that the pushoff is nullhomologous in the complement of $L$. From this we deduce that $J\nabla\sigma$ is the Haefliger-Hirsch field.

By performing some homotopy computations inspired by \cite{Bor} we also derive:

\begin{maincor}\label{maincor-unknot}
If $n\geq 5$ is odd then all monotone Lagrangian $n$-tori in $\CC^n$ are smoothly isotopic.
\end{maincor}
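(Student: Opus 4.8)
The plan is to combine Theorem \ref{mainthm-unknot} with a homotopy-theoretic classification of the Lagrangian immersions of $T^n$ into $\CC^n$. By Theorem \ref{mainthm-unknot}, any two monotone Lagrangian tori that are homotopic through Lagrangian immersions are already smoothly isotopic, so it suffices to show that \emph{every} embedded monotone Lagrangian $n$-torus in $\CC^n$ falls into a single such homotopy class (after reparametrisation, i.e.\ after allowing a self-diffeomorphism of $T^n$). Thus the entire content of the corollary is reduced to a statement about $\pi_0$ of the space $\Imlag(T^n,\CC^n)$ of Lagrangian immersions, taken up to the action of $\Diff(T^n)$, for $n\geq 5$ odd.

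First I would invoke the Gromov--Lees h-principle, which identifies the space of Lagrangian immersions $T^n\to\CC^n$ (up to homotopy) with the space of \emph{formal} Lagrangian immersions: pairs consisting of a map $f\colon T^n\to\CC^n$ together with a bundle map $TT^n\to\CC^n$ covering $f$ whose image is Lagrangian at each point. Since $\CC^n$ is contractible the underlying map $f$ carries no information, so the classifying data reduces to a lift of the stable Gauss map to the Lagrangian Grassmannian; concretely, a Lagrangian immersion is determined up to homotopy by a map $T^n\to U(n)/O(n)=\Lambda_n$, equivalently by the homotopy class of the induced Gauss map into the Lagrangian Grassmannian. The set $\pi_0\Imlag(T^n,\CC^n)$ is therefore computed as homotopy classes of maps $[T^n,\Lambda_n]$, and I would extract this using the CW-structure of $T^n$ together with the homotopy groups $\pi_k(\Lambda_n)$ (Bott-type periodicity and stability data for $U(n)/O(n)$), exactly the kind of computation signalled by the reference to \cite{Bor}.

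The key step is then to show that, once one quotients by the reparametrisation action of $\Diff(T^n)$ (which acts on $[T^n,\Lambda_n]$ through its action on $H^*(T^n)$, i.e.\ essentially through $GL_n(\ZZ)$ permuting and mixing the circle factors) and imposes the \emph{monotonicity} constraint, the resulting set has a single element when $n$ is odd and $n\geq 5$. Monotonicity is crucial here: it pins down the relevant characteristic-class data (the Maslov class and the obstruction to the Lagrangian immersion being regular homotopic to a standard product torus) so that the a priori several immersion classes collapse. I expect the odd-$n$ hypothesis to enter through the parity of certain homotopy groups of $\Lambda_n$ and the vanishing of a $\ZZ/2$ or torsion obstruction that would otherwise survive in even dimensions; this is where the computation genuinely depends on $n$.

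The main obstacle I anticipate is precisely this last homotopy-theoretic enumeration: showing that the $\Diff(T^n)$-orbit is a single point requires controlling all the cells of $T^n$ up to the top dimension, not merely the low-dimensional skeleton, and the higher homotopy groups of $U(n)/O(n)$ are delicate. In particular I would need to verify that the potentially distinguishing invariants living in $H^k(T^n;\pi_k(\Lambda_n))$ are either forced to vanish by monotonicity or are absorbed into a single orbit under the $GL_n(\ZZ)$-action, and to check carefully that the parametrisation-freedom (reparametrisation) is exactly what is needed to trivialise the remaining ambiguity. Once that enumeration yields a unique class, the corollary follows immediately by feeding that class back into Theorem \ref{mainthm-unknot}.
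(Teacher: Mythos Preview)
Your plan routes through Theorem~\ref{mainthm-unknot}, which requires the two embeddings to be homotopic through Lagrangian immersions, and this is where the gap lies. You would need to show that, up to reparametrisation, all monotone Lagrangian embeddings of $T^n$ lie in a single Lagrangian-immersion class; that is stronger than what the corollary asserts and there is no reason to expect it to be true. Homotopy classes of Lagrangian frame maps correspond to $[T^n,U(n)]$, and beyond the Maslov class in $H^1$ there are higher obstructions living in $H^{2k-1}(T^n;\pi_{2k-1}(U(n)))\cong H^{2k-1}(T^n;\ZZ)$ for $3\le 2k-1\le n$. Monotonicity constrains only the $H^1$ piece, and the $GL_n(\ZZ)$-action does not collapse, say, $H^3(T^n;\ZZ)$ to a single orbit. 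So the enumeration you flag as the ``main obstacle'' is not merely delicate; the target statement is likely false.

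The paper sidesteps this entirely by \emph{not} going through Theorem~\ref{mainthm-unknot}. After reparametrising so that the Maslov classes of $f_1,f_2$ agree (minimal Maslov number $2$ makes the Maslov class primitive, and $GL_n(\ZZ)$ acts transitively on primitive classes in $H^1(T^n;\ZZ)$), one works directly with the Haefliger--Hirsch obstruction $\epsilon(f_1,f_2)\in H^{n-1}(L;\pi_{n-1}(V_{2n,n+1}))$. Lemma~\ref{lma-HaeHir} identifies the Haefliger--Hirsch field of each $f_i$ as $J(f_i)_*\nabla\sigma$, and Proposition~\ref{prp-diff-class} then shows that under this identification $\epsilon(f_1,f_2)$ factors through the map $\pi_{n-1}(U(n))\to\pi_{n-1}(V_{2n,n+1})$; for $n$ odd, $\pi_{n-1}(U(n))=0$ by Bott periodicity and $\epsilon(f_1,f_2)$ vanishes automatically. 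The point is that the Haefliger--Hirsch invariant is much coarser than the Lagrangian-immersion class, and for odd $n$ it dies without any control on the full frame map.
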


First note that this is true at the level of maps: we do not need to repara\-metr\-ise the tori to make them isotopic. Also note that this is really a rigidity theorem for Lagrangians and the proof uses hard tools (pseudoholomorphic curves) in an essential way: if we were to relax the Lagrangian condition to totally real then there are totally real embeddings of tori representing all isotopy classes of smooth embeddings (by the h-principle for totally real embeddings) and the isotopy classes of smooth embeddings are in bijection with $H_1(L;Y(n))$ where
\[Y(n)=\begin{cases}
\ZZ&\mbox{ if }n\equiv 1\mod 2\\
\ZZ/2&\mbox{ if }n\equiv 0\mod 2
\end{cases}\]
(see Section \ref{sect-HaeHir}). There are many examples of Hamiltonian non-isotopic monotone Lagrangian tori in high-dimensional symplectic vector spaces due to Chekanov and Schlenk \cite{ChekSch} which are, reassuringly, known to be smoothly (in fact Lagrangian) isotopic. In Section \ref{sect-knots} we will construct some examples of relatively knotted monotone tori with non-homotopic Gauss maps when $n$ is even. These are similar to the smoothly knotted Lagrangian $S^1\times S^3$ examples found in \cite{Bor}.

Both of these theorems will be proved in greater generality below, see Theorem \ref{unlink} and Theorem \ref{unknot} respectively. In particular, we will not require $L_1$ and $L_2$ to be tori, but the torus is the simplest manifold satisfying the topological conditions we require. Indeed for Theorem \ref{mainthm-unlink} we do not even require that the two Lagrangians are diffeomorphic. We prove Corollary \ref{maincor-unknot} in Section \ref{sect-cor-prf}.

In Section \ref{sect-gen} we will prove a harder result (Theorem \ref{unknot-tweak}):
\begin{mainthm}\label{mainthm-unknot-tweak}
Suppose $L_1$ and $L_2$ are embedded Lagrangian copies of $S^1\times S^{n-1}$ in $\CC^{n}$ and suppose that they are homotopic through Lagrangian immersions. If
\begin{itemize}
\item $n=2k+2>4$ and $L_1$, $L_2$ have minimal Maslov number $2k+2$ or
\item $n=2k+1>4$ and $L_1$, $L_2$ are monotone
\end{itemize}
then they are smoothly isotopic through embeddings (not necessarily Lagrangian embeddings).
\end{mainthm}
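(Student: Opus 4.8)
Because $S^1\times S^{n-1}$ need not satisfy the topological hypotheses built into the general unknottedness theorem \ref{unknot}, the plan is to reprove its two constituent steps directly for this manifold. The organising framework is the Haefliger--Hirsch theory of Section \ref{sect-HaeHir}: for $n\geq 4$ the smooth isotopy classes of embeddings of a closed oriented $n$-manifold $L$ into $\RR^{2n}$ lying in a fixed regular homotopy class form a torsor over an abelian obstruction group $G(L)$, and the class distinguishing two such embeddings is a relative linking invariant, read off from the $\ZZ/2$-equivariant map from the deleted product $\tilde L=(L\times L)\setminus\Delta$ to $S^{2n-1}$ sending $(x,y)$ to $(f(x)-f(y))/|f(x)-f(y)|$. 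A homotopy through Lagrangian immersions is in particular a regular homotopy of the underlying smooth immersions, so $L_1$ and $L_2$ already share a regular homotopy class; the theorem therefore reduces to showing that their relative invariant in $G(S^1\times S^{n-1})$ vanishes.

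I would obtain this vanishing in two stages. The homological shadow of the relative invariant is the homological linking of $L_1$ and $L_2$, that is, the assertion that each is nullhomologous in the complement of the other, and this is exactly the conclusion of an unlinking statement. Here the two bulleted hypotheses enter. In the odd case $n=2k+1$ the Lagrangians are monotone and Theorem \ref{unlink} applies directly. In the even case $n=2k+2$ we have only minimal Maslov number $n$, not monotonicity, so Theorem \ref{unlink} is not available off the shelf; instead I would re-run the pseudoholomorphic-curve argument underlying it and check that this Maslov bound suffices in its place. The point is that the minimal Maslov number controls the disc bubbling that could perturb the moduli spaces computing the linking, and the value $n$ is precisely large enough to force the offending configurations into moduli spaces of the wrong expected dimension. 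Either way, $L_1$ does not link $L_2$ and the homological part of the relative invariant is zero.

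The main obstacle, which I expect to occupy most of the work, is a homotopy-theoretic computation of $G(S^1\times S^{n-1})$ showing that the relative invariant is detected by its homological shadow alone, so that unlinking forces the whole invariant to vanish. Concretely, I would analyse the $\ZZ/2$-equivariant cohomology of the deleted product $\tilde L$, with coefficients twisted by the antipodal action, through a K\"unneth decomposition and a Gysin argument adapted to the sphere factor of $S^1\times S^{n-1}$. The integral primary obstruction, in degree $2n-1$, is the homological linking already treated; the delicate point is whether a secondary, $\ZZ/2$-valued contribution in degree $2n$, coming from $\pi_{2n}(S^{2n-1})\cong\ZZ/2$, can survive. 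I expect the computation to show that it does not for $n>4$, while it can persist in the borderline dimension $n=4$, which is the homotopy-theoretic origin of the strict inequality $n>4$ in the hypotheses and of the knotted Lagrangian $S^1\times S^3$ of \cite{Bor}. Granting this, the common regular homotopy class together with trivial linking places $L_1$ and $L_2$ at the same point of the isotopy torsor, so they are smoothly isotopic; the parity of $n$ enters only through the coefficient system $Y(n)$, reproducing the two cases of the statement.
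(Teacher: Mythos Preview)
Your proposal rests on a conceptual misidentification of what the Haefliger--Hirsch invariant measures. The deleted-product map $(x,y)\mapsto (f(x)-f(y))/|f(x)-f(y)|$ is associated to a \emph{single} embedding $f$ and encodes the \emph{self}-linking of $f(L)$ with infinitesimally nearby pushoffs of itself; the difference class $\epsilon(f_1,f_2)$ compares two such self-linking data. It has nothing to do with whether $f_1(L)$ and $f_2(L)$ link one another in $\CC^n$: you could translate $f_2(L)$ far from $f_1(L)$ without changing either Haefliger--Hirsch invariant, so their mutual linking is trivially zero while $\epsilon(f_1,f_2)$ may well survive. What the paper actually does is identify, for \emph{each} embedding $f_i$ separately, the Haefliger--Hirsch field as $J(f_i)_*\nabla\sigma$ by showing that the pushoff $L_i'$ of $L_i$ along this normal field is nullhomologous in $\CC^n\setminus L_i$. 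This is a self-linking statement, and the holomorphic discs one needs have boundary on $L_i'$ and must avoid $L_i$, not the other Lagrangian.

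Even granting that reinterpretation, your claim that ``Theorem \ref{unlink} applies directly'' in the odd case is false: $S^1\times S^{n-1}$ admits no metric without contractible closed geodesics, since such a metric would force asphericity (by the Lyusternik--Fet argument invoked in Lemma \ref{lma-HaeHir}), and $S^1\times S^{n-1}$ has $\pi_{n-1}\cong\ZZ$. This is exactly why the paper cannot simply quote Theorem \ref{thm-avoid} or Theorem \ref{unlink} here. The substitute is an index computation (Claim \ref{lma-exp-dim} and its odd analogue): under neck-stretching around $L_i$, a limit building may now contain planes in the Weinstein neighbourhood asymptotic to contractible geodesics on $S^{n-1}$, but the component $v$ in the complement is then a punctured disc whose moduli space has expected dimension at most $4-n$, hence is generically empty for $n>4$. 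This is the genuine origin of the restriction $n>4$; it is analytic, not homotopy-theoretic. Indeed Borrelli's Luttinger-surgery argument \emph{does} cover $n=4$, so there is no secondary $\ZZ/2$ obstruction surviving there, and your proposed explanation of the dimension bound is incorrect. The homotopy-theoretic step in the paper (Proposition \ref{prp-diff-class}) is instead the observation that once the Haefliger--Hirsch field is $J\nabla\sigma$ for both embeddings, a homotopy of Lagrangian frame maps carries one Haefliger--Hirsch map to the other.
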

This result was proved for $n=4,8$ by Borrelli \cite{Bor}, even without the assumption on the minimal Maslov number, using completely different methods. It is mysterious to us that our technique cannot deal with the case $n=4$, but this restriction is needed to rule out certain bad limiting behaviour of punctured holomorphic curves under neck-stretching. We will also prove:
\begin{maincor}\label{maincor-sphere-unknot}
Let $n$ be an integer, $n\geq 5$. If $n$ is odd then all monotone Lagrangian embeddings of $S^1\times S^{n-1}$ in $\mathbf{C}^n$ are smoothly isotopic after reparametrisation. If $n$ is even then all Lagrangian embeddings of $S^1\times S^{n-1}$ in $\mathbf{C}^n$ with minimal Maslov number $n$ are smoothly isotopic.
\end{maincor}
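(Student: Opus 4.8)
The plan is to deduce the corollary directly from Theorem \ref{mainthm-unknot-tweak}: the two cases of that theorem impose exactly the hypotheses appearing here (monotonicity when $n$ is odd, minimal Maslov number $n=2k+2$ when $n$ is even), so the only thing left to establish is that any two embeddings satisfying these hypotheses are homotopic through Lagrangian immersions, after a reparametrisation. Thus the whole content is a homotopy-theoretic classification of Lagrangian regular homotopy classes of immersions $S^1\times S^{n-1}\to\CC^n$, together with a verification that the stated hypotheses single out a unique such class.

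First I would invoke the Gromov--Lees h-principle to identify Lagrangian regular homotopy classes of immersions $S^1\times S^{n-1}\to\CC^n$ with homotopy classes of formal Lagrangian immersions. Since $S^1\times S^{n-1}$ is parallelisable (its tangent bundle $\epsilon^1\oplus\pi^*TS^{n-1}$ is trivial, $S^{n-1}$ being stably parallelisable), a choice of parallelisation identifies these with the set $[S^1\times S^{n-1},U(n)]$, where a map into the space $U(n)$ of Lagrangian frames records the formal tangential data and the Maslov class $\mu\in H^1(S^1\times S^{n-1};\ZZ)=\ZZ$ is its $\pi_1$-component. To compute this set I would use the cofibre sequence $S^1\vee S^{n-1}\to S^1\times S^{n-1}\to S^1\wedge S^{n-1}=S^n$, whose top cell is attached along the Whitehead product $[\iota_1,\iota_{n-1}]$. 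Because Whitehead products vanish in the H-space $U(n)$, the relevant obstruction and connecting maps are null, and Bott periodicity (applicable since every cell has dimension at most $n\le 2n-1$, so $\pi_i(U(n))=\pi_i(U)$) yields $[S^1\times S^{n-1},U(n)]\cong\ZZ\oplus\ZZ$. One summand is $\mu$; the other, $\nu$, comes from $\pi_n(U(n))=\ZZ$ when $n$ is odd and from $\pi_{n-1}(U(n))=\ZZ$ when $n$ is even.

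Next I would pin down the Maslov class. For an orientable Lagrangian the mod-two Maslov class equals $w_1(TL)=0$, so $\mu$ is even and hence the minimal Maslov number $N_L$ is even. A monotone Lagrangian in $\CC^n$ is displaceable, so its self-Floer homology vanishes; feeding the generators of $H_*(S^1\times S^{n-1})$, which lie in degrees $0,1,n-1,n$, into the Oh spectral sequence and requiring convergence to zero forces these four classes to cancel in equal-difference pairs. The only such pairings of $\{0,1,n-1,n\}$ have difference $1$ or $n-1$, giving $N_L\in\{2,n\}$. When $n$ is odd, $N_L=n$ is excluded by parity and $N_L=2$ is forced; when $n$ is even, the hypothesis supplies $N_L=n$ directly. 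In both cases $\mu$ is determined up to sign, and the sign is reversed by an orientation-reversing reparametrisation of the $S^1$ factor.

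It remains to eliminate the second invariant $\nu$, and this I expect to be the main obstacle. The strategy is to analyse how reparametrisations act on the regular homotopy class: a diffeomorphism $\psi$ alters the classifying map by its derivative cocycle $A_\psi\colon S^1\times S^{n-1}\to O(n)\hookrightarrow U(n)$, and I would compute the effect on $\nu$ of the available diffeomorphisms---the twists $(\theta,x)\mapsto(\theta,\rho(\theta)x)$ with $\rho\colon S^1\to SO(n)$, together with orientation reversals of the two factors---in the spirit of \cite{Bor}. The delicate point is that these realise only a restricted orbit in the $\nu$-direction, so one must supplement them with a constraint from embeddedness: a Whitney-type self-intersection relation forcing $\nu$ to take a single value, for the fixed $\mu$, on those classes actually containing Lagrangian \emph{embeddings}, with any residual finite ambiguity absorbed by reparametrisation. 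Once all the embeddings under consideration are seen to share a single regular homotopy class, Theorem \ref{mainthm-unknot-tweak} delivers the desired smooth isotopy.
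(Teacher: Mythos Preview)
Your reduction to Theorem \ref{mainthm-unknot-tweak} forces you to prove that any two such embeddings become \emph{Lagrangian regularly homotopic} after reparametrisation, i.e.\ that you can kill the second invariant $\nu$. This is the genuine gap. The twist diffeomorphisms $(\theta,x)\mapsto(\theta,\rho(\theta)x)$ and factor reflections do not obviously act transitively on the $\nu$-summand, and your appeal to a ``Whitney-type self-intersection relation'' is not an argument: the number of double points of a generic Lagrangian immersion constrains a $\ZZ/2$ (or at best an integer parity) invariant, not a full $\ZZ$, and there is no reason the embeddedness constraint together with reparametrisation should collapse $\nu$ to a single value. Indeed the paper nowhere claims that these Lagrangians are all Lagrangian regularly homotopic; it only proves smooth isotopy.

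The paper sidesteps the $\nu$-problem entirely. It does not use Theorem \ref{mainthm-unknot-tweak} but rather its sharper form Theorem \ref{unknot-tweak}, whose proof shows that $J(f_i)_*\nabla\sigma$ is the Haefliger--Hirsch field, and then invokes Proposition \ref{prp-diff-class}: once the frame maps agree on the $(n-2)$-skeleton of $L$, the Haefliger--Hirsch difference class $\epsilon(f_1,f_2)$ vanishes for $n\ge 5$. Since the $(n-2)$-skeleton of $S^1\times S^{n-1}$ is just $S^1\times\{\mathrm{pt}\}$, agreement there is exactly agreement of Maslov classes, which you already arranged by reparametrising the $S^1$ factor. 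The mechanism behind Proposition \ref{prp-diff-class} is Lemma \ref{lma-htpy}: the composite $\pi_{n-1}(U(n))\to\pi_{n-1}(V_{2n,n+1})$ is zero for $n\ge 5$, so any discrepancy of frame maps over the $(n-1)$-cell (your $\nu$ when $n$ is even) maps to zero in the Haefliger--Hirsch obstruction group; when $n$ is odd, $\pi_{n-1}(U(n))=0$ and the top-cell contribution is invisible on $L_x$ anyway. Thus the paper never needs to control $\nu$ --- it shows that $\nu$ simply does not affect the smooth isotopy class.
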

\begin{rmk}
When $n$ is odd it may be necessary to reparametrise $S^1\times S^{n-1}$ by a reflection of $S^1$. This is not necessary for the case $n$ even: as Borrelli argues in {\cite[Lemma G]{Bor}}, when $n$ is even the action of diffeomorphisms on the space of embeddings is identified with the usual action of diffeomorphisms on $H^{n-1}(S^1\times S^{n-1};\ZZ/2)=\ZZ/2$ and hence trivial.
\end{rmk}

\section{Basic definitions}
Let $L$ be an oriented $n$-manifold. Equip $L$ with a Riemannian metric and let $\OP{Fr}^+(L)$ denote the bundle of positive orthonormal frames on $L$. Let $(X,\omega)$ be a symplectic manifold equipped with a compatible almost complex structure $J$ and corresponding almost K\"{a}hler metric $g$. Let $\OP{Fr}_U(X)$ denote the bundle of unitary frames on $X$.
\begin{dfn}
An immersion $f\colon L\looparrowright X$ is called Lagrangian if $f^*\omega=0$.
\end{dfn}
Given a Lagrangian immersion $f$, equip $L$ with the metric $f^*g$. One defines a {\em Lagrangian frame map}
\[\OP{Fr}(f)\colon\OP{Fr}^+(L)\to\OP{Fr}_U(X)\]
which sends an orthonormal frame of $T_pL$ to the pushed-forward frame considered as a unitary basis of $T_{f(p)}X$. Note that the $\RR$-span of a unitary basis of $T_xX$ is a Lagrangian $n$-plane and that all Lagrangian $n$-planes arise this way. In particular, $U(n)$ acts transitively on the {\em oriented Lagrangian Grassmannian} $\Lambda^+(n)$ of Lagrangian $n$-planes and the stabiliser is $SO(n)$, acting in the usual way on orthonormal frames in a fixed Lagrangian $n$-plane; so $\Lambda^+(n)\cong U(n)/SO(n)$. If we define $\Lambda(X)$ to be the {\em Lagrangian Grassmann bundle} of all Lagrangian $n$-planes in tangent spaces of $X$ then we see $\OP{Fr}_U(X)$ is an $SO(n)$-bundle over $\Lambda(X)$ and the map $\OP{Fr}{f}$ defined above is an $SO(n)$-equivariant bundle map living over the {\em Lagrangian Gauss map}
\[\Lambda(f)\colon L\to\Lambda(X)\]
which sends $p\in L$ to $f_*(T_pL)\in\Lambda(X)_{f(p)}$.
\begin{thm}[Gromov \cite{Gro}, Lees {\cite[Theorem 1 and Corollary 2.2]{Lee}}]\label{thm-gro-lee}
Two Lagrangian immersions
\[f_1,f_2\colon L\looparrowright X\]
are homotopic through Lagrangian immersions if and only if the maps
\[\OP{Fr}(f)_1,\OP{Fr}(f)_2\colon\OP{Fr}^+(L)\to\OP{Fr}_U(X)\]
are homotopic as $SO(n)$-equivariant bundle maps. Moreover, given an $SO(n)$-equivariant bundle map $\Phi\colon\OP{Fr}^+(L)\to\OP{Fr}_U(X)$ such that the underlying map $\phi\colon L\to X$ satisfies
\[[\phi^*\omega]=0\in H^2(L;\RR),\]
there exists a Lagrangian immersion $F\colon L\looparrowright X$ homotopic to $\phi$ with $\OP{Fr}(F)$ homotopic to $\Phi$.
\end{thm}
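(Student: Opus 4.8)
The statement is the (parametric) h-principle for Lagrangian immersions, and the plan is to prove it by recognising Lagrangian immersions as the holonomic solutions of a first-order differential relation and then invoking Gromov's flexibility machinery. First I would reformulate the homotopy-theoretic data. Because $\Lambda^+(n)\cong U(n)/SO(n)$, an $SO(n)$-equivariant bundle map $\Phi\colon\OP{Fr}^+(L)\to\OP{Fr}_U(X)$ covering $\phi\colon L\to X$ is exactly the same thing as a fibrewise injective bundle map $F\colon TL\to\phi^*TX$ whose image at each point $p$ is a Lagrangian $n$-plane in $T_{\phi(p)}X$: sending a positive orthonormal frame of $T_pL$ to a unitary frame spanning a Lagrangian plane is precisely the choice of such an $F$. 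I call the pair $(\phi,F)$ a \emph{formal Lagrangian immersion}. A genuine Lagrangian immersion $f$ produces the formal solution $(f,df)$, which is holonomic (here $F=df$), and the theorem asserts that the inclusion of holonomic solutions into the space of formal solutions satisfying $[\phi^*\omega]=0$ is a weak homotopy equivalence.

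One direction is immediate. If $f_t$ is a path of Lagrangian immersions then $\OP{Fr}(f_t)$ is a path of $SO(n)$-equivariant frame maps, so Lagrangian-homotopic immersions have homotopic frame maps. Likewise, for any genuine Lagrangian immersion $f^*\omega=0$ as a $2$-form, hence $[f^*\omega]=0\in H^2(L;\RR)$; this shows the cohomological hypothesis in the ``moreover'' clause is necessary. All of the content lies in the converse: realising formal data by honest Lagrangian immersions, and deforming one Lagrangian immersion into another through Lagrangian immersions whenever their frame maps are homotopic.

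For this I would set up the relation $\mathcal{R}\subset J^1(L,X)$ consisting of those $1$-jets $(p,q,A)$ with $A\colon T_pL\to T_qX$ injective and $A(T_pL)$ Lagrangian, so that holonomic sections of $\mathcal{R}$ are exactly Lagrangian immersions. The essential difficulty, and the reason the naive convex-integration theorem for open relations does not apply verbatim, is that $\mathcal{R}$ is \emph{closed}: the Lagrangian condition cuts out the closed submanifold $\Lambda^+(n)$ inside the space of injective jets. The way through is Gromov's observation that the sheaf of Lagrangian immersions is nonetheless microflexible and is invariant under the pseudogroup of local symplectomorphisms of $X$, which acts transitively on $X$ and on the Lagrangian Grassmann bundle $\Lambda(X)$ (by Darboux's theorem together with the transitivity of the linear symplectic group on Lagrangian $n$-planes). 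By Gromov's flexibility theorem for microflexible invariant sheaves, this yields the full parametric h-principle and establishes the converse.

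The remaining point to monitor, and where the hypothesis $[\phi^*\omega]=0$ is consumed, is the interplay between the auxiliary bundle map $F$ and the differential $d\phi$, which in the formal data are a priori unrelated: the flexibility process must deform $\phi$ so that $d\phi$ comes to lie in $\Lambda(X)$ while simultaneously arranging that the pulled-back form vanishes \emph{on the nose}, not merely up to an exact term, and the only global obstruction to this is the de Rham class $[\phi^*\omega]$. One can make the role of this class transparent by passing to an equivalent Legendrian problem: since $[\phi^*\omega]=0$ one may choose a global primitive $\beta\in\Omega^1(L)$ with $d\beta=\phi^*\omega$, and $\beta$ furnishes the extra fibre coordinate of a Legendrian lift, for which the governing differential relation is genuinely \emph{open} and the h-principle follows by convex integration in the fibre direction. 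I expect verifying microflexibility together with this cohomological bookkeeping to be the main obstacle; the transitivity of the symplectic pseudogroup and the identification of the formal data with equivariant frame maps are then comparatively routine.
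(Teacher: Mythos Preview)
Your outline is a faithful sketch of the standard proof of the Gromov--Lees h-principle, but note that the paper does not prove this statement at all: it is stated as a known theorem with attribution to Gromov and Lees, and is used as a black box thereafter. So there is no ``paper's own proof'' to compare against; the authors simply cite \cite{Gro} and \cite{Lee}.

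That said, your sketch is essentially the argument one finds in those references. The identification of $SO(n)$-equivariant frame maps with formal Lagrangian immersions is correct, as is the observation that the Lagrangian relation is closed rather than open, so that one must either invoke microflexibility together with invariance under the symplectic pseudogroup (Gromov's route) or pass to a Legendrian lift where the relation becomes open and ample (closer to Lees's presentation). Your account of where the hypothesis $[\phi^*\omega]=0$ enters---as the obstruction to choosing a global primitive $\beta$ with $d\beta=\phi^*\omega$, hence to lifting formally to the contactisation---is the right one. If you were to flesh this into a full proof you would need to verify microflexibility carefully (or, in the Legendrian formulation, check ampleness of the open relation), but as a high-level plan there is no gap.
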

Mostly we consider Lagrangian submanifolds in the standard symplectic vector space $\CC^n$. The tangent bundle of $\CC^n$ is canonically trivialised by translation of vectors to the origin. We write
\begin{equation}\label{eq-triv}P_x\colon T_x\CC^n\to T_0\CC^n\end{equation}
for this translation map. In par\-tic\-u\-lar we may iden\-ti\-fy ($SO(n)$-equi\-var\-iant\-ly) the Lagrangian Grassmann bundle $\Lambda(\CC^n)$ with the product
\[\Lambda(\CC^n)\cong\Lambda^+(n)\times\CC^n.\]
In this trivialisation we consider the Lagrangian Gauss and frame maps as maps
\[\OP{Fr}(f)\colon\OP{Fr}^+(L)\to U(n),\quad\Lambda(f)\colon L\to\Lambda^+(n).\]
Note that an $SO(n)$-equivariant bundle map $\OP{Fr}^+(L)\to U(n)$ is a section of the associated $U(n)$-bundle, which is the complexified frame bundle $\OP{Fr}_{\CC}(L)$, a principal $U(n)$-bundle. This means that $L$ admits a Lagrangian immersion in $\CC^n$ if and only if its complexified tangent bundle is trivial and that two Lagrangian frame maps are $SO(n)$-equivariantly homotopic if and only if the corresponding trivialisations of the complexified tangent bundle are homotopic. The difference between two trivialisations of the complexified tangent bundle comprises a map $L\to U(n)$ and hence $SO(n)$-equivariant homotopy classes of Lagrangian frame map correspond (non-canonically) one-to-one with homotopy classes of maps $L\to U(n)$.

It is well-known that $H^1(\Lambda^+(n);\ZZ)\cong\ZZ$ and that this cohomology group is generated by the {\em Maslov class} $\mu$ \cite{Arn}.
\begin{dfn}\label{dfn-monotone}
Let $\lambda_0$ be the standard Liouville form
\[\lambda_0=\sum_{k=1}^nx_kdy_k\]
on $\CC^n$ and $\omega_0=d\lambda_0$ the standard symplectic 2-form. Let $f\colon L\looparrowright\CC^n$ be a Lagrangian immersion. The {\em symplectic area class of $f$} is the cohomology class
\[a(f):=[f^*\lambda_0]\in H^1(L;\RR).\]
The {\em Maslov class of $f$} is the cohomology class
\[\mu(f):=f^*\mu\in H^1(L;\RR).\]
We say that $f$ is $K$-monotone if
\[a(f)=K\mu(f)\]
for some $K>0$.
\end{dfn}
More generally if $(X,\omega)$ is a symplectic manifold and $f\colon L\to X$ a Lagrangian submanifold, there is an area homomorphism
\[a(f)\colon H_2(X,L;\ZZ)\to\RR\]
obtained by integrating $\omega$ over relative chains.

Monotone Lagrangians provide a particularly convenient setting for doing Lagrangian Floer theory \cite{Oh,BC}. This is because the area of a disc controls its Maslov index which controls the expected dimension of the moduli space and hence bubbling phenomena which reduce area also reduce expected dimension, so the boundary of a moduli space will have smaller expected dimension than the moduli space itself. There are good reasons to study monotone Lagrangians in their own right.
\begin{thm}[{\cite[Theorem D]{EvaKed}}]
In the Gromov-Lees h-principle one can require that all Lagrangian immersions are monotone.
\end{thm}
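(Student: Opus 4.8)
The plan is to run the Gromov--Lees h-principle (Theorem~\ref{thm-gro-lee}) unchanged and then correct the resulting immersions to be monotone by a Lagrangian regular homotopy that is invisible to the formal data. The crucial point is the contrast between the two classes in Definition~\ref{dfn-monotone}: the Maslov class $\mu(f)=f^*\mu\in H^1(L;\RR)$ depends only on the Lagrangian Gauss map, hence only on the $SO(n)$-equivariant homotopy class of $\OP{Fr}(f)$, and is a formal invariant fixed by $\Phi$; by contrast the area class $a(f)=[f^*\lambda_0]$ is \emph{not} a formal invariant and may be altered freely within a fixed formal class. I would therefore first isolate the flexibility statement: given $f\colon L\looparrowright\CC^n$ and any class $c\in H^1(L;\RR)$, there is a Lagrangian regular homotopy from $f$ to an immersion $f'$ with $a(f')=c$. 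Since any Lagrangian regular homotopy preserves $\OP{Fr}(f)$ up to $SO(n)$-equivariant homotopy, $f'$ lies in the same formal class as $f$. Granting this, existence is immediate: apply Theorem~\ref{thm-gro-lee} to realise $\Phi$ by some $f$ (the hypothesis $[\phi^*\omega_0]=0$ being automatic as $\omega_0$ is exact), then regular-homotope until $a(f')=K\mu(f)$ for a chosen $K>0$, which is exactly monotonicity.

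To prove flexibility I would first record the variation of the area class along a regular homotopy $f_t$ generated by $V_t$: writing $\mathcal L_{V_t}\lambda_0=\iota_{V_t}\omega_0+d(\lambda_0(V_t))$ and integrating over a loop gives $\frac{d}{dt}a(f_t)=[\alpha_t]$ with $\alpha_t=f_t^*(\iota_{V_t}\omega_0)$, a closed $1$-form recording the flux of the normal part of $V_t$; the Lagrangian condition is precisely what forces $\alpha_t$ to be closed, and for immersions it is otherwise unconstrained. To realise a prescribed shift of any size I would use an explicit local model rather than a Weinstein-neighbourhood graph, which only accommodates small shifts. Choose loops representing a basis of $H_1(L)$ and arrange them to run through disjoint arcs on which $f$ is embedded. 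Near such an arc a Weinstein chart identifies $f(L)$ with $\RR^n\subset\CC^n$ and the arc with a segment of $\RR e_1$. I would then modify $f$ only in the complex line $\CC e_1$, keeping the transverse directions flat, replacing the straight segment by a figure-eight-decorated arc with the same endpoint tangent directions and net rotation zero. By the Whitney--Graustein theorem for arcs this is a regular homotopy, hence a Lagrangian regular homotopy preserving the formal class, and by Stokes' theorem it changes $\int_\gamma f^*\lambda_0$ by the net symplectic area enclosed by the two arcs, an arbitrary real number. Performing these modifications independently on the disjoint arcs realises any target class $c$.

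Finally I would upgrade to the parametric version required to run the homotopy half of the h-principle inside the monotone class. Given monotone $f_0,f_1$ in the same formal class, first match their monotonicity constants by the scaling $z\mapsto cz$ of $\CC^n$, which multiplies $a$ by $c^2$, fixes $\mu$, and so moves through monotone immersions; then apply Theorem~\ref{thm-gro-lee} to obtain a (possibly non-monotone) regular homotopy $f_t$ with common constant $K$ at the ends. Since $a(f_t)$ varies continuously and equals $K\mu$ at $t=0,1$, the correction class $K\mu-a(f_t)$ is a continuous path in $H^1(L;\RR)$ vanishing at the endpoints, and I would apply the figure-eight construction in a $t$-family to cancel it, producing a monotone regular homotopy. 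I expect the main obstacle to be exactly this parametric step: one must choose the embedded arcs supporting the correction continuously in $t$ and control the finitely many moments where such a choice degenerates, so that the local modifications assemble into a genuine regular homotopy. The pointwise construction and the flux computation are comparatively routine once the formal-versus-area dichotomy has been isolated.
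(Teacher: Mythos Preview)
The paper does not prove this theorem: it is stated as a citation to \cite[Theorem D]{EvaKed} and no argument is supplied in the present paper. There is therefore nothing here to compare your proposal against.

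For what it is worth, your outline is a reasonable strategy and is close in spirit to how such results are typically established: run Gromov--Lees to realise the formal data, then exploit the fact that the Maslov class is a formal invariant while the area class is not, and correct the latter by local Lagrangian regular homotopies (your figure-eight modifications along a basis of $H_1$). The flux computation $\tfrac{d}{dt}a(f_t)=[f_t^*(\iota_{V_t}\omega_0)]$ is correct, and the one-dimensional Whitney--Graustein move in a single complex line is a clean way to change area without touching the Gauss map. The genuine work, as you yourself identify, is in the parametric version: making the supporting arcs and the size of the corrections depend continuously on the homotopy parameter while keeping the family an honest regular homotopy. None of this can be checked against the paper at hand, since the proof lives in \cite{EvaKed}.
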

Restrictions on embeddings of monotone Lagrangians (as opposed to immersions) are therefore truly rigidity theorems.

Monotone Lagrangians in $\CC^n$ arise naturally in the Lagrangian mean curvature flow as self-similarly contracting solutions \cite{GroSchSmoZeh}. They also exhibit special properties that are not shared by a general Lagrangian submanifold. For instance, orientable monotone Lagrangians in $\CC^3$ are all products $S^1\times\Sigma_g$ {\cite[Theorem B]{EvaKed}}; counterexamples to the nonmonotone version of this statement can be constructed by applying Polterovich connect-sum to remove double points of Lagrangian immersions obtained by an h-principle.

\section{Context}\label{sect-context}

The current techniques for understanding knottedness or linking of Lagrangian submanifolds fall into several categories:
\subsection{Luttinger surgery}
The papers \cite{Bor,EliPol,Lut} use Luttinger surgery on a hypothetical Lagrangian submanifold with specified self-linking or knotting properties to produce an impossible symplectic manifold. These have the drawback that one must study Lagrangians diffeomorphic to $S^1\times S^k$, $k=1,3,7$, for which one can define Luttinger surgery.
\begin{thm}[Luttinger \cite{Lut}]
There exist isotopy classes of embedded tori in $\CC^2$ which do not contain Lagrangian embeddings.
\end{thm}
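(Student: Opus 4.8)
The strategy, following Luttinger, is to assume the conclusion fails and derive a contradiction by surgery. Fix a smooth isotopy class of embedded torus $T\subset\CC^2$ carrying a prescribed framing/knotting invariant, and suppose for contradiction that this class contains a Lagrangian embedding $L$. The first ingredient is the Weinstein neighbourhood theorem: a Lagrangian torus has a neighbourhood symplectomorphic to a neighbourhood of the zero section in $T^*T^2\cong T^2\times\RR^2$. Inside such a model one cuts out a copy of $T^2\times D^2$ and reglues it by an area-preserving fibrewise map performing a Dehn twist in the $D^2$-factor along one of the circle directions of $T^2$. This operation — \emph{Luttinger surgery} — produces a genuinely new symplectic manifold, the essential point being that the regluing map can be arranged to respect the symplectic form.

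To promote this local modification into a global obstruction I would first place $L$ inside a closed symplectic $4$-manifold. Using Gromov's symplectic ball-embedding results, a large Darboux ball $B\subset\CC^2$ containing $L$ embeds symplectically into a closed symplectic manifold such as $S^2\times S^2$ or $\CC\mathbf{P}^2$ (after rescaling the target to have sufficient volume), so that $L$ becomes a Lagrangian torus in a closed $(M,\omega)$. Performing Luttinger surgery along $L$ inside $M$ then yields a closed symplectic manifold $(M',\omega')$. The smooth topology of $M'$ is computable from that of $M$ together with the surgery data by a Mayer--Vietoris argument, since $L$ is smoothly standard away from its prescribed knotting; the surgery alters $\pi_1$ and the homology in a manner governed entirely by the framing invariant that distinguishes the isotopy class of $T$.

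The contradiction then comes from gauge theory. By Taubes' nonvanishing theorem, a closed symplectic $4$-manifold with $b_2^+>1$ has nonvanishing Seiberg--Witten invariant on its canonical class. By contrast, the surgery data can be chosen so that $M'$ is smoothly of a type forced to have vanishing Seiberg--Witten invariants — for instance a manifold splitting as a connected sum with $b_2^+\geq1$ on each side, or one admitting a metric of positive scalar curvature. These two conclusions are incompatible, so no Lagrangian representative $L$ can lie in the chosen isotopy class, which proves the theorem.

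I expect the main obstacle to be the geometric-combinatorial heart of the construction: exhibiting an explicit smooth torus $T$ whose framing/knotting invariant is \emph{not} realised by any Lagrangian, and verifying that one (or iterated) Luttinger surgeries along a hypothetical Lagrangian representative genuinely land on an ``impossible'' symplectic manifold of the required type. Subsidiary care is needed to ensure each step — the Weinstein model, the symplectic ball embedding, and the fibrewise regluing — is performed symplectically, and that the smooth diffeomorphism type of $M'$ is identified correctly, since the entire contradiction rests on this identification.
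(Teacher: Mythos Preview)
The paper does not actually prove this theorem: it is quoted as background from Luttinger's paper \cite{Lut} in the survey section on context, with no argument given. So there is no ``paper's own proof'' to compare against here.

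That said, your outline is in the right spirit but diverges from Luttinger's actual argument at the contradiction step. Luttinger does not pass to a closed $4$-manifold and invoke Seiberg--Witten theory. Instead he works directly in $\CC^2$: the point is that Luttinger surgery on a Lagrangian torus in $\CC^2$ produces a new symplectic manifold which is still standard at infinity, and by Gromov's theorem such a manifold must be symplectomorphic (in particular diffeomorphic) to standard $\CC^2$. On the other hand, for tori obtained by spinning a knot $K\subset S^3$, the smooth effect of Luttinger surgery is governed by Dehn surgery on $K$; for suitable nontrivial knots this produces a manifold that is not diffeomorphic to $\RR^4$, which is the contradiction.

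Your proposed route via embedding into $\CC\mathbf{P}^2$ or $S^2\times S^2$ and appealing to Taubes has a genuine gap as written: you assert that ``the surgery data can be chosen so that $M'$ is smoothly of a type forced to have vanishing Seiberg--Witten invariants,'' but you give no mechanism for this. Luttinger surgery on a nullhomologous torus typically does \emph{not} produce a connected sum or a positive-scalar-curvature manifold; indeed such surgeries are now a standard source of exotic symplectic $4$-manifolds with \emph{nonvanishing} invariants. Without a concrete identification of $M'$ and a reason its invariants vanish, the argument does not close. The Gromov-based contradiction in $\CC^2$ avoids this entirely.
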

\begin{thm}[Luttinger \cite{Lut}, Eliashberg-Polterovich \cite{EliPol}, Borrelli \cite{Bor}]\label{thm-haef-hir-lag}
Suppose $k\in\{1,3,7\}$. If $S^1\times S^k\cong L\subset\CC^{1+k}$ is a Lagrangian submanifold, $W\colon D^*_{\rho,g}L\to\CC^{1+k}$ is a Weinstein neighbourhood, $\sigma\colon L\to S^1$ is the projection to the first factor and $L'$ is the image under $W$ of the graph of the 1-form $\rho'd\sigma$ for some $\rho'<\rho$ then $L'$ does not link $L$.
\end{thm}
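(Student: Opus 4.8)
The plan is to show that $L'$ is nullhomologous in $\CC^{1+k}\setminus L$, which is exactly the assertion that $L'$ does not link $L$. Write $n=1+k$ and note that, since $L$ is compact, deleting a point at infinity does not affect homology in the relevant degree, so $H_n(\CC^n\setminus L;\ZZ)\cong H_n(S^{2n}\setminus L;\ZZ)$. By Alexander duality the latter is isomorphic to $H^{n-1}(L;\ZZ)=H^k(S^1\times S^k;\ZZ)$, and the isomorphism is realised by linking numbers: the class $[L']$ vanishes if and only if $\OP{lk}(L',Z)=0$ for every $Z$ in a basis of $H_{n-1}(L;\ZZ)$. These groups are torsion-free here, so no subtlety arises from torsion. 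By the K\"unneth formula a basis of $H_{n-1}(L;\ZZ)$ is given by the class $b=\sigma^{-1}(\theta_0)\cong S^k$ (a regular fibre of $\sigma$) when $k\geq 2$, and by $b$ together with $a=S^1\times\{\mathrm{pt}\}$ when $k=1$. Thus everything reduces to computing these finitely many linking numbers.

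Next I would set up the geometry using the Weinstein neighbourhood $W$. It identifies a neighbourhood of $L$ with the disc cotangent bundle $D^*_{\rho,g}L$, under which the normal bundle of $L$ becomes $T^*L$ (via $J$ and the metric) and $L'$ becomes the graph of the covector field $\rho'\,d\sigma$. Because $d\sigma=\sigma^*(d\theta)$ is nowhere vanishing, $L'$ is disjoint from $L$ and sits in a thin shell on one side of $L$, displaced in the normal direction dual to $d\sigma$. Since $\chi(S^1\times S^k)=0$ the Euler class of $T^*L$ vanishes, which is what makes such a nonvanishing normal pushoff exist and behave well homologically. I would then compute each linking number as an intersection number $\OP{lk}(L',Z)=L'\cdot\widehat Z$, where $\widehat Z$ is a chain in $\CC^n$ with $\partial\widehat Z=Z$.

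The heart of the argument is choosing the Seifert chains $\widehat b$ (and $\widehat a$) so as to avoid $L'$. The point is that $b=\sigma^{-1}(\theta_0)$ is a level set of $\sigma$, so $d\sigma$ is tangentially trivial along $b$ and the pushoff displaces $L'$ in a direction transverse to the slice $\{\sigma=\theta_0\}$; one may therefore cap off $b$ by a disc that leaves $L$ into $\CC^n$ on the side opposite the shell containing $L'$, and which is otherwise free of $L'$. This is completely transparent in the product model $L=\{|z_1|=1\}\times\{|z_2|=1\}\subset\CC^2$, where the $d\sigma$-pushoff is the radial translate $\{|z_1|=R\}\times\{|z_2|=1\}$ with $R\neq 1$, and the coordinate discs $\{z_1=e^{i\theta_0}\}\times\{|z_2|\leq 1\}$ and $\{|z_1|\leq 1\}\times\{\mathrm{pt}\}$ manifestly miss $L'$; hence both linking numbers vanish. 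For a general Lagrangian embedding the same one-sidedness of the pushoff should allow the capping disc to be steered clear of $L'$.

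The main obstacle is precisely this last step: for an arbitrary Lagrangian embedding one must construct the capping chain globally, controlling it both in the shell near $L$ and in the part that wanders far from $L$. Homologically this is the statement that, via Mayer--Vietoris for $\CC^n=(\text{tube})\cup(\CC^n\setminus L)$, the class $[L']$ is the image of the section class $[\widehat{d\sigma}]\in H_n(S^*L)$; because $\chi(L)=0$ the Gysin sequence splits as $H_n(S^*L)\cong H_n(L)\oplus H_1(L)$, the $H_n(L)$-summand mapping to zero in the complement, and one must show that the $H_1(L)$-component of the particular section $\widehat{d\sigma}$ maps to zero under the embedding-dependent map to $H_n(\CC^n\setminus L)$. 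I expect the resolution to be that $\widehat{d\sigma}$ extends to a nonvanishing normal field over a filling of $L$, so that $L'$ bounds the corresponding pushoff of that filling; pinning this down is the crux, and is where the special nature of the direction $d\sigma$ --- as opposed to an arbitrary pushoff --- must be used.
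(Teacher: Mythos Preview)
Your proposal has a genuine gap, and you have correctly identified it yourself: the ``crux'' at the end is not a technicality but the entire content of the theorem. Everything up to that point is soft topology that applies equally well to an arbitrary smooth embedding of $S^1\times S^k$ in $\CC^{k+1}$, and for such embeddings the conclusion is \emph{false}. Indeed, by Haefliger--Hirsch theory the smooth isotopy classes of embeddings $S^1\times S^k\hookrightarrow\CC^{k+1}$ are distinguished precisely by whether the $d\sigma$-pushoff links, so there exist smooth (non-Lagrangian) embeddings for which your $L'$ \emph{does} link $L$. Hence no argument that never invokes the Lagrangian condition can close the gap. Your Mayer--Vietoris analysis is correct in identifying the embedding-dependent map $H_1(L)\to H_n(\CC^n\setminus L)$ as the obstruction, but showing it kills the class of $\widehat{d\sigma}$ is exactly the hard symplectic content.

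The paper does not give its own proof of this statement; it is quoted from \cite{Lut,EliPol,Bor}, and the method there is entirely different from yours. One assumes $L'$ links $L$ and performs Luttinger surgery along $L$ in the direction determined by $\sigma$: this produces a new symplectic manifold which one then shows cannot exist (for instance, it would be an exotic symplectic $\RR^{2n}$ or violate a Gromov-type theorem). The restriction $k\in\{1,3,7\}$ is there precisely because Luttinger surgery requires $S^k$ to be parallelisable. This is a global rigidity argument using pseudoholomorphic curves (or a surrogate), not a local linking-number computation. Your product-model calculation is fine but only confirms the result for one specific Lagrangian; it cannot be promoted to the general case without a genuinely symplectic ingredient such as Luttinger surgery or, as in the present paper's own results, moduli of holomorphic discs.
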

In the language of Section \ref{sect-HaeHir} below, this theorem identifies the Haefliger-Hirsch field of the Lagrangian embedding and this is enough to determine the smooth knot type when the dimension is sufficiently large. This argument, due to Borrelli, is reproduced in Section \ref{sect-comp-prf} below. Using it, Borrelli observes:
\begin{cor}[Borrelli \cite{Bor}]\label{cor-borrelli}
Suppose $k\in\{3,7\}$. The smooth isotopy class of a Lagrangian submanifold $S^1\times S^k\subset\CC^{k+1}$ is determined by the homotopy class of the Lagrangian frame map. In other words, two Lagrangian $S^1\times S^k\subset\CC^{k+1}$ with homotopic Lagrangian frame maps are not relatively knotted.
\end{cor}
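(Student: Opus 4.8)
The plan is to combine the Gromov-Lees h-principle with the Haefliger-Hirsch classification of embeddings in the double-dimensional metastable range, using Theorem \ref{thm-haef-hir-lag} to pin down the single remaining discrete invariant. Throughout write $n=k+1$, so that $L=S^1\times S^k$ is an $n$-manifold embedded in $\CC^n=\RR^{2n}$ with $n\in\{4,8\}$; in particular $n\geq 4$, which is exactly the dimension range in which Haefliger-Hirsch theory applies, while the restriction to $k\in\{3,7\}$ (rather than all $k\geq 3$) is forced by the hypothesis $k\in\{1,3,7\}$ of Theorem \ref{thm-haef-hir-lag}. I would split the smooth isotopy type of such an embedding into two pieces of data: the regular homotopy class of the underlying immersion, and a residual Haefliger-Hirsch linking invariant.

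First I would reduce the immersion-theoretic data to the Gauss map. Since $L$ is parallelisable, a choice of parallelisation upgrades the homotopy class of the Lagrangian Gauss map $\Lambda(f)\colon L\to\Lambda^+(n)$ to that of the Lagrangian frame map $\OP{Fr}(f)\colon\OP{Fr}^+(L)\to U(n)$, and by Theorem \ref{thm-gro-lee} this frame map controls the Lagrangian immersion up to regular homotopy. Forgetting the Lagrangian condition, the induced real orthonormal $n$-frame field is precisely the Smale-Hirsch datum classifying the underlying smooth immersion of $L$ into $\RR^{2n}$. Hence two Lagrangian embeddings of $S^1\times S^k$ with homotopic Lagrangian Gauss maps are regularly homotopic as smooth immersions, and the only thing left to prove is that they lie in the same isotopy class among embeddings inducing this immersion class.

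This is where Haefliger-Hirsch theory and Theorem \ref{thm-haef-hir-lag} enter. As reviewed in Section \ref{sect-HaeHir}, for a closed $n$-manifold embedded in $\RR^{2n}$ with $n\geq 4$ the set of smooth isotopy classes inducing a fixed regular homotopy class of immersion is controlled by a single secondary invariant of Haefliger-Hirsch type: a normal ``Haefliger-Hirsch field'' whose linking class distinguishes non-isotopic embeddings sharing the same immersion data, taking values here in $H_1(L;Y(n))$ with $Y(n)=\ZZ/2$ since $n=k+1$ is even. Theorem \ref{thm-haef-hir-lag} computes this invariant geometrically: the Lagrangian self-pushoff along the graph of $\rho'\,d\sigma$ inside a Weinstein neighbourhood does not link $L$, which identifies the Haefliger-Hirsch field of \emph{any} Lagrangian $L$ of this form with the $d\sigma$-direction. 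Because $d\sigma$ is the pullback of the angular form under the product projection $\sigma\colon S^1\times S^k\to S^1$, a topological datum independent of the embedding, and the Weinstein identification of the normal bundle with $T^*L$ is canonical up to the relevant homotopies, the resulting linking class depends only on the regular homotopy class of the immersion, hence only on the Gauss map.

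Putting the pieces together, two Lagrangian copies of $S^1\times S^k$ in $\CC^{k+1}$ with homotopic Lagrangian Gauss maps are regularly homotopic as smooth immersions and carry the same Haefliger-Hirsch invariant, so by the Haefliger-Hirsch classification they are smoothly isotopic, that is, not relatively knotted. The step I expect to be the main obstacle is the final one above: matching the geometric non-linking statement of Theorem \ref{thm-haef-hir-lag} with the homotopy-theoretic definition of the Haefliger-Hirsch field, and verifying that the homotopy class of the $d\sigma$-pushoff, and therefore the computed linking invariant, is genuinely a function of the Gauss map alone rather than of the particular Lagrangian embedding or of the chosen Weinstein neighbourhood. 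Everything else is either the cited h-principle or routine bookkeeping in the metastable range.
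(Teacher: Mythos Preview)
Your proposal is correct and takes essentially the same approach as the paper: use Theorem \ref{thm-haef-hir-lag} to identify $J(f_i)_*\nabla\sigma$ as the Haefliger-Hirsch field, then argue that this together with the frame map determines the Haefliger-Hirsch map and apply Theorem \ref{thm-HaeHir}. The step you flag as the main obstacle is exactly what the paper makes precise in Lemma \ref{lma-htpy-HaeHir}: given a homotopy $\mathcal{F}_t$ of Lagrangian frame maps, one writes down the explicit homotopy $s(\mathcal{F}_t)(e_1,\ldots,e_n)=(\mathcal{F}_t e_1,\ldots,\mathcal{F}_t e_n,\,J\sum_k a(e)_k\mathcal{F}_t e_k)$ of Haefliger-Hirsch maps, where $\nabla\sigma=\sum_k a(e)_k e_k$; this confirms that the Haefliger-Hirsch invariant is indeed a function of the frame map alone, independently of the particular embedding or Weinstein neighbourhood.
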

\subsection{Perturbing the symplectic form}
The papers \cite{EliPol4,HinIvr} perturb the symplectic structure to make a two-dimensional Lagrangian into a symplectic submanifold, choose an almost complex structure making this submanifold into a holomorphic curve and then perturb the almost complex structure to perform isotopies of the holomorphic curve. These techniques have the disadvantage that one must work in very special four-dimensional situations.
\begin{thm}[{\cite[Genus 0, 1]{EliPol4}}, {\cite[Genus >1]{HinIvr}}]
If $\Sigma$ is an orientable closed surface and $L\subset T^*\Sigma$ is a Lagrangian submanifold homologous to the zero section then $L$ is smoothly isotopic to the zero-section.
\end{thm}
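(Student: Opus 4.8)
The plan is to turn $L$ into a pseudoholomorphic curve and then move it, through holomorphic curves, into the standard position of the zero section $\Sigma_0$; this is exactly the strategy sketched in the previous paragraph of the text. First observe that since $T^*\Sigma$ deformation retracts onto $\Sigma_0$, the group $H_2(T^*\Sigma;\ZZ)$ is infinite cyclic, generated by $[\Sigma_0]$, so the hypothesis forces $[L]=[\Sigma_0]$ on the nose. The opening move is to deform the canonical symplectic form $\omega$ so that $L$ becomes a symplectic surface. Using a Weinstein neighbourhood $T^*L\hookrightarrow T^*\Sigma$ of $L$, pull back an area form of $L$ along $T^*L\to L$, multiply by a cutoff supported near the zero section of $T^*L$, and add a small multiple $t\eta$ of the result to $\omega$. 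For $t$ small the form $\omega_t=\omega+t\eta$ is still symplectic, agrees with $\omega$ outside a compact set, and restricts to a positive area form on $L$; since $\eta$ is exact in the relevant region the symplectic area of the class $[\Sigma_0]$ stays controlled and, in particular, a priori bounded.

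Next choose an $\omega_t$-compatible almost complex structure $J$ for which $L$ is $J$-holomorphic, and examine the moduli space $\mathcal M$ of $J$-holomorphic curves in the class $[\Sigma_0]$. Because $\omega_t=\omega$ near infinity, $T^*\Sigma$ remains convex there, so no closed curve can escape into the end and the symplectic area gives a uniform energy bound; Gromov compactness therefore applies. In real dimension four one now has the full force of intersection theory: positivity of intersections together with the adjunction formula (here $c_1(T^*\Sigma)=0$, so adjunction reads $2g_C-2=[\Sigma_0]^2=2g-2$) forces every somewhere-injective curve in the class to be embedded of genus exactly $g$, while the self-intersection number $2g-2$ governs how two such curves can meet. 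Automatic transversality of Hofer--Lizan--Sikorav then makes $\mathcal M$ a smooth manifold of the expected dimension, with no need to perturb $J$ away from $L$.

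With $\mathcal M$ understood, the isotopy is produced by moving $L$ through the family. For $g\le 1$ the self-intersection is non-positive and the curves are rigid: when $g=0$ there is a unique curve in the class, and when $g=1$ the curves are pairwise disjoint and foliate $T^*\Sigma$ (after a fibrewise compactification of the $\RR^2$-fibres); either way $L$ occurs as such a curve. Deforming $J$ along a generic path $J_s$ to a standard integrable structure $J_{\mathrm{std}}$ for which $\Sigma_0$ is holomorphic, the spaces $\mathcal M_s$ assemble into a one-parameter family, and the compactness and adjunction bounds above show this family carries no nodal or multiply-covered limits, so it is a smooth cobordism of embedded curves. At the $J_{\mathrm{std}}$ end the representatives of $[\Sigma_0]$ are explicit---the rigid curve or the leaves of the standard foliation when $g\le 1$, and graphs of holomorphic $1$-forms, which scale to the zero section, in general---so that end is manifestly smoothly isotopic to $\Sigma_0$. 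Reading the cobordism as an ambient smooth isotopy of embedded surfaces then carries $L$ to $\Sigma_0$.

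The genuinely hard part is the compactness and degeneration analysis: ruling out that a sequence of holomorphic curves in the class $[\Sigma_0]$ bubbles off spheres, breaks into a nodal configuration, or develops multiple covers as $J$ is deformed. Its severity is measured by the self-intersection $[\Sigma_0]^2=2g-2$. For $g=0,1$ this is non-positive and the adjunction inequality is rigid enough to exclude all such degenerations directly, which is the content of the Eliashberg--Polterovich argument \cite{EliPol4}. For $g>1$ the positive self-intersection permits far more intricate nodal limits, and keeping the curves embedded and the family a genuine isotopy requires the considerably more delicate analysis of Hind--Ivrii \cite{HinIvr}. Everything else---perturbing $\omega$ and deforming $J$---is soft by comparison.
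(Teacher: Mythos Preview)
The paper does not actually prove this theorem: it is quoted in the survey Section~\ref{sect-context} as a result of Eliashberg--Polterovich \cite{EliPol4} and Hind--Ivrii \cite{HinIvr}, with only the one-sentence description of the method (``perturb the symplectic structure to make a two-dimensional Lagrangian into a symplectic submanifold, choose an almost complex structure making this submanifold into a holomorphic curve and then perturb the almost complex structure to perform isotopies of the holomorphic curve''). So there is no proof in the paper to compare your proposal against; your sketch is an elaboration of exactly that one-sentence summary, and in that sense it is faithful to the paper.

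As a sketch of the cited arguments your outline is broadly accurate, but one technical point deserves a flag. The appeal to Hofer--Lizan--Sikorav automatic transversality is not justified as stated: that result is for immersed spheres (with a positivity condition on $c_1$), not for embedded curves of arbitrary genus. For $g=0$ you are fine; for $g=1$ and especially $g>1$ one does not get regularity of the moduli space for free, and this is part of what makes the higher-genus case genuinely hard. Your final paragraph correctly locates the difficulty in the compactness and degeneration analysis, but you should not claim that transversality comes automatically along the way---in the actual proofs this is handled with more care (and in the Hind--Ivrii setting, with considerably more work). Apart from this, and some inevitable vagueness about compactifications in the noncompact target, the proposal is a fair high-level account of what the cited papers do.
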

\subsection{Filling with discs}
The paper \cite{EliPol3} proves a `local unknottedness theorem' in four dimensions, showing that all Lagrangian planes asymptotic to a linear Lagrangian plane are unknotted.
\begin{thm}
A Lagrangian embedding $\RR^2\to\CC^2$ which agrees with the embedding of a linear Lagrangian plane $\Pi$ outside a compact set is isotopic (via an ambient compactly supported Hamiltonian isotopy) to $\Pi$.
\end{thm}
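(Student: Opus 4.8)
The plan is to realise the Lagrangian plane as a ``multisection'' of a holomorphic disc fibration and then to straighten that fibration to the linear model. Normalise so that $\Pi=\RR^2=\{\OP{Im}z_1=\OP{Im}z_2=0\}$ and $L$ coincides with $\Pi$ outside a ball $B_R$. I would first compactify: identify $\CC^2$ symplectically with $S^2\times S^2$ minus a divisor $D_\infty$ at infinity, arranged so that the standard end of $\Pi$ closes up to a monotone product torus $T_0$ and $L$ closes up to a Lagrangian torus $\hat L$ which coincides with $T_0$ near $D_\infty$. Since the interior modification happens inside the exact region $\CC^2$, it contributes no area or Maslov to any disc class, so $\hat L$ inherits monotonicity from $T_0$. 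Under this identification the conclusion---that $L$ is carried to $\Pi$ by a compactly supported Hamiltonian isotopy---becomes the statement that $\hat L$ is isotopic to $T_0$ through a Hamiltonian isotopy supported away from $D_\infty$.

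For the model torus $T_0$ the structure is explicit: $S^2\times S^2$ is foliated by the holomorphic fibre spheres, and each fibre meeting $T_0$ is cut by it into a pair of Maslov-$2$ holomorphic discs, whose boundaries foliate $T_0$. This is the ``filling by discs'' of $\Pi$, and the core of the argument is to reproduce it for $\hat L$. Fixing a tame almost complex structure $J$ that is standard near $D_\infty$, I would study the moduli space $\mathcal M$ of $J$-holomorphic discs with boundary on $\hat L$ in the half-fibre class. This class is topologically fixed, so its symplectic area is an a priori constant and Gromov compactness applies with a uniform energy bound; since $\hat L$ is a monotone torus its minimal Maslov number is $2$, so for generic $J$ the discs of $\mathcal M$ are regular and the standard index count rules out disc bubbling, while the fixed fibre area constrains sphere bubbling. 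Near $D_\infty$ the equation is the linear one, so the ends of $\mathcal M$ are pinned to the standard model.

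Granting that $\mathcal M$ is a smooth family of embedded discs sweeping out $S^2\times S^2$ and restricting to the standard fibres near $D_\infty$, the isotopy is read off as follows. Evaluation of the disc boundaries gives a diffeomorphism from the parameter space of $\mathcal M$ onto $\hat L$, while the linear model gives the analogous diffeomorphism onto $T_0$; composing these and extending over the fibration produces a diffeomorphism of $S^2\times S^2$, supported away from $D_\infty$, that carries $\hat L$ to $T_0$. Because corresponding discs are $J$-holomorphic, hence symplectic, and of equal area, this diffeomorphism can be upgraded to a symplectomorphism. Since it is supported inside a ball in $\CC^2$, where $H^1_c(\CC^2;\RR)=0$, the resulting compactly supported symplectic isotopy has vanishing flux and is therefore generated by a compactly supported Hamiltonian, moving $L$ onto $\Pi$ as required.

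The main obstacle is showing that the disc filling of $\hat L$ is \emph{complete and embedded}: that the discs of $\mathcal M$ remain pairwise disjoint, sweep out all of $S^2\times S^2$, and degenerate to the standard picture at infinity without energy leaking along the end or escaping across $D_\infty$. This is exactly where the low dimension is used. In a four-manifold, positivity of intersections for distinct $J$-holomorphic discs in the fibre class forces them to be disjoint and each to be embedded, so the family is automatically a foliation; in higher dimensions this self-organising mechanism is unavailable, which is precisely why the theorem is special to $\CC^2$. Pinning down the asymptotics of the discs near $D_\infty$ and excluding sphere bubbling along the divisor is the most delicate analytic point.
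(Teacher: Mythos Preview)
This theorem is not proved in the paper; it is quoted from Eliashberg--Polterovich, and the paper only gives a one-paragraph summary of their technique. That summary describes a genuinely different approach from yours: they fill with holomorphic discs to produce a \emph{codimension-one hypersurface} in $\CC^2$ containing $L$, control its characteristic foliation, and then construct the isotopy of $L$ to $\Pi$ explicitly inside that hypersurface, working in $\CC^2$ throughout. Your route instead compactifies to $S^2\times S^2$, converts the plane into a monotone Lagrangian torus, and uses a global foliation by holomorphic discs to build a symplectomorphism carrying $\hat L$ to the Clifford torus---closer in spirit to later torus-unknottedness arguments in rational ruled surfaces than to the original proof.

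Your sketch is plausible but has two real gaps. First, $\CC^2$ with $\omega_0$ is not symplectomorphic to $S^2\times S^2\setminus D_\infty$ (infinite versus finite volume); what you actually need is to embed a large ball containing the modification into $S^2\times S^2$ so that the relevant part of $\Pi$ lands on the Clifford torus, and then to arrange that the isotopy you produce is supported in the image of that ball. ``Supported away from $D_\infty$'' is strictly weaker and does not by itself transplant back to a \emph{compactly supported} Hamiltonian isotopy of $\CC^2$. Second, the step upgrading the diffeomorphism built from the two disc foliations to a symplectomorphism is asserted (``corresponding discs are $J$-holomorphic, hence symplectic, and of equal area'') but not argued; this typically requires a fibrewise area-matching together with a Moser argument along a path of cohomologous forms, and making that path compactly supported is exactly where the difficulty concentrates. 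The Eliashberg--Polterovich hypersurface approach sidesteps both issues by never leaving $\CC^2$.
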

The technique of proof is to construct an ambient submanifold of codimension one containing the Lagrangian via the method of filling by holomorphic discs. The submanifold thus constructed is also required to satisfy certain conditions on its characteristic foliation. An isotopy of the Lagrangian with a plane is then constructed explicitly.
\subsection{Holomorphic foliations}
Hind's papers \cite{Hin,Hin2} use symplectic field theory and neck-stretching arguments to put Lagrangian spheres into a special position with respect to a pseudoholomorphic foliation. Neck-stre\-tch\-ing arguments were also used in \cite{BorLiWW,Eva,LiWW} to disjoin a Lagrangian sphere from a fixed collection of symplectic submanifolds and reduce various knotting problems to those studied by Hind or to problems on connectivity of spaces of symplectic ball packings. When these methods work they produce very strong results, but the drawback is that they require foliations by holomorphic curves, and such foliations are only well-behaved in dimension four. They also work best for Lagrangian spheres and it has proved difficult to approach Lagrangian tori this way.

\subsection{This paper}
Our methods are completely different in character and rely on the existence of many holomorphic discs with boundary on one of the Lagrangians to produce a nullhomology of that Lagrangian. A neck-stretching argument is used to ensure that this nullhomology can be made disjoint from the other Lagrangian.

Related work on a Floer theoretic approach to homological inclusion maps for Lagrangian submanifolds is discussed in \cite{Alb,AlbErr}.

\part{Proofs}

\section{Holomorphic discs which avoid a Lagrangian}
If $(X,L)$ is a pair consisting of a symp\-lec\-tic amb\-ient ma\-ni\-fold and a Lag\-ran\-gian submanifold, we denote by $\OP{Hur}_*(X,L;\ZZ)$ the image of the Hurewicz homomorphism in relative homology. Let $D^*_{\rho,g}L$ denote the radius $\rho$ closed disc subbundle of the cotangent bundle $T^*L$ for a metric $g$ and $S^*_{\rho,g}L$ its boundary.

\begin{dfn}
Define the {\em infimal disc area} of a Lagrangian embedding $f\colon L\to X$ to be
\begin{equation}\label{eqn-area-inf}A(f)=\inf\left\{a(f)(\beta)\ \middle|\ \beta\in H_2(X,L;\ZZ),\ a(f)(\beta)>0\right\}\end{equation}
where $a(f)\colon H_2(X,L;\ZZ)\to\RR$ is the area homomorphism.
\end{dfn}
\begin{thm}\label{thm-avoid}
Let $L_1$ and $L_2$ be manifolds and suppose that $L_2$ admits a metric $g$ with no contractible geodesics. Suppose that
\begin{align*}
f_1&\colon L_1\to X\\
f_2&\colon L_2\to X
\end{align*}
are Lagrangian embeddings into a symplectic manifold $(X,\omega)$ (either closed or convex at infinity). Suppose that there exists a class $\beta\in \OP{Hur}_2(X,L_1;\ZZ)$ with $a(f_1)(\beta)\leq A(f_2)$.

Then there exists an almost complex structure $J$ on $X$ with the following property. All genus zero $J$-holomorphic curves representing the class $\beta$, that is to say spheres passing through $L_1$ or discs with boundary on $L_1$, are disjoint from $L_2$. Moreover, if $a(f_1)(\beta)=A(f_1)$ then $J$ can be chosen to be regular for the moduli problem of finding discs in the class $\beta$.
\end{thm}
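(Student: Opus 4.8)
The plan is to build $J$ by stretching the neck around a Weinstein neighbourhood of $L_2$ and then to use SFT compactness to analyse any disc in class $\beta$ that meets $L_2$; the hypothesis that $g$ has no contractible geodesics is the feature that rules such discs out. First I would fix a Weinstein embedding $W\colon D^*_{\rho,g}L_2\hookrightarrow X$ for the given metric $g$, shrinking $\rho$ so that $W(D^*_{\rho,g}L_2)$ is disjoint from $L_1$. On the contact boundary $S^*_{\rho,g}L_2$ the Reeb flow is a reparametrisation of the (co)geodesic flow, so the assumption on $g$ translates into the statement that there are no contractible closed Reeb orbits. I would then take almost complex structures $J_T$ that are cylindrical on a collar of $S^*_{\rho,g}L_2$ and compatible with the Liouville data on the filling, and stretch the neck as $T\to\infty$. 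For the final regularity clause, observe that $\beta$ cannot be written as $d\beta'$ with $d>1$ and $a(f_1)(\beta')>0$, since then $a(f_1)(\beta')=A(f_1)/d$ would be a smaller positive value of $a(f_1)$; hence discs in class $\beta$ are somewhere injective and a generic choice of $J$ within the neck-stretched family makes the class-$\beta$ moduli space regular.

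Next I would record that no bubbling can occur. Because $A(f_1)=a(f_1)(\beta)$ is the infimum of the positive values of $a(f_1)$, a $J$-disc in class $\beta$ cannot split off a disc or sphere bubble: any such bubble carries strictly positive $\omega$-area, leaving a complementary disc whose class still lies in $\OP{Hur}_2(X,L_1;\ZZ)$ but has strictly smaller positive area, contradicting minimality. Thus the only degeneration available to a sequence of class-$\beta$ discs is the breaking forced by the neck-stretching itself.

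Now suppose, for contradiction, that for arbitrarily large $T$ there is a $J_T$-disc $D_T$ in class $\beta$ with $D_T\cap L_2\neq\emptyset$. By SFT compactness I would extract a limiting holomorphic building consisting of a top level in $X\setminus W(D^*_{\rho,g}L_2)$ carrying the boundary on $L_1$, some symplectization levels over $S^*_{\rho,g}L_2$, and a bottom level inside the completed filling $T^*L_2$. The intersection points of $D_T$ with $L_2$ must be absorbed by the bottom level, so this level contains a nonconstant finite-energy punctured sphere $v$ which meets the zero section. It has no boundary, since the boundary of each $D_T$ lies on $L_1$ outside the neck, and its punctures are positively asymptotic to Reeb orbits, that is, to closed geodesics of $(L_2,g)$. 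By Stokes' theorem and the vanishing of the Liouville form $\lambda$ on the zero section, the $\omega$-area of $v$ equals the total action of these asymptotic orbits, which is therefore at most $A(f_1)\le A(f_2)$.

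The heart of the matter is that $v$ cannot exist. If $v$ had a single puncture it would be a plane asymptotic to one closed geodesic $\gamma$; projecting to $L_2$ along $\pi\colon T^*L_2\to L_2$, the image of a large circle is freely homotopic to $\gamma$ and is null-homotoped by the image of the enclosed disc, forcing $\gamma$ to be contractible and contradicting the hypothesis on $g$. The main obstacle is that the bottom level need not be a single plane: it may be a multi-level building with several positive punctures, whose asymptotic geodesics satisfy only the weaker constraint that their classes multiply to the identity in $\pi_1(L_2)$. I expect the resolution to combine the non-contractibility of \emph{all} geodesics with the a priori action bound $\le A(f_2)$ obtained above, so as to exclude these configurations; and it is precisely here that I would anticipate dimension or minimal-Maslov hypotheses entering in the sharper versions of the statement, in order to rule out the pathological punctured curves that otherwise survive in the neck.
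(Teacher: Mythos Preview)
Your setup is correct and matches the paper exactly: Weinstein neighbourhood, neck-stretching sequence, SFT compactness, and the observation that Reeb orbits are (co)geodesics so none are contractible. The treatment of regularity (minimality of $\beta$ forces somewhere-injectivity, then approximate the neck-stretched $J$ by regular ones and use Gromov compactness) is also essentially what the paper does.

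The gap is in the analysis of the limit building. You focus on the \emph{bottom} piece $v\subset T^*L_2$ and then get stuck when $v$ has several positive punctures. The paper sidesteps this entirely by looking at a different component. Since the domain is a disc, the limit building is a tree of genus-zero pieces; any such tree with at least two vertices has a leaf that is a sphere with a single puncture, i.e.\ a finite-energy \emph{plane}. The no-contractible-geodesics hypothesis rules this plane out of $T^*L_2$ and out of every symplectisation level (in either place the plane would cap off its asymptotic orbit and, after projecting to $L_2$, give a nullhomotopy of a geodesic). Hence the plane lies in the \emph{outside} piece $\overline{V}\cong X\setminus L_2$. Compactifying its negative end along the asymptotic geodesic, one obtains a topological disc in $X$ with boundary on $L_2$; being $J$-holomorphic it has positive area, hence area at least $A(f_2)$. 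But there is a second component of $u_{\overline{V}}$, namely the one carrying the boundary on $L_1$, which also has positive area. By conservation of area under Gromov--Hofer convergence the $\overline{V}$-components have total area $a(f_1)(\beta)=A(f_1)\le A(f_2)$, contradicting the previous sentence.

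So no dimension or minimal-Maslov hypotheses are needed here; those enter only in the later Theorem~\ref{unknot-tweak}, where $L=S^1\times S^{n-1}$ \emph{does} have contractible geodesics and one must instead bound the index of the punctured piece in $\overline{V}$. Your instinct that ``something extra'' is required comes from looking at the wrong part of the building.
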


The construction of $J$ proceeds by stretching the neck along a contact-type hypersurface and analysing the limits of holomorphic curves as the neck becomes infinitely long. This is a standard trick but it is important enough that we recall it now. For further details of neck-stretching and SFT compactness we refer to the papers \cite{BEHWZ} and \cite{CieMoh}.

Let
\[W\colon D^*_{\rho,g}L_2\to X\]
be a symplectic embedding, extending $f_2$, given by Weinstein's neighbourhood theorem. Let $0<\rho'<\rho$ and let $\alpha$ be the contact form on $M=S_{\rho',g}^*L_2$ (given by minus the pullback of the Liouville form). The Reeb flow of $\alpha$ is precisely the cogeodesic flow of $g$ and we are assuming there are no contractible closed geodesics. There is an $\epsilon>0$ and a symplectic embedding of
\[\left((-\epsilon,\epsilon)\times M,d(e^r\alpha)\right)\]
into $X$ as a collar neighbourhood of $W(M)$. Here $r$ denotes the coordinate on the interval $(-\epsilon,\epsilon)$. Let $J_0$ be an almost complex structure on $W(D^*_{\rho,g}L_2)$ which preserves the contact structure, is compatible with $d(e^r\alpha)$, sends $\partial_r$ to the Reeb direction and is $r$-invariant on the collar neighbourhood. We will write $\iota\colon(-\epsilon,\epsilon)\times M\to X$ for the embedding of the collar neighbourhood and $U_{\epsilon}$ for its image.

Now we construct a neck-stretching sequence $J_t$ as in {\cite[Section 2.7]{CieMoh}}. Let $X_0=X\setminus U_{\epsilon/2}$ and let $X_k$ be the union of $X_0$ with $N_k=(-\epsilon-k,\epsilon)\times M$, identifying $(-\eta-k,s)\in N_k$ with $\iota(-\eta,s)\in X_0$ and $(\eta,s)\in N_k$ with $\iota(\eta,s)\in X_0$ for $\epsilon/2<\eta<\epsilon$. The almost complex structure $\iota^*J_0$ extends uniquely to an $r$-invariant almost complex structure $I_k$ on $N_k$ and we define $J_k$ to be equal to $J_0$ on $X_0$ and equal to $I_k$ on $N_k$. We equip $X_k$ with a symplectic form $\omega_k$ as follows. Decompose $X_0=W\cup V$ into its connected components, $W$ containing $L_2$ and $V$ containing $L_1$. We define $\omega_k$ to be equal to $\omega$ on $V$, equal to $d(e^r\alpha)$ on $N_k$ and equal to $e^{-k}\omega$ on $W$. The manifolds $X_k$ are all diffeomorphic to one another and the cohomology class of $\omega_k$ is independent of $k$, so by Moser's theorem $(X_k,\omega_k)$ is symplectomorphic to $(X,\omega)$. Pulling back $J_k$ along this symplectomorphism gives us our neck-stretching sequence. For notational convenience we will still denote this almost complex structure by $J_k$.

As $k\to\infty$ the manifolds $(-k-\epsilon,k+\epsilon)\times M$, $W_k=W\cup ((-\epsilon,k+\epsilon)\times M)$ and $V_k=V\cup N_k$ have as direct limits noncompact almost complex manifolds $\overline{S}$, $\overline{W}$ and $\overline{V}$. If we use the symplectic form $d(e^r\alpha)$ on each end of the form $(a,b)\times M$ then the limits $\overline{S}$ and $\overline{V}$ are naturally symplectic; if we rescale the symplectic form on $W_k$ by $e^{-k}$ then the limit is also naturally symplectic. We can identify these limits: $\overline{S}$ is the symplectisation of $M=S_{\rho',g}^*L_2$, $\overline{W}$ is the cotangent bundle of $L_2$ and $\overline{V}$ is the complement $X\setminus L_2$.

The SFT compactness theorem \cite{BEHWZ,CieMoh} states that for a sequence $k_i\to\infty$ and a sequence of $J_{k_i}$-holomorphic curves $u_i\colon\Sigma\to X_{k_i}$ representing some fixed homology class, there is a subsequence $k_{i_j}$ and reparametrisations $\phi_{i_j}$ of $\Sigma$ such that $u_{i_j}\circ\phi_{i_j}^{-1}$ converge (in the Gromov-Hofer sense) to a {\em holomorphic building} which is a union
\[u=u_{\overline{W}}\cup u_{\overline{S}_1}\cup \cdots\cup u_{\overline{S}_{\ell}}\cup u_{\overline{V}}\]
of punctured finite-energy holomorphic curves, where $u_{\overline{W}}$ is a curve in $\overline{W}$, $u_{\overline{V}}$ is a curve in $\overline{V}$ and each $u_{\overline{S}_i}$ is a curve in $\overline{S}$. These curves are asymptotic to cylinders on Reeb orbits and, if we were to glue all components together along their common asymptotics we would end up with a two-dimensional topological surface homeomorphic to $\Sigma$.

Though this theorem is proved only for closed surfaces, the proof extends to the case of holomorphic curves with boundary on a Lagrangian submanifold {\cite[Section 11.3]{BEHWZ}}; indeed, in our particularly simple case the Lagrangian is itself compact.

The key property of Gromov-Hofer convergence which we require for our theorem is the behaviour of area in the limit. We state the relevant result from \cite{CieMoh} in our notation:
\begin{lma}[{\cite[Corollary 2.11]{CieMoh}}]\label{lma-area-conserved}
In the above setting, suppose that $u_k\colon\Sigma\to X_k$ is a Gromov-Hofer convergent sequence of $J_k$-holomorphic maps whose limit building is
\[u=u_{\overline{W}}\cup u_{\overline{S}_1}\cup \cdots\cup u_{\overline{S}_{\ell}}\cup u_{\overline{V}}.\]
Let $\Sigma'$ denote the domain of $u_{\overline{V}}$ and let $\omega_{\overline{V}}$ denote the symplectic form on $\overline{V}$. Then
\[\lim_{k\to\infty}\int_{\Sigma}u_k^*\omega_k=\int_{\Sigma'}u_{\overline{V}}^*\omega_{\overline{V}}.\]
Note that $(\overline{V},\omega_{\overline{V}})$ is symplectomorphic to $(X\setminus L_2,\omega)$.
\end{lma}
Now we will prove Theorem \ref{thm-avoid}.
\begin{proof}[Proof of Theorem \ref{thm-avoid}]
Let us deal with the case of discs: the argument for spheres is entirely analogous. If the theorem is false then for all $t\in[0,\infty)$ there exists a $J_t$-holomorphic disc $u_t\colon (D^2,\partial D^2)\to (X,L_1)$ with boundary on $L_1$ representing the class $\beta$ such that $u_t(D^2)\cap L_2\neq\emptyset$. By the SFT compactness theorem we can find a sequence $t_i\to\infty$ such that $u_i$ Gromov-Hofer converges (after reparametrisations) to a holomorphic building $u=u_{\overline{W}}\cup u_{\overline{S}_1}\cup\cdots\cup u_{\overline{S}_{\ell}}\cup u_{\overline{V}}$. Note that a punctured finite-energy curve in $\overline{V}\cong X\setminus L_2$ is asymptotic to a collection of geodesics in $L_2$ and we can compactify it to get a compact topological surface with boundary on $L_2$.

\begin{figure}
\includegraphics[scale=0.4]{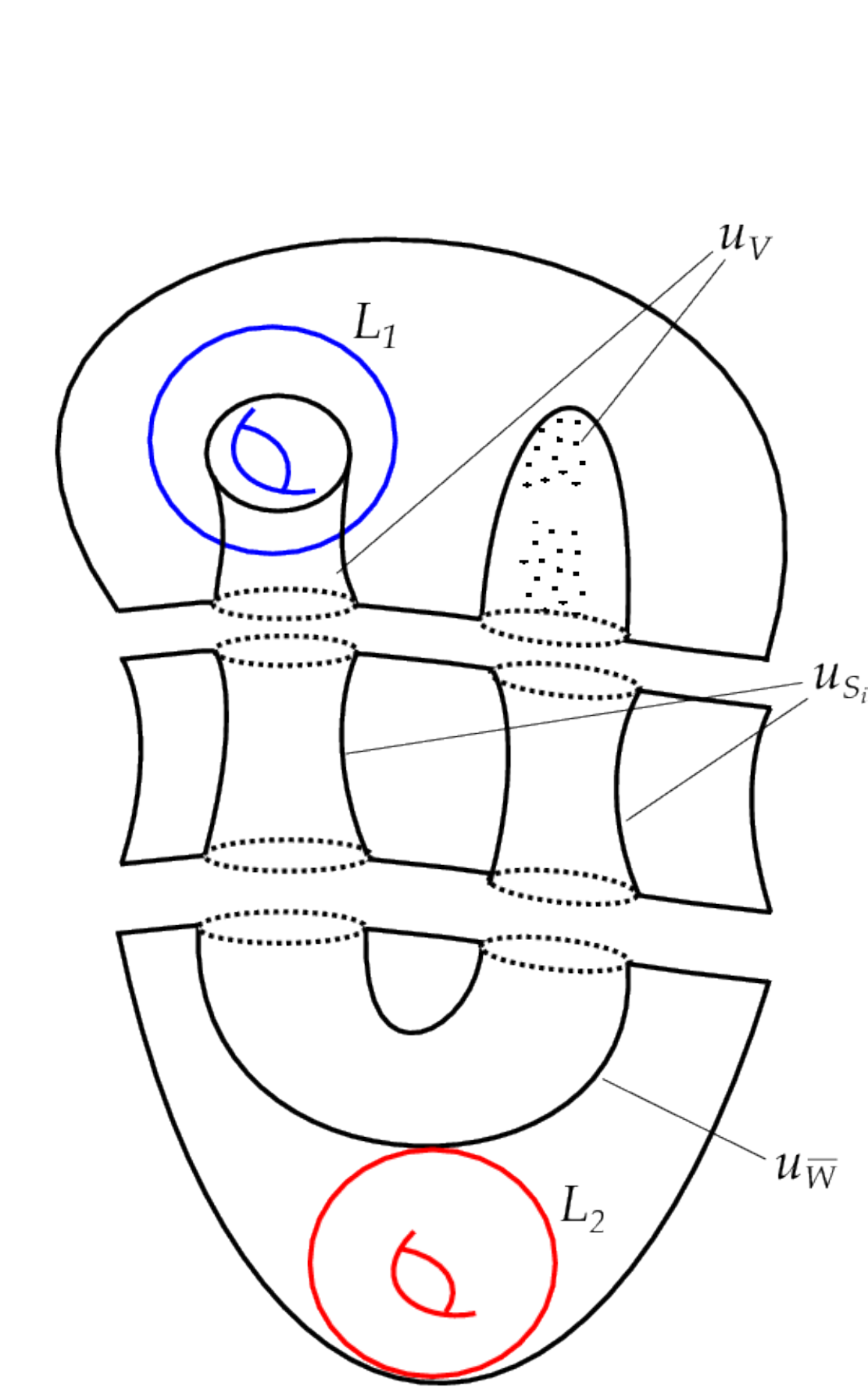}
\caption{The Gromov-Hofer limit must contain a finite-energy plane as a component of $u_V$. This plane (shaded in the figure) has large area since it can be considered as a disc with boundary on $L_2$. This contradicts the conservation of area in the limit.}
\end{figure}

Since the domain of $u_t$ is a disc there must exist a component $v$ of the building whose domain is a plane. The components of the building are asymptotic to closed Reeb orbits. Since there are no contractible Reeb orbits, $v$ has image in $\overline{V}\cong X\setminus L_2$. The asymptotic Reeb orbit of $v$ is a loop in $L_2$ and hence we can think of $v$ as a topological disc with boundary on $L_2$. Its symplectic area is therefore at least $A(f_2)$. Moreover there is another component of $u_{\overline{V}}$ with boundary on $L_1$. However, by Lemma \ref{lma-area-conserved}, the areas of the components of $u_{\overline{V}}$ sum to give $a(f_1)(\beta)$. Since these summands all have strictly positive area we see that $a(f_1)(\beta)>A(f_2)$, which contradicts our assumption that $a(f_1)(\beta)\leq A(f_2)$.

We now show that we can choose $J$ regular for the moduli problem if $a(f_1)(\beta)=A(f_1)$. With this extra hypothesis, $\beta$ is a minimal homology class of discs so we know that pseudoholomorphic discs representing $\beta$ are somewhere injective (by {\cite[Theorem A]{Laz}}) and hence we can achieve transversality for a Baire set of almost complex structures. Let $J_k$ be a sequence of almost complex structures from this Baire set such that $J_k\to J$ as $k\to\infty$ where $J$ is the almost complex structure we have already constructed. Suppose that for all $k$ is a $J_k$-disc $u_k$ representing $\beta$ with $u_k(D^2)\cap L_2\neq\emptyset$. There is a Gromov convergent subsequence $u_{k_j}$. Since $\beta$ is minimal there is no bubbling and the limit is a $J$-disc or sphere representing $\beta$ and intersecting $L_2$, which is a contradiction.
\end{proof}

\section{Unlinking of Lagrangians}

\begin{thm}\label{unlink}
Suppose that $L_1$ and $L_2$ are $n$-manifolds such that:
\begin{itemize}
\item $L_2$ admits a metric with no contractible geodesics,
\item $H^{n-1}(L_2;\ZZ)$ is torsionfree,
\item the cohomology of the universal cover of $L_1$ vanishes in odd degrees,
\item $L_1$ is orientable and spin.
\end{itemize}
Let $f_k\colon L_k\to\CC^n$, $k=1,2$, be Lagrangian embeddings and suppose that
\begin{itemize}
\item $L_1$ is monotone,
\item $L_1$ links $L_2$.
\end{itemize}
Then
\[A(f_2)<A(f_1),\]
where $A(f_k)$ are defined by Equation \eqref{eqn-area-inf}.
\end{thm}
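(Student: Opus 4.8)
The plan is to prove the contrapositive: assuming $A(f_1)\le A(f_2)$, I will show that $[L_1]$ is nullhomologous in $\CC^n\setminus L_2$, so that $L_1$ cannot link $L_2$. First I would pin down the relevant discs. Since $L_1$ is monotone we have $a(f_1)=K\mu(f_1)$, so the infimal area $A(f_1)$ is attained exactly on the classes of minimal Maslov number, and by Gromov's theorem $L_1\subset\CC^n$ bounds nonconstant holomorphic discs; the area-minimising ones lie in $\OP{Hur}_2(\CC^n,L_1;\ZZ)$ (they are boundaries of maps of discs) and all share one Maslov index $N_1$ and one area $A(f_1)$. Feeding such a class $\beta$ into Theorem \ref{thm-avoid}---legitimately, since $A(f_1)\le A(f_2)$ and $\beta\in\OP{Hur}_2$---yields a regular almost complex structure $J$ for which every minimal-area $J$-disc on $L_1$ is disjoint from $L_2$; since the proof of Theorem \ref{thm-avoid} only uses the area $A(f_1)$, one and the same $J$ works for all minimal-area classes simultaneously.

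Next I would build the nullhomology out of these discs. When the minimal Maslov number is $N_1=2$, the moduli space $\mathcal{M}(\beta)$ of unparametrised $J$-discs in a minimal class $\beta$ is a compact manifold of dimension $n-1$ (no bubbling is possible for an area-minimising class), which is coherently orientable because $L_1$ is orientable and spin. Its universal disc $\mathcal{U}_\beta$ is then an oriented $(n+1)$-manifold whose boundary is the circle bundle $\mathcal{M}_1^\partial(\beta)$; the total evaluation maps $\mathcal{U}_\beta$ into $\CC^n\setminus L_2$, while the boundary evaluation maps $\mathcal{M}_1^\partial(\beta)\to L_1$. Pushing fundamental classes forward shows that $n_\beta[L_1]$ bounds $\ev_*[\mathcal{U}_\beta]$ inside $\CC^n\setminus L_2$, where $n_\beta=\deg(\ev\colon\mathcal{M}_1^\partial(\beta)\to L_1)$ is the signed one-pointed disc count. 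Hence $n_\beta[L_1]=0$ in $H_n(\CC^n\setminus L_2;\ZZ)$ for every minimal class $\beta$.

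To convert this into $[L_1]=0$ I need $n_\beta\neq 0$ for some $\beta$ together with torsion-freeness of the target group. The latter is routine: by Alexander duality $H_n(\CC^n\setminus L_2;\ZZ)\cong\widetilde{H}^{\,n-1}(L_2;\ZZ)$ (place $L_2$ inside $S^{2n}$ and note that deleting the point at infinity does not affect degree-$n$ homology for $n\ge 2$), which is torsionfree by hypothesis, so $n_\beta[L_1]=0$ with $n_\beta\neq 0$ forces $[L_1]=0$, contradicting the assumption that $L_1$ links $L_2$.

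The hard part, which I expect to be the main obstacle, is producing a minimal class $\beta$ with $n_\beta\neq 0$ as an integer, and in particular first establishing that $N_1=2$ (otherwise $\mathcal{M}_1^\partial(\beta)$ has dimension exceeding $n$ and its image is homologically trivial in $L_1$, giving no information). This is precisely where the vanishing of $H^{\mathrm{odd}}(\widetilde{L}_1;\ZZ)$ enters. Since every compact Lagrangian in $\CC^n$ is displaceable, both the ordinary and the $\pi_1$-lifted (Damian) monotone Floer homologies of $L_1$ vanish; running the associated spectral sequence starting from $H^*(\widetilde{L}_1;\ZZ)$, the hypothesis that this is concentrated in even degrees forces the disc-counting differential to be nondegenerate, which pins down $N_1=2$ and identifies a minimal class whose signed count is a nonzero integer. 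Controlling the signs in this lifted spectral sequence, and the interaction of the group-ring coefficients with the parity of $H^*(\widetilde{L}_1)$, is the delicate point; by comparison the disc-avoidance input of Theorem \ref{thm-avoid} and the Alexander-duality bookkeeping are comparatively routine.
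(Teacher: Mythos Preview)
Your proposal is correct and follows essentially the same route as the paper: invoke Damian's lifted Floer theory (using the odd-degree vanishing of $H^*(\widetilde{L}_1)$ and the spin condition) to obtain a Maslov-2 class $\beta$ whose one-pointed evaluation map has nonzero degree $d$, apply Theorem~\ref{thm-avoid} under the contrapositive assumption $A(f_1)\le A(f_2)$ so that all such discs miss $L_2$, and then use the moduli space with an interior marked point as a chain bounding $d[L_1]$ in $\CC^n\setminus L_2$, concluding via $H_n(\CC^n\setminus L_2;\ZZ)\cong H^{n-1}(L_2;\ZZ)$ torsionfree. The paper's proof is terser but structurally identical; your write-up simply makes explicit the Alexander duality step and the role of the spin hypothesis in orienting the moduli space.
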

\begin{proof}
By a theorem of Damian \cite{Dam} and its sharpening {\cite[Proposition 7]{EvaKed}} we know that for a regular $J$ there is a free homotopy class $\beta$ of loops on $L_1$ with Maslov index 2 such that, when $J$ is regular, the moduli space
\[\mM_{0,1}(\beta,J)\]
(of $J$-holomorphic discs with boundary on $L_1$ representing $\beta$ and having one boundary marked point) contains a component $M$ such that the evaluation map
\[\ev\colon M\to L_1\]
has nonzero degree, say $d$. Note that the homology class of $\beta$ is minimal by monotonicity. For contradiction, assume that $A(f_2)\geq A(f_1)$. By Theorem \ref{thm-avoid} we may chose $J$ so that no discs in $M$ intersect $L_2$. Let $N$ denote the moduli space of $J$-holomorphic discs from $M$ with a marked point on the interior. Note that $\partial N=M$. Then $N$ gives a nullhomology of $[M]=d[L_1]$ in $\CC^n\setminus L_2$. Since $H_n(\CC^n\setminus L_2,\ZZ)\cong H^{n-1}(L_2;\ZZ)$ is torsionfree by assumption, this implies that $L_1$ is nullhomologous in $\CC^n\setminus L_2$.
\end{proof}

For monotone Lagrangian tori one does not require the full lifted Floer homology used by Damian or the proposition from \cite{EvaKed} to get the existence of these holomorphic discs; one can get away with an argument, due to Buhovsky, using the quantum product structure on Floer cohomology and the Oh spectral sequence: see {\cite[Theorem 2]{Buh}}.

\section{Unknottedness of Lagrangians}

\subsection{Statement of results}

\begin{thm}\label{unknot}
Let $n\geq 4$ and let $L$ be an $n$-manifold satisfying the following conditions:
\begin{enumerate}
\item\label{condition-contractible-geodesics} $L$ admits a metric $g$ with no nonconstant contractible geodesics,
\item\label{condition-torsionfree} the cohomology group $H^{n-1}(L;\ZZ)$ is torsionfree.
\end{enumerate}
Suppose that $f_1\colon L\to\CC^n$ is a monotone Lagrangian embedding and suppose that there exists a submersion $\sigma\colon L\to S^1$ with connected fibre satisfying
\[\mu(f_1)=\sigma^*\left[\frac{d\theta}{2\pi}\right].\]
Suppose that $f_2\colon L\to\CC^n$ is another monotone Lagrangian embedding such that the Lagrangian frame maps are $SO(n)$-equivariantly homotopic when restricted to the complement $L_x$ of a point $x\in L$:
\[\OP{Fr}(f_1)|_{L_x}\simeq_{SO(n)}\OP{Fr}(f_2)|_{L_x}.\]
Then $f_2$ is smoothly iso\-to\-pic to $f_1$ through embeddings (not necessarily through Lagrangian embeddings).
\end{thm}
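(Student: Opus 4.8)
The plan is to reduce everything to the Haefliger--Hirsch classification recalled in Section~\ref{sect-HaeHir}. For $n\geq 4$ this theory classifies embeddings $L\hookrightarrow\CC^n$ up to smooth isotopy by two pieces of data: the regular homotopy class of the underlying immersion (equivalently the homotopy class of the Lagrangian Gauss map) and a secondary \emph{self-linking}, or Haefliger--Hirsch, field. Accordingly I would prove the theorem in two stages: first that the Gauss-map data of $f_1$ and $f_2$ agree as far as the smooth isotopy class can detect, and second that the two Haefliger--Hirsch fields coincide, the latter being where monotonicity and the unlinking theorem enter.

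For the first stage, the hypothesis $\OP{Fr}(f_1)|_{L_x}\simeq_{SO(n)}\OP{Fr}(f_2)|_{L_x}$ together with Gromov--Lees (Theorem~\ref{thm-gro-lee}) shows that $f_1|_{L_x}$ and $f_2|_{L_x}$ are homotopic through Lagrangian immersions of the open manifold $L_x$, so their Gauss maps agree on $L_x$ and supply a common normal framing against which the two Haefliger--Hirsch fields can be compared. The remaining discrepancy over the top cell is the difference of the frame maps, a map $L\to U(n)$ that is null on $L_x$ and hence factors through $L/L_x\cong S^n$, defining a class in $\pi_n(U(n))$. This discrepancy sits in degree $n$, whereas the Haefliger--Hirsch field is the $(n-1)$-dimensional self-linking class; it therefore does not affect the comparison that governs the smooth (as opposed to Lagrangian) isotopy type, which is exactly why agreement is demanded only on $L_x$, and which is reflected in the bijection between smooth isotopy classes and $H_1(L;Y(n))$ noted in the introduction. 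Finally, since $H^1(L)\cong H^1(L_x)$ for $n\geq 2$ and the Maslov class is pulled back from the Gauss map, we obtain $\mu(f_2)=\mu(f_1)=\sigma^*[d\theta/2\pi]$.

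For the second stage I would compute the self-linking field geometrically by a pushoff, following Borrelli (Theorem~\ref{thm-haef-hir-lag} and the argument of Section~\ref{sect-comp-prf}). Using the submersion $\sigma$, let $L_k'$ be the image under a Weinstein neighbourhood of the graph of the closed $1$-form $\rho'\,d\sigma$. Because the Maslov class of a graph agrees with that of the zero section while the area class shifts by $[\rho'\,d\sigma]=2\pi\rho'\,\mu(f_k)$, the pushoff $L_k'$ is again monotone, now with constant $K_k+2\pi\rho'$; taking $\rho'\in(0,\rho)$ makes this strictly larger than $K_k$. The unlinking theorem (Theorem~\ref{unlink}, or Theorem~\ref{mainthm-unlink} in the torus case) applied to $L_k$ and $L_k'$ then forbids $L_k$ from linking $L_k'$, so $[L_k]=0$ in $H_n(\CC^n\setminus L_k')\cong H^{n-1}(L_k;\ZZ)$. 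By condition~\ref{condition-torsionfree} this integral group is torsionfree, so vanishing of the linking class faithfully records vanishing of the Haefliger--Hirsch field. As the pushoffs of $f_1$ and $f_2$ are set up identically (their Gauss maps and Maslov classes agreeing by the first stage), both embeddings carry the trivial self-linking field; combined with the first stage and the Haefliger--Hirsch classification this yields a smooth isotopy from $f_2$ to $f_1$.

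The main obstacle is the precise identification in the second stage of the Haefliger--Hirsch field with the linking of the $\sigma$-pushoff, i.e.\ carrying out the Borrelli computation for a general $L$ rather than only for $S^1\times S^k$, together with the coefficient bookkeeping relating the integral linking class in $H^{n-1}(L;\ZZ)$ to the $Y(n)$-valued obstruction that literally classifies isotopy; condition~\ref{condition-torsionfree} is what lets one pass from ``linking number zero'' to ``field trivial'' cleanly, in particular controlling the $2$-torsion subtlety when $n$ is even. A secondary difficulty is to justify rigorously that agreement of the frame maps only on $L_x$ suffices, namely that the degree-$n$ discrepancy in $\pi_n(U(n))$ cannot alter the degree-$(n-1)$ self-linking class that determines the smooth isotopy type.
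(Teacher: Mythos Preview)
Your plan coincides with the paper's: push off along $d\sigma$, apply the unlinking theorem to identify the Haefliger--Hirsch field, then use the frame-map homotopy to compare Haefliger--Hirsch maps and invoke Theorem~\ref{thm-HaeHir}. Two points are wrong as stated, however.

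The direction of your pushoff argument is inverted. In the paper's convention (Lemma~\ref{lma-monopush}) the graph of $\rho'\,d\sigma$ has monotonicity constant $K-2\pi\rho'<K$, not $K+2\pi\rho'$; one then applies Theorem~\ref{unlink} with $L_1=L_k'$ (the pushoff, smaller constant) and $L_2=L_k$, concluding that the \emph{pushoff} is null in the complement of the \emph{original}. That is precisely the defining property of a Haefliger--Hirsch field (see Lemma~\ref{lma-HaeHir}). Your version has the roles reversed and concludes $[L_k]=0$ in $\CC^n\setminus L_k'$, which is not what the definition asks for. Separately, the phrase ``both embeddings carry the trivial self-linking field'' reflects a misconception: the Haefliger--Hirsch field exists and is unique up to homotopy for \emph{every} embedding, so there is no distinguished ``trivial'' one. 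What the unlinking argument actually proves is that $J(f_k)_*\nabla\sigma$ \emph{is} this field for each $k$. It remains to show that the Haefliger--Hirsch \emph{maps} $s(f_1),s(f_2)\colon\OP{Fr}^+(L_x)\to V_{2n,n+1}$ are $SO(n)$-equivariantly homotopic, and your ``pushoffs are set up identically'' does not supply this. The paper handles it by an explicit formula (Lemma~\ref{lma-htpy-HaeHir}): given the homotopy $\mathcal{F}_t$ of frame maps on $L_x$, the assignment $e\mapsto(\mathcal{F}_te,\,J\sum_k a(e)_k\mathcal{F}_te_k)$ is the required homotopy of Haefliger--Hirsch maps. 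Finally, condition~(\ref{condition-torsionfree}) is used inside Theorem~\ref{unlink}---to pass from $d[L']=0$ to $[L']=0$---rather than at the stage of comparing fields where you place it.
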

Note that the frame maps being equivariantly homotopic when restricted to the complement of a point is a weaker condition than being equivariantly homotopic over the whole of $L$. We used the latter condition in the statements of Theorems \ref{mainthm-unknot} and \ref{mainthm-unknot-tweak} to make them more readable, but only the weaker assumption is needed.
\begin{rmk}\label{rmk-asph}
Note that the condition that $L$ admits a metric $g$ with no nonconstant contractible geodesics implies that $L$ is aspherical. This is a theorem of Lyusternik and Fet \cite{LyuFet}: if there is no nonconstant contractible geodesic then the space of contractible based loops retracts to the constant loop via the downward gradient flow of the energy functional; since this component of the based loopspace is contractible, all higher homotopy groups vanish.
\end{rmk}

\begin{cor}\label{cor-unknot}
For $n\geq 4$, the smooth isotopy class of a monotone Lagrangian embedding of the $n$-torus in $\CC^n$ is determined by the $SO(n)$-equivariant homotopy type of the Lagrangian frame map.
\end{cor}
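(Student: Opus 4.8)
The plan is to obtain Corollary \ref{cor-unknot} as an immediate special case of Theorem \ref{unknot}, taking $L=T^n$, so the only real work is checking that the hypotheses of that theorem hold automatically for monotone Lagrangian tori. First I would dispose of the two topological conditions. Giving $T^n$ a flat metric, every nonconstant closed geodesic is the image of a straight segment from $0$ to a nonzero lattice point and hence represents a nonzero class in $\pi_1$, so there are no nonconstant contractible closed geodesics and condition \ref{condition-contractible-geodesics} holds. Condition \ref{condition-torsionfree} holds because $H^{n-1}(T^n;\ZZ)\cong\ZZ^{n}$ is free.

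The substantive step is producing the submersion $\sigma$. What is needed is that the Maslov class $\mu(f_1)\in H^1(T^n;\ZZ)$ of a monotone Lagrangian torus is \emph{primitive}. Here $\mu$ is normalised as the generator of $H^1(\Lambda^+(n);\ZZ)$ coming from $\det\colon U(n)/SO(n)\to U(1)$, whereas the Maslov index of a disc is computed from $\det^2$; consequently the index of a disc $\beta$ with boundary on the orientable Lagrangian $L$ equals $2\langle\mu(f_1),\partial\beta\rangle$, and in particular every disc has even index. A monotone Lagrangian torus in $\CC^n$ bounds a holomorphic disc of Maslov index $2$ (these are exactly the discs supplied by the Damian--Buhovsky input used in the proof of Theorem \ref{unlink}), so there is a class $c\in H_1(T^n;\ZZ)$ with $\langle\mu(f_1),c\rangle=1$, i.e.\ $\mu(f_1)$ is primitive. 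A primitive integral class on the torus is the pullback of the generator of $H^1(S^1;\ZZ)$ under the associated map $\sigma\colon T^n\to S^1$, which is a submersion, and primitivity forces the kernel lattice to be a rank $n-1$ summand so that the fibre $T^{n-1}$ is connected. Thus $\mu(f_1)=\sigma^*[d\theta/(2\pi)]$, as required.

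It remains to feed this into the theorem. If $f_1,f_2\colon T^n\to\CC^n$ are monotone Lagrangian embeddings whose Lagrangian frame maps are $SO(n)$-equivariantly homotopic over all of $T^n$, then a fortiori they are $SO(n)$-equivariantly homotopic over the complement $L_x$ of any point $x$. All hypotheses of Theorem \ref{unknot} are now met, so $f_2$ is smoothly isotopic to $f_1$. Since homotopic frame maps always force the two embeddings into a common smooth isotopy class, that isotopy class is determined by the $SO(n)$-equivariant homotopy type of the frame map, which is the assertion of the corollary.

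The main obstacle is the middle paragraph: one must invoke that a monotone Lagrangian torus in $\CC^n$ has minimal Maslov number $2$ (equivalently, that it bounds a Maslov index $2$ disc), and one must keep careful track of the factor of $2$ relating the present normalisation of $\mu$ on the oriented Lagrangian Grassmannian (via $\det$) to the usual disc Maslov index (via $\det^2$), since it is precisely this that upgrades "minimal Maslov number $2$" to "$\mu(f_1)$ primitive". The remaining steps are routine verifications of the topological and homotopy-theoretic hypotheses.
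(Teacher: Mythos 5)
Your proposal is correct and follows essentially the same route as the paper: verify the flat-metric and torsion-freeness hypotheses, use the Damian--Buhovsky Maslov~$2$ disc to show the Maslov class is primitive (with the same bookkeeping of the factor of $2$ between the generator of $H^1(\Lambda^+(n);\ZZ)$ and the disc Maslov index), represent that primitive class by a submersion $\sigma\colon T^n\to S^1$ with connected fibres, and apply Theorem \ref{unknot}. Your explicit discussion of the $\det$ versus $\det^2$ normalisation is, if anything, slightly more careful than the paper's own phrasing of that step.
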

\begin{proof}
The flat $n$-torus, $n\geq 4$, satisfies Conditions \ref{condition-contractible-geodesics} and \ref{condition-torsionfree} of Theorem \ref{unknot}. If $f_1\colon T^n\to\CC^n$ is a monotone Lagrangian embedding then by \cite{Buh} or \cite{Dam} we know there is a relative homology class $\beta\in H_2(\CC^n,T^n;\ZZ)\cong H_1(L;\ZZ)$ with $\mu(\beta)=2$. In particular, half the Maslov class $\frac{1}{2}\mu(f_1)\in H^1(L;\ZZ)$ is a primitive cohomology class: if $\mu(f_1)=k\nu$ for some class $\nu\in H^1(L;\ZZ)$ then $1=\frac{1}{2}\mu(\beta)=k\nu(\beta)$ so $k=\pm 1$. Since $L$ is a torus, any primitive homology class $c\in H_1(L;\ZZ)$ can be represented as $\sigma^*\left[\frac{d\theta}{2\pi}\right]$ for a circle-valued function $\sigma$ satisfying the conditions of Theorem \ref{unknot}. The corollary is now immediate.
\end{proof}
\begin{rmk}
By Theorem \ref{thm-gro-lee}, knowing the $SO(n)$-equivariant homotopy type of the Lagrangian frame map is the same as knowing the homotopy class of Lagrangian immersions. Therefore Theorem \ref{mainthm-unknot} is a direct consequence of Corollary \ref{cor-unknot}.
\end{rmk}

\subsection{Haefliger-Hirsch theory}\label{sect-HaeHir}

In their paper \cite{HaeHir} Haefliger and Hirsch classified smooth embeddings of compact $n$-manifolds into $\CC^n$ up to isotopy. We explain their results in this section.

\begin{dfn}\label{dfn-pushoff}
Let $L$ be a closed, oriented, connected $n$-manifold and suppose that $f\colon L\to\CC^n$ is an embedding. Let $x\in L$ be a point, let $x\in D'\subset D\subset L$ be nested disc neighbourhoods of $x$; let $\rho$ be a cut-off function supported on $L\setminus D'$ and equal to one outside $D$. Define $L_x=L\setminus\{x\}$ and $f_x=f|_{L_x}$. If $v\in\Gamma(f_x^*(T\CC^n))$ is a vector field on $L_x$, normal to $L_x$, then $\rho v$ extends to a vector field on $L$, vanishing on $D'$. Fix a tubular neighbourhood of $L$, identified via the exponential map with the normal bundle $\nu L$. Then for sufficiently small $\epsilon$ we can consider the section $\epsilon\rho v$ of $\nu L$ as a submanifold of $\CC^n$ and we call this the {\em pushoff $L'$ of $L$ along $v$}. Note that $L'\subset \CC^n\setminus(L\setminus D)$.
\end{dfn}
\begin{dfn}
In the context of the previous definition, a {\em Haefliger-Hirsch field}, $v\in \Gamma(f_x^*(T\CC^n))$, is a normal vector field along $L_x$ with the property that the pushoff of $L$ along $v$ is nullhomologous in $\CC^n\setminus (L\setminus D)$.
\end{dfn}
\begin{lma}[See \cite{HaeHir}]
Given an embedding $f$ there is a Haefliger-Hirsch field $v$ which is unique up to homotopy.
\end{lma}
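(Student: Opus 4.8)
The plan is to recognise a Haefliger--Hirsch field as a nowhere-zero normal field over $L_x$ whose pushoff cycle is homologically trivial, and then to show that such fields are governed by a single cohomology group via obstruction theory, the nullhomology condition pinning down a unique homotopy class through Alexander duality. Let $\nu$ denote the normal bundle of $f$; since $L$ and $\CC^n$ are oriented, $\nu$ is an oriented rank-$n$ bundle, and its unit sphere bundle $S\nu$ has fibre $S^{n-1}$. A nowhere-zero normal field along $L_x$ is exactly a section of $S\nu|_{L_x}$. First I would record that $L_x=L\setminus\{x\}$ has the homotopy type of an $(n-1)$-complex: taking a Morse function on $L$ with a single maximum at $x$, the complement $L_x$ deformation retracts onto a sublevel set built only from handles of index $\leq n-1$. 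Consequently the primary obstruction to a section of $S\nu|_{L_x}$, namely the Euler class $e(\nu)\in H^n(L_x;\ZZ)$, vanishes, as do all higher obstructions, for dimensional reasons. Hence a nowhere-zero normal field over $L_x$ exists. (Such a field cannot be extended over $x$ without a zero, whose index is the normal Euler number; cutting off by a bump that vanishes only at $x$ produces a pushoff $L'_v$ meeting $L$ in the single point $x$, so that $L'_v\subset\CC^n\setminus L_x$.)

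Next I would classify these fields up to homotopy. Because $\pi_i(S^{n-1})=0$ for $i<n-1$ and $\pi_{n-1}(S^{n-1})\cong\ZZ$, and $L_x$ is $(n-1)$-dimensional, standard obstruction theory shows that two sections of $S\nu|_{L_x}$ are homotopic if and only if a single difference class in $H^{n-1}(L_x;\ZZ)$ vanishes, and that every class is realised. Thus the homotopy classes of nowhere-zero normal fields over $L_x$ form a torsor under $H^{n-1}(L_x;\ZZ)$.

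It remains to single out, uniquely, the field whose pushoff is nullhomologous. To a field $v$ I associate the class $[L'_v]\in H_n(\CC^n\setminus L_x;\ZZ)$. A Mayer--Vietoris computation (covering $\CC^n\setminus L_x$ by a contractible neighbourhood of $x$ and by $\CC^n\setminus L$, with intersection a meridian $S^{n-1}$) together with Alexander duality identifies this group: the meridian generates $H_{n-1}(\CC^n\setminus L;\ZZ)\cong\ZZ$, so the connecting map is injective and contributes no new $H_n$, whence
\[H_n(\CC^n\setminus L_x;\ZZ)\cong H_n(\CC^n\setminus L;\ZZ)\cong H^{n-1}(L;\ZZ)\cong H^{n-1}(L_x;\ZZ),\]
the last isomorphism coming from the long exact sequence of the pair $(L,L_x)$. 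I would then show that $v\mapsto[L'_v]$ is a bijection from the torsor onto this group, with linear part an isomorphism $\Phi\colon H^{n-1}(L_x;\ZZ)\to H_n(\CC^n\setminus L_x;\ZZ)$: altering $v$ by $\delta$ alters $[L'_v]$ by $\Phi(\delta)$. Granting this, there is exactly one homotopy class of $v$ with $[L'_v]=0$, which gives both existence and uniqueness.

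The main obstacle is the last step, namely identifying the linear part $\Phi$ with the Alexander-duality isomorphism. Concretely one must verify that the linking number of the pushoff $L'_v$ with an $(n-1)$-cycle $\gamma\subset L_x$ changes by exactly $\langle\delta,\gamma\rangle$ when $v$ is altered by $\delta$ --- a framing/winding computation expressing how the difference of two sections of the sphere bundle, evaluated against the cycle dual to $\gamma$, measures the change in linking --- and to carry this out while tracking the special role of the puncture $x$ and of the cut-off used to define $L'_v$. Once $\Phi$ is seen to be an isomorphism, the conclusion is immediate.
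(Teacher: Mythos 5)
The paper does not actually prove this lemma---it is imported wholesale from \cite{HaeHir}---so there is no in-house argument to compare yours against; what you have written is essentially Haefliger and Hirsch's own strategy. The steps you do carry out are correct: $L_x$ has the homotopy type of an $(n-1)$-complex, the normal bundle $\nu$ is an oriented rank-$n$ bundle, so sections of $S\nu|_{L_x}$ exist and their homotopy classes form a torsor under $H^{n-1}(L_x;\ZZ)$; and your Mayer--Vietoris computation is right, since the meridian sphere generates $H_{n-1}(\CC^n\setminus L;\ZZ)\cong H^n(L;\ZZ)\cong\ZZ$ and hence the connecting map is injective, giving $H_n(\CC^n\setminus L_x;\ZZ)\cong H_n(\CC^n\setminus L;\ZZ)\cong H^{n-1}(L;\ZZ)\cong H^{n-1}(L_x;\ZZ)$. (You should also record, though it is routine, that $[L'_v]$ depends only on the homotopy class of $v$ and not on the cut-off or the pushoff scale.)

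The genuine gap is exactly the step you flag and then defer: that the affine map $v\mapsto[L'_v]$ has linear part an isomorphism $\Phi$. Without it neither existence nor uniqueness is established, so the proof is incomplete at its only substantive point. Moreover, the route you propose for closing it---checking that the linking number of $L'_v$ with an $(n-1)$-cycle $\gamma\subset L_x$ changes by $\langle\delta,\gamma\rangle$---only determines the change of $[L'_v]$ in the torsion-free quotient of $H_n(\CC^n\setminus L_x;\ZZ)\cong H^{n-1}(L_x;\ZZ)$, so it cannot show that $\Phi$ is injective on torsion classes; the lemma as stated makes no torsion-freeness assumption on $L$. To close the gap one should argue at the level of the duality isomorphism itself: extend $v_0,v_1$ over $L$ vanishing only at $x$, form the trace $W=\{f(y)+\epsilon((1-t)v_0(y)+tv_1(y))\}$, an $(n+1)$-chain with $\partial W=L'_{v_1}-L'_{v_0}$, and observe that for small $\epsilon$ its intersection with a tubular neighbourhood of $L$ is concentrated where $v_0$ and $v_1$ are negatively proportional; the relative class of $W$ in $H_{n+1}(N,\partial N;\ZZ)\cong H^{n-1}(L;\ZZ)$ is then, up to sign, the difference cochain $\delta(v_0,v_1)$ (this is the standard ``coincidence locus computes the difference class'' argument, carried out cell by cell). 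Granting that bookkeeping, $\Phi=\pm\id$ under your identifications and the conclusion follows.
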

Define a fibre bundle
\[\mathcal{V}_{2n,n+1}\to L\]
whose fibre is the Stiefel manifold $V_{2n,n+1}\cong SO(2n)/SO(n-1)$ of $n+1$-frames in $\RR^{2n}$. This bundle is associated to the $SO(n)$-frame bundle of $L$ by the action of $SO(n)$ on the first $n$ vectors of the $n+1$-frame. Given an embedding $f$ with Haefliger-Hirsch field $v$ we can define a section $s(f)$ of $\mathcal{V}_{2n,n+1}|_{L_x}$. Giving a section of this associated bundle is equivalent to giving an $SO(n)$-equivariant map from the oriented frame bundle $\OP{Fr}^+(L_x)$ to the Stiefel manifold. We just send an oriented orthonormal frame $F$ in $T_yL$ to
\[s(f)(y)=(P_{f(y)}f_*(F),P_{f(y)}v)\in V_{2n,n+1}.\]
where $P$ denotes the trivialisation defined by Equation \eqref{eq-triv}. We are writing the $n+1$-frame as $(G,w)$ where $G$ consists of the first $n$ vectors and $w$ is the last. We call the $SO(n)$-equivariant map
\[s(f)\colon\OP{Fr}^+(L_x)\to V_{2n,n+1}=SO(2n)/SO(n-1)\]
the {\em Haefliger-Hirsch map}.
\begin{thm}[See \cite{HaeHir}]\label{thm-HaeHir}
Suppose $n\geq 4$. If $f'\colon L\to\CC^n$ is another embedding then it is smoothly isotopic to $f$ if and only if the Haefliger-Hirsch maps $s(f)$ and $s(f')$ are homotopic as $SO(n)$-equivariant maps $\OP{Fr}^+(L_x)\to V_{2n,n+1}=SO(2n)/SO(n-1)$.
\end{thm}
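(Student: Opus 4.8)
\section*{Proof proposal}

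The plan is to derive this classification from the general metastable-range embedding theory of Haefliger, in which smooth isotopy classes of embeddings are encoded by equivariant homotopy classes of maps out of the deleted product, and then to repackage that equivariant homotopy datum as the single geometric invariant carried by the Haefliger--Hirsch map. First I would record that the hypothesis $n\geq 4$ is precisely the metastable condition $2(2n)>3(n+1)$ for embeddings of a closed $n$-manifold into $\RR^{2n}$, so that Haefliger's theorem applies. The easy direction, that an ambient isotopy between $f$ and $f'$ induces an $SO(n)$-equivariant homotopy of their Haefliger--Hirsch maps, is immediate from the uniqueness up to homotopy of the Haefliger--Hirsch field; the content is the converse.

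To set up the equivariant reformulation, introduce the \emph{deleted product} $\tilde L=(L\times L)\setminus\Delta$ with the free involution $T(p,q)=(q,p)$, and associate to each embedding $f$ the normalised secant map
\[\tilde f\colon \tilde L\to S^{2n-1},\qquad \tilde f(p,q)=\frac{f(p)-f(q)}{|f(p)-f(q)|},\]
which is equivariant for $T$ and the antipodal action on $S^{2n-1}$. Haefliger's theorem \cite{HaeHir} then asserts that in the metastable range $f$ and $f'$ are smoothly isotopic if and only if $\tilde f$ and $\tilde f'$ are $\ZZ/2$-equivariantly homotopic. The task thereby reduces to a natural identification of the equivariant homotopy set $[\tilde L,S^{2n-1}]_{\ZZ/2}$ with the set $[\OP{Fr}^+(L_x),V_{2n,n+1}]_{SO(n)}$ of equivariant homotopy classes of Haefliger--Hirsch maps.

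Next I would analyse $\tilde L$ through an equivariant tubular neighbourhood of the diagonal. Since the normal bundle of $\Delta\cong L$ in $L\times L$ is $TL$, a deleted neighbourhood of the diagonal is equivariantly the unit tangent sphere bundle $S(TL)$ with the fibrewise antipodal involution, and there $\tilde f$ extends as the tangential Gauss map $v\mapsto f_*v/|f_*v|$, recording the tangent frame $f_*F$. On the complement, general position in the metastable range leaves a single obstruction: extending a unit normal field $v$ over all of $L$ meets the normal Euler class, which is exactly why a point $x$ must be deleted, for on $L_x$ the field $v$ can be taken nonvanishing, and recording the pair $(f_*F,v)$ is precisely the section $s(f)$ of $\mathcal{V}_{2n,n+1}|_{L_x}$. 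The Stiefel fibre $V_{2n,n+1}=SO(2n)/SO(n-1)$ is $(n-2)$-connected with $\pi_{n-1}(V_{2n,n+1})\cong Y(n)$, so obstruction theory over the $n$-manifold $L$ collapses all higher data and leaves exactly the self-linking invariant in $H^{n-1}(L;Y(n))\cong H_1(L;Y(n))$ that the extra vector field encodes, matching the count stated in the introduction.

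The main obstacle is this final identification: I must show that the construction $f\mapsto s(f)$ carries $[\tilde L,S^{2n-1}]_{\ZZ/2}$ bijectively onto $[\OP{Fr}^+(L_x),V_{2n,n+1}]_{SO(n)}$. This requires controlling the $\ZZ/2$-equivariant cell structure of the deleted product away from the diagonal and verifying that the only surviving obstruction is the self-linking class matched by the Haefliger--Hirsch field, rather than some further contribution from the global topology of $\tilde L$. It is in this step that both the connectivity of $V_{2n,n+1}$ and the metastable hypothesis $n\geq 4$ are used in an essential way, since they guarantee that the primary obstruction in degree $n-1$ is the unique one and that the retraction onto the near-diagonal data is equivariantly unobstructed.
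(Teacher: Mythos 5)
A preliminary remark: the paper offers no proof of Theorem \ref{thm-HaeHir} at all --- it is quoted from \cite{HaeHir} --- so your proposal has to be measured against the original Haefliger--Hirsch argument. Your route is the historically correct one: Haefliger's deleted-product criterion (which, as a small attribution point, is proved in Haefliger's earlier paper ``Plongements diff\'erentiables dans le domaine stable'', not in \cite{HaeHir} itself) reduces isotopy classification in the range $2\cdot 2n>3(n+1)$, i.e.\ $n\geq 4$, to $\ZZ/2$-equivariant homotopy of secant maps, and \cite{HaeHir} then converts that equivariant datum into the section $s(f)$. But your write-up stops exactly where the theorem begins: the bijection you defer to your final paragraph --- that $f\mapsto s(f)$ identifies $[\tilde{L},S^{2n-1}]_{\ZZ/2}$ with the set of $SO(n)$-equivariant homotopy classes of maps $\OP{Fr}^+(L_x)\to V_{2n,n+1}$ --- \emph{is} the content of the classification, and you assert it rather than prove it. Concretely, three steps are missing: (i) since the free quotient $\tilde{L}^*=\tilde{L}/(\ZZ/2)$ is an open $2n$-manifold, hence of the homotopy type of a complex of dimension at most $2n-1$, while $S^{2n-1}$ is $(2n-2)$-connected, the set $[\tilde{L},S^{2n-1}]_{\ZZ/2}$ is a nonempty torsor over $H^{2n-1}(\tilde{L}^*;\ZZ_t)$, with coefficients twisted by the double cover $\tilde{L}\to\tilde{L}^*$; (ii) the computation $H^{2n-1}(\tilde{L}^*;\ZZ_t)\cong H^{n-1}(L;Y(n))$ for closed oriented $L$; and (iii) the verification that under (ii) the difference of the secant-map classes of $f,f'$ corresponds to the difference class of the sections $s(f),s(f')$. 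Step (iii) is where the nullhomologous-pushoff normalisation of $v$ must actually enter: the fibre inclusion $S^{n-1}\to V_{2n,n+1}$ of the fibration $V_{2n,n+1}\to V_{2n,n}$ is surjective on $\pi_{n-1}$, so changing the normal field by a class in $H^{n-1}(L;\pi_{n-1}(S^{n-1}))$ changes $s(f)$ without changing $f$; for an unnormalised $v$ the section $s(f)$ is simply not an isotopy invariant. In particular nothing in your sketch establishes the implication the paper actually uses, namely that $s(f)\simeq s(f')$ forces $\tilde{f}\simeq_{\ZZ/2}\tilde{f}'$.

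There is also a concrete error in your near-diagonal analysis: you claim that extending the unit normal field over all of $L$ ``meets the normal Euler class, which is exactly why a point $x$ must be deleted''. For a closed \emph{oriented} $n$-manifold embedded in $\RR^{2n}$ --- the setting of the theorem --- the normal Euler class vanishes: the Euler number counts the zeros of a generic normal section, which equals the self-intersection number of $[L]=0$ in $H_n(\RR^{2n};\ZZ)$, an observation going back to Whitney. So a nonvanishing normal field exists over all of $L$, and the point $x$ is deleted for a different reason: $L_x$ has the homotopy type of an $(n-1)$-complex, so homotopy classes of normal fields and of sections of $\mathcal{V}_{2n,n+1}$ are governed by the single primary difference in degree $n-1$, killing the residual $\pi_n(S^{n-1})=\ZZ/2$ ambiguity present over the top cell of $L$; this is what makes the Haefliger--Hirsch field unique up to homotopy and $s(f)$ well defined. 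This misidentification does not destroy the architecture of your plan, but together with the unproven identification above it means the proposal is a correct roadmap to \cite{HaeHir} rather than a proof.
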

If $\Gamma(\mathcal{V}_{2n,n+1}|_{L_x})$ denotes the space of sections of the Stiefel manifold bundle over $L_x$ (i.e. the space of Haefliger-Hirsch maps) Haefliger and Hirsch identify
\begin{align*}\pi_0(\Gamma(\mathcal{V}_{2n,n+1}|_{L_x}))&=H^{n-1}(L;\pi_{n-1}(V_{2n,n+1}))\\
&=\begin{cases}H_1(L;\ZZ)&\mbox{ if }n\equiv 1\mod 2\\
H_1(L;\ZZ/2)&\mbox{ if }n\equiv 0\mod 2.\end{cases}\end{align*}
This is an application of obstruction theory and of the fact that $V_{2n,n+1}$ is $n-2$-connected and $\pi_{n-1}(V_{2n,n+1})=\ZZ$ (respectively $\ZZ/2$) when $n$ is odd (respectively even); see {\cite[Theorem 3.16]{Pras}} for this latter computation.

\subsection{Identifying the Haefliger-Hirsch field}

Take a metric on $L$ and let $\sigma\colon L\to S^1$ be a submersion representing the Maslov class in the sense that $\mu(f_1)=\sigma^*\left[\frac{d\theta}{2\pi}\right]$. The gradient of $\sigma$ is a nowhere-vanishing vector field $\nabla\sigma$ and under the musical isomorphism with the cotangent bundle it is identified with the 1-form $d\sigma$. Let $J$ be a compatible almost complex structure on $T^*L$ and consider $\nabla\sigma$ as a vector field defined along the zero-section. 

Let $\{L_t\colon L\to T^*L\}_{t\in(-1,1)}$ be a 1-parameter family of embedded submanifolds such that
\begin{itemize}
\item $L_0$ is just the inclusion of the zero-section,
\item $\left.\frac{d}{dt}\right|_{t=0}L_t(y)=J(L_0)_*\nabla\sigma(y)$ for all $y\in L$,
\item $\OP{im}(L_t)\cap \OP{im}(L_0)=\emptyset$ for $0\neq |t|<\epsilon$.
\end{itemize}
Then for $\epsilon>0$, $L_{\epsilon}$ is smoothly isotopic in $T^*L\setminus L$ to the inclusion of the graph of the 1-form $d\sigma$.

In the setting of Theorem \ref{unknot}, let $f_1\colon L\to\CC^n$ be a monotone Lagrangian embedding. Let $W\colon D^*_{\rho}L\to\CC^n$ be a Weinstein neighbourhood of $f_1(L)$ in $\CC^n$ and $L'$ be the image of the graph of $\rho' d\sigma$ under $W$ for some $0<\rho'<\rho$.
\begin{lma}\label{lma-monopush}
If $L$ is $K$-monotone with $\mu(f_1)=\sigma^*\left[\frac{d\theta}{2\pi}\right]$ the Lagrangian $L'$ is $K'$-monotone where
\[K'=K-2\pi\rho'<K.\]
\end{lma}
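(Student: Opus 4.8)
The plan is to compute the Maslov and symplectic area classes of $L'$ separately and then read off the monotonicity constant from their relation. Throughout I identify $L'$ with $L$ via the graph map $s_{\rho'd\sigma}\colon L\to D^*_\rho L$, $y\mapsto\rho'\,d\sigma_y$, followed by $W$, so that the embedding of $L'$ is $f'=W\circ s_{\rho'd\sigma}$ and the cohomology of $L'$ is identified with that of $L$.

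First I would dispose of the Maslov class. Since $d\sigma=\sigma^*d\theta$ is closed, each $s_{t\rho'd\sigma}$ with $t\in[0,1]$ is a Lagrangian section of $T^*L$, and provided $\rho'$ is small enough that the graph lies inside $D^*_\rho L$ the family $t\mapsto W\circ s_{t\rho'd\sigma}$ is a Lagrangian isotopy in $\CC^n$ from $f_1$ to $f'$. A Lagrangian isotopy does not change the homotopy class of the Lagrangian Gauss map, hence does not change the Maslov class, so $\mu(f')=\mu(f_1)=\sigma^*\left[\frac{d\theta}{2\pi}\right]$ under the identification above. In particular $[d\sigma]=2\pi\,\mu(f_1)=2\pi\,\mu(f')$.

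The substance of the lemma is the area class. Let $\lambda_{\mathrm{can}}$ denote the tautological Liouville form on $T^*L$, taken with the sign convention fixed in the proof of Theorem \ref{thm-avoid}. Because $W$ is a symplectic embedding, $W^*\lambda_0$ and $\lambda_{\mathrm{can}}$ are two primitives of the same symplectic form on $D^*_\rho L$, so their difference is closed. The disc bundle deformation retracts onto the zero section, so the class $[s^*(W^*\lambda_0-\lambda_{\mathrm{can}})]\in H^1(L;\RR)$ is the same for every section $s$; evaluating on the zero section $s_0$, where $s_0^*\lambda_{\mathrm{can}}=0$ and $W\circ s_0=f_1$, identifies this common class with $a(f_1)$. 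Evaluating instead on $s_{\rho'd\sigma}$ and using $s_{\rho'd\sigma}^*\lambda_{\mathrm{can}}=-\rho'\,d\sigma$ gives
\[
a(f')=[f'^*\lambda_0]=\left[s_{\rho'd\sigma}^*\lambda_{\mathrm{can}}\right]+a(f_1)=-\rho'[d\sigma]+a(f_1).
\]
Combining this with $a(f_1)=K\mu(f_1)$ and $[d\sigma]=2\pi\mu(f_1)$ yields $a(f')=(K-2\pi\rho')\mu(f')$, so $L'$ is $K'$-monotone with $K'=K-2\pi\rho'<K$ (the monotonicity constant is positive provided $\rho'<K/(2\pi)$).

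The main obstacle is the sign: the naive computation gives $s_{\rho'd\sigma}^*\lambda_{\mathrm{can}}=+\rho'\,d\sigma$ and hence $K'=K+2\pi\rho'$, the wrong direction. The correct sign is forced by the orientation convention used to build the Weinstein neighbourhood — the same convention responsible for the minus sign in front of the Liouville form when the contact form is defined in the proof of Theorem \ref{thm-avoid} — under which the graph of $+\rho'\,d\sigma$ is the \emph{inward} pushoff with smaller enclosed area. I would pin this down on the model of a round circle in $\CC$, where $W$ can be written explicitly and one checks directly whether $p=\rho'>0$ corresponds to the larger or the smaller concentric circle. Apart from the sign, the only points needing care are that the graph of $\rho'd\sigma$ remains inside $D^*_\rho L$ (so that $W$ applies and the Lagrangian isotopy is defined) and that the section-independence of $\left[s^*(W^*\lambda_0-\lambda_{\mathrm{can}})\right]$ is invoked correctly.
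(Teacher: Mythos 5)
Your proof is correct and follows essentially the same route as the paper: the Maslov class is unchanged under the Lagrangian isotopy, and the area class is computed by pulling back $W^*\lambda_0$, which differs from (minus) the canonical Liouville form by a closed form representing $a(f_1)$, to the graph of $\rho'd\sigma$. Your section-independence argument is just a repackaging of the paper's decomposition $W^*\lambda_0=\alpha+G+dF$, and your sign discussion matches the paper's convention that $\alpha$ is \emph{minus} the canonical one-form, so that $i_{\rho'd\sigma}^*\alpha=-\rho'\,d\sigma$.
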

\begin{proof}
Since $L$ and $L'$ are Lagrangian isotopic their Lagrangian Gauss maps are homotopic so the Maslov class is unchanged. We need to compute the symplectic area class
\[[\lambda_0]\in H^1(L';\RR)\cong\OP{Hom}(H_1(L';\RR),\RR).\]
Let $\alpha$ denote minus the (canonical) Liouville 1-form on $D^*_{\rho,g}L$ so that $d\alpha$ is the canonical symplectic form. Note that $W^*\lambda_0-\alpha$ is closed because the Weinstein embedding is symplectic, and its cohomology class is $[\lambda_0]\in H^1(L;\RR)$. Note that
\[H^1(D^*_{\rho,g}L;\RR)\stackrel{i^*}{\cong} H^1(L;\RR)\]
where $i\colon L\to D^*_{\rho,g}L$ is the inclusion, therefore $W^*\lambda_0=\alpha+G+dF$ where $G$ is a form representing $(i^*)^{-1}[\lambda_0]$ and $F$ is a function. Let $i_{\rho'd\sigma}\colon L'\to D^*_{\rho,g}$ be the inclusion of $L'$ into the Weinstein neighbourhood and $f'_1=W\circ i_{\rho'd\sigma}$ the embedding of $L'$ into $\CC^n$. Then
\begin{align*}
[(f'_1)^*\lambda_0]&=[i_{\rho'd\sigma}^*W^*\lambda_0]\\
&=[i_{\rho'd\sigma}^*(\alpha+G+dF)]\\
&=-\rho'[d\sigma]+K\mu(f_1)
\end{align*}
by definition of the canonical 1-form $-\alpha$ and by $K$-monotonicity. Since $[d\sigma]=2\pi\mu(f_1)$ by assumption we get monotonicity with constant
\[K'=K-2\pi\rho'.\]
\end{proof}

\begin{lma}\label{lma-HaeHir}
The vector field $v=J(f_1)_*\nabla\sigma$ restricted to the complement of a point $x\in L$ is a Haefliger-Hirsch field for $f_1$.
\end{lma}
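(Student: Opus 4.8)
The plan is to verify the two defining properties of a Haefliger--Hirsch field for $v=J(f_1)_*\nabla\sigma$ in turn: normality, which is immediate, and the nullhomology of the associated pushoff, which is where the symplectic input enters. For normality, observe that since $f_1$ is Lagrangian and $J$ is compatible with the metric $g=\omega(\cdot,J\cdot)$, for any $X,Y\in T_pL$ one has $g(J(f_1)_*X,(f_1)_*Y)=\omega((f_1)_*X,(f_1)_*Y)=0$; hence $J(f_1)_*\nabla\sigma$ is everywhere $g$-orthogonal to $f_1(L)$, i.e.\ normal. Moreover, because $\sigma$ is a submersion, $\nabla\sigma$ is nowhere vanishing, so $v$ is in fact a nonvanishing normal field defined on all of $L$. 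The cutting-off procedure near $x$ in the definition is therefore inessential here, and any two extensions of $v|_{L_x}$ produce pushoffs that are isotopic inside $\CC^n\setminus L_x$ through an isotopy supported near $x$. It thus suffices to work with the honest pushoff of $L$ along $v$.

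Next I would identify this pushoff, inside the complement of $L$, with the Lagrangian graph $L'$ of Lemma~\ref{lma-monopush}. The discussion preceding that lemma already shows that, inside $T^*L\setminus L$, the normal pushoff of the zero-section in the direction $J\nabla\sigma$ is smoothly isotopic to the graph of $d\sigma$, equivalently (after rescaling) of $\rho' d\sigma$. Transporting this through the Weinstein embedding $W\colon D^*_{\rho}L\to\CC^n$, the pushoff of $f_1(L)$ along $v$ becomes smoothly isotopic, within $W(D^*_{\rho}L)\setminus f_1(L)\subset\CC^n\setminus L$, to $L'=W(\mathrm{graph}(\rho' d\sigma))$. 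Since $\CC^n\setminus L\subseteq\CC^n\setminus L_x$, it is enough to prove that $L'$ is nullhomologous in $\CC^n\setminus L$.

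The heart of the argument is then an application of the unlinking theorem. By Lemma~\ref{lma-monopush}, $L'$ is $K'$-monotone with $K'=K-2\pi\rho'<K$, while $L'$ and $L$ are Lagrangian isotopic (the graphs of $t\rho' d\sigma$ are Lagrangian for all $t$, as $d\sigma$ is closed), so they share the same Maslov homomorphism and hence the same minimal Maslov number $N$. Consequently the infimal disc areas obey $A(f'_1)=K'N<KN=A(f_1)$. I would now invoke Theorem~\ref{unlink} with $L'$ in the role of $L_1$ and $L$ in the role of $L_2$: the hypotheses on $L_2=L$ are exactly Conditions~\ref{condition-contractible-geodesics} and \ref{condition-torsionfree} of Theorem~\ref{unknot}, and the remaining hypotheses fall on $L_1=L'\cong L$. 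If $L'$ were to link $L$, the theorem would force $A(f_1)<A(f'_1)$, contradicting $A(f'_1)<A(f_1)$. Hence $L'$ does not link $L$; that is, $L'$ is nullhomologous in $\CC^n\setminus L$, and therefore in $\CC^n\setminus L_x$, as required.

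The step I expect to be most delicate is the bookkeeping of this unlinking input. One must push off in the direction $+J\nabla\sigma$ rather than its negative, so that Lemma~\ref{lma-monopush} delivers the \emph{strict} decrease $K'<K$ and hence $A(f'_1)<A(f_1)$, placing the area inequality on precisely the side that lets the contrapositive of Theorem~\ref{unlink} bite. A secondary point needing care is to confirm that the smooth isotopy identifying the pushoff with $L'$ can be kept disjoint from $L_x$ throughout, so that it genuinely takes place in $\CC^n\setminus L_x$, and that the hypotheses imposed by Theorem~\ref{unlink} on $L_1$ transfer across the diffeomorphism $L'\cong L$; both hold, in particular, for the tori that constitute the principal application.
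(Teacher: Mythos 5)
Your proposal is correct and follows essentially the same route as the paper: identify the pushoff of $L$ along $v=J(f_1)_*\nabla\sigma$ with the graph $L'$ of $\rho'\,d\sigma$, invoke Lemma~\ref{lma-monopush} to get $K'<K$ (hence $A(f_1')<A(f_1)$), and apply Theorem~\ref{unlink} in contrapositive form to conclude that $L'$ is nullhomologous in $\CC^n\setminus L\subset\CC^n\setminus L_x$. The only point where the paper does more is in verifying the hypotheses of Theorem~\ref{unlink} on $L_1=L'$ in the stated generality: you check them only for tori, whereas the paper notes that Condition~\ref{condition-contractible-geodesics} of Theorem~\ref{unknot} already forces $L$ (hence $L'$) to be aspherical by the Lyusternik--Fet argument, so the odd-degree cohomology of the universal cover vanishes; you should include that observation if the lemma is to apply beyond the torus case.
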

\begin{proof}
As we observed above, the pushoff of $L$ along $v$ is smoothly isotopic to a $K'$-monotone Lagrangian $L'$ where $K'<K$. Since $L$ admits a metric without contractible geodesics, we know that $L$ is aspherical by Remark \ref{rmk-asph}. Therefore $L_2=L$ and $L_1=L'$ satisfy the assumptions of Theorem \ref{unlink}, so $L'$ is nullhomologous in the complement of $L$. Fix a point $x\in L$ and a disc neighbourhood $x\in D\subset L$ and let $v'$ denote the restriction of $v$ to $L_x$. Since $v$ is nowhere-vanishing, the pushoff of $L$ along $v'$ (as constructed in Definition \ref{dfn-pushoff}) is isotopic in $\CC^n\setminus(L\setminus D)$ to $L'$ which we have seen is nullhomologous in $\CC^n\setminus L$ and therefore in $\CC^n\setminus(L\setminus D)$. Therefore $v'$ is a Haefliger-Hirsch field for $L$.
\end{proof}

\subsection{Unknottedness}\label{sect-comp-prf}

Theorem \ref{unknot} follows immediately from Lemma \ref{lma-HaeHir} and the following:

\begin{lma}\label{lma-htpy-HaeHir}
Suppose that $f_1,f_2\colon L\to\CC^n$ are Lagrangian embeddings such that the Maslov class $\mu$ is represented by $\sigma^*\left[\frac{d\theta}{2\pi}\right]$ for a circle-valued function $\sigma$ with no critical points and such that $J(f_i)_*\nabla\sigma$ are the Haefliger-Hirsch fields for some compatible almost complex structure $J$. If the Lagrangian frame maps associated to $f_i$ are homotopic when restricted to $L_x$, then the respective Haefliger-Hirsch maps are homotopic.
\end{lma}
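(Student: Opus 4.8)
The plan is to realise the Haefliger--Hirsch map $s(f_i)$ as the Lagrangian frame map $\OP{Fr}(f_i)$ together with one extra piece of data that does not depend on the embedding, so that a homotopy of frame maps is transported directly to a homotopy of Haefliger--Hirsch maps. The starting observation is that, by the very definition of $s(f)$ in Section \ref{sect-HaeHir}, its first $n$ vectors recover the frame map: if $F=(e_1,\dots,e_n)$ is an oriented orthonormal frame of $T_yL$ then
\[ s(f)(F)=\big(P_{f(y)}f_*e_1,\dots,P_{f(y)}f_*e_n,\,P_{f(y)}v\big), \]
whose first $n$ entries are exactly $\OP{Fr}(f)(F)\in U(n)$, sitting inside $V_{2n,n+1}$ via the map that forgets the last vector. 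Thus $s(f)$ is a lift of $\OP{Fr}(f)$ and the only additional datum is the last vector $P_{f(y)}v$.

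Next I would compute this last vector in the unitary frame $(w_1,\dots,w_n):=\OP{Fr}(f)(F)$. Writing $v=J(f)_*\nabla\sigma$ and $\nabla\sigma=\sum_j c_j e_j$ with $c_j=d\sigma(e_j)$ (the frame being orthonormal for the metric used to form $\nabla\sigma$), and using that $P$ intertwines $J$ with multiplication by $i$ on $\CC^n$, one gets $P_{f(y)}v=i\sum_j c_j w_j$. Since $(w_1,\dots,w_n)$ is unitary, $(iw_1,\dots,iw_n)$ is an orthonormal basis of the normal space $J\cdot f_*(T_yL)$, so in this basis the normal vector has coordinates $(c_1,\dots,c_n)$, that is, after normalising it is precisely the coordinate vector $u(F):=\widehat{\nabla\sigma}(F)\in S^{n-1}$ of the unit gradient direction. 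This yields the factorisation $s(f)=\Theta\circ(\OP{Fr}(f),u)$, where
\[ \Theta\colon U(n)\times S^{n-1}\to V_{2n,n+1},\qquad \Theta(W,c)=\big(w_1,\dots,w_n,\,J(Wc)\big) \]
(with $w_j$ the columns of $W$ and $Wc=\sum_j c_j w_j$) is a fixed smooth map, and $u\colon\OP{Fr}^+(L_x)\to S^{n-1}$ is the $SO(n)$-equivariant map recording the unit gradient direction of $\sigma$. I would then check two routine points: that $J(Wc)$ is a unit vector orthogonal to each $w_j$ (so $\Theta$ indeed lands in $V_{2n,n+1}$), and that $\Theta$ is $SO(n)$-equivariant for the diagonal action $g\cdot(W,c)=(Wg,g^{-1}c)$, matching the action fixing the last vector on $V_{2n,n+1}$; both follow from $W$ being unitary and $g\in SO(n)\subset U(n)$.

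The homotopy conclusion is then immediate. Because $u$ depends only on $\sigma$ (and an auxiliary metric), not on the embedding, an $SO(n)$-equivariant homotopy $\OP{Fr}(f_1)|_{L_x}\simeq\OP{Fr}(f_2)|_{L_x}$ may be paired with the constant homotopy of $u$ and post-composed with $\Theta$ to produce an $SO(n)$-equivariant homotopy $s(f_1)\simeq s(f_2)$, as required.

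The main obstacle I anticipate is the dependence on the Riemannian metric: the map $u$, and indeed the domain $\OP{Fr}^+(L_x)$ itself, is built from the pullback metric, which differs for $f_1$ and $f_2$, so to make the two factorisations literally comparable one must know that the relevant $SO(n)$-equivariant homotopy classes are metric-independent. I would dispose of this by the standard device of passing to the bundle of all positively-oriented frames and using that $SO(n)\hookrightarrow GL_n^+(\RR)$ is a homotopy equivalence (Gram--Schmidt), together with the contractibility of the space of metrics, so that a single fixed metric may be used to define $u$ for both embeddings without altering any homotopy class. Once this reduction is in place, the factorisation $s(f)=\Theta\circ(\OP{Fr}(f),u)$ is the entire argument.
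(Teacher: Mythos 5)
Your proposal is correct and is essentially the paper's own argument: the paper likewise appends the vector $J\sum_k a(e)_k\mathcal{F}_t(e_k)$ (your $\Theta(W,c)$ with $c$ the frame-coordinates of $\nabla\sigma$) to a homotopy $\mathcal{F}_t$ of frame maps to produce the homotopy of Haefliger--Hirsch maps. Your extra care about normalisation, equivariance of $\Theta$, and metric-independence only makes explicit points the paper leaves implicit.
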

\begin{proof}
Suppose that $\mathcal{F}_t$, $t\in[1,2]$, is a homotopy of $SO(n)$-equivariant Lagrangian frame maps
\[\mathcal{F}_t\colon\OP{Fr}^+(L_x)\to U(n)\]
with $\mathcal{F}_i=\OP{Fr}(f_i)$ for $i=1,2$. For each frame $e=(e_1,\ldots,e_n)\in\OP{Fr}^+(L_x)$ define the cofficients $a(e)_k$ by $\nabla\sigma=\sum_{k=1}^na(e)_ke_k$. Now
\begin{gather*}
s(\mathcal{F}_t)\colon\OP{Fr}^+(L_x)\to V_{2n,n+1}=SO(2n)/SO(n-1),\\
s(\mathcal{F}_t)(e_1,\ldots,e_n)=(\mathcal{F}_te_1,\ldots,\mathcal{F}_te_n,J\sum_{k=1}^na(e)_k\mathcal{F}_t(e_k))
\end{gather*}
is a homotopy of $SO(n)$-equivariant maps connecting the Haefliger-Hirsch maps
\begin{gather*}
s(f_i)\colon\OP{Fr}^+(L_x)\to V_{2n,n+1}=SO(2n)/SO(n-1),\\
s(f_i)(e_1,\ldots,e_n)=((f_i)_*e_1,\ldots,(f_i)_*e_n,J(f_i)_*\nabla\sigma)
\end{gather*}
for $i=1,2$.
\end{proof}

We will now prove a more precise result which will allow us to prove Corollary \ref{maincor-unknot}. Recall from Section \ref{sect-HaeHir} that given two embeddings $f_1,f_2\colon L \to \CC^n$, there is a difference class $\epsilon(f_1,f_2) \in H^{n-1}(L;\pi_{n-1}(V_{2n,n+1}))$ which vanishes if and only if the respective Haefliger-Hirsch maps are homotopic. We begin by showing the following more general statement about this difference class.

Let $U(n)\to SO(2n)/SO(n-1)=V_{2n,n+1}$ be the map induced by the inclusion of $U(n)$ in $SO(2n)$ and let
\[u_n\colon\pi_{n-1}(U(n))\to\pi_{n-1}(V_{2n,n+1})\]
be the induced map on $(n-1)$th homotopy groups.

Suppose that $n$ is even. Let $e_{n/2} \in H^{n-1}(U(n);\ZZ)$ be the characteristic class in $U(n)$ induced by the $\tfrac{n}{2}$th Chern class in $BU(n)$ via suspension, that is by pulling back this Chern class in $H^n(BU(n),\star;\ZZ)$ to a relative class $H^n(EU(n),U(n);\ZZ)$ and mapping it by the inverse of the connecting homomorphism in the long exact sequence of the pair $(EU(n),U(n))$ to $H^{n-1}(U(n);\ZZ)$. Fixing a trivialisation of $\CC \otimes TL$, the Lagrangian frame-map induced by the respective embeddings can be represented by a map $F_i \colon L \to U(n)$. The difference class
\[ \Delta(f_1,f_2):= F_1^*(c_{n/2})-F_2^*(c_{n/2}) \in H^{n-1}(L,\ZZ)\]
is well-defined independently of the choice of trivialisation and vanishes if the Lagrangian frame maps $F_i$ are homotopic.

\begin{prp}
\label{prp-diff-class}
Suppose that $f_1,f_2\colon L\to\CC^n$ are Lagrangian embeddings such that the Maslov class $\mu(f_i)$ is represented by $\sigma_i^*\left[\frac{d\theta}{2\pi}\right]$ for a circle-valued function $\sigma_i$ with no critical points. Suppose that $J(f_i)_*\nabla\sigma_i$ are the Haefliger-Hirsch fields ($i=1,2$). Suppose moreover that $\nabla\sigma_1$ and $\nabla\sigma_2$ are both homotopic through nonvanishing vector fields to some fixed unit vector field $v$, so that $J(f_i)_*v$ are also Haefliger-Hirsch fields ($i=1,2$). Then $\epsilon(f_1,f_2)$ vanishes when $n$ is odd and
\[ \epsilon(f_1,f_2) = \Delta(f_1,f_2) \mod 2\]
when $n$ is even. Furthermore, if the Lagrangian frame maps are homotopic when restricted to the $(n-2)$-skeleton of $L$, then $\epsilon(f_1,f_2)$ vanishes for $n \geq 5$.
\end{prp}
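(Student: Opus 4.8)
The plan is to reduce the computation of $\epsilon(f_1,f_2)$ to a single universal class on $U(n)$ and then to identify that class. The starting observation is that the Haefliger--Hirsch construction appends to each frame map the \emph{same} field $\nabla\sigma$, so the two maps differ only through the frame maps: writing $Q\colon U(n)\times S^{n-1}\to V_{2n,n+1}$ for the $SO(n)$-equivariant completion $Q(F,v)=(Fe_1,\dots,Fe_n,\,J\sum_k v_kFe_k)$, we have $s(f_i)=Q\circ(\OP{Fr}(f_i),\mathbf a)$, where $\mathbf a$ records the coordinates of $\nabla\sigma$ in the frame and is common to $i=1,2$, while $Q|_{U(n)\times\{e_1\}}$ is exactly the map $u\colon U(n)\hookrightarrow SO(2n)\to V_{2n,n+1}$. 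Since $V_{2n,n+1}$ is $(n-2)$-connected, the difference class of two sections is the difference of the pullbacks of the fundamental class $\iota_V\in H^{n-1}(V_{2n,n+1};\pi_{n-1}(V_{2n,n+1}))$ (orientability of $L$ makes the coefficient system untwisted). By the Künneth theorem $Q^*\iota_V$ has no mixed terms for $n\geq 3$, so $Q^*\iota_V=\alpha\otimes 1+1\otimes c\theta$ with $\alpha=u^*\iota_V$; pulling back by $(\OP{Fr}(f_i),\mathbf a)$ and subtracting, the $\mathbf a$-dependent term cancels because $\mathbf a$ is common, and (after fixing the trivialisation of $\CC\otimes TL$ used to define $\Delta$) we are left with
\[\epsilon(f_1,f_2)=F_1^*\alpha-F_2^*\alpha,\qquad \alpha:=u^*\iota_V\in H^{n-1}(U(n);\pi_{n-1}(V_{2n,n+1})).\]

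The heart of the matter is therefore to identify the single class $\alpha$. First I would show that $\alpha$ is primitive. Writing $u=q\circ i$ with $i\colon U(n)\hookrightarrow SO(2n)$ and $q\colon SO(2n)\to V_{2n,n+1}$, the class $q^*\iota_V$ is primitive because $\iota_V$ is the bottom nonzero class of the homogeneous space $SO(2n)/SO(n-1)$ and $q$ intertwines the group multiplication with the $SO(2n)$-action, which forces primitivity by a Künneth argument; since $i$ is a homomorphism, $\alpha=i^*q^*\iota_V$ is primitive as well. When $n$ is odd the degree $n-1$ is even, and the exterior algebra $H^*(U(n);\ZZ)$ on odd generators has no nonzero primitives in even degrees; hence $\alpha=0$ and $\epsilon(f_1,f_2)=F_1^*\alpha-F_2^*\alpha=0$, which is the odd case.

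When $n$ is even, $\alpha$ is a primitive class in the odd degree $n-1$, so it lies in the rank-one subgroup generated by $e_{n/2}\bmod 2$ and equals either $0$ or $e_{n/2}\bmod 2$. Deciding between these is the main obstacle. I would settle it using the mod-$2$ relation between Stiefel--Whitney and Chern classes: under $U(n)\hookrightarrow SO(2n)$ one has $w_n=c_{n/2}\bmod 2$, and $q^*\iota_V$ is the degree-$(n-1)$ primitive of $SO(2n)$ associated with $w_n$, so that $\alpha=i^*q^*\iota_V=e_{n/2}\bmod 2$, in particular nonzero. (As a consistency check, this nonvanishing at $n=4$ is what underlies Borrelli's relatively knotted examples.) Granting $\alpha=e_{n/2}\bmod 2$, we obtain $\epsilon(f_1,f_2)=(F_1^*e_{n/2}-F_2^*e_{n/2})\bmod 2=\Delta(f_1,f_2)\bmod 2$, the even case.

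Finally, for the skeletal refinement, suppose the frame maps agree on the $(n-2)$-skeleton. When $n$ is odd the conclusion is already contained in the first part. When $n$ is even one has $\pi_{n-1}(U(n))\cong\ZZ$, so the primary difference class $d\in H^{n-1}(L;\ZZ)$ of $F_1,F_2$ is defined, and by the classical computation of Bott the generator of $\pi_{n-1}(U(n))$ carries suspended Chern number $(n/2-1)!$; hence $\Delta(f_1,f_2)=(n/2-1)!\,d$. For $n\geq 6$ the factorial $(n/2-1)!$ is even, so $\Delta\bmod 2=0$ and therefore $\epsilon(f_1,f_2)=0$. Together with the odd case this gives the vanishing for all $n\geq 5$; the excluded value $n=4$ is visible here, since $(2-1)!=1$ is odd and the reduction need not vanish.
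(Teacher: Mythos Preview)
Your reduction to $\epsilon(f_1,f_2)=F_1^*\alpha-F_2^*\alpha$ with $\alpha=u^*\iota_V$ is the same endpoint the paper reaches, though the paper gets there by restricting to the sub-bundle $\OP{Fr}^{\uparrow}(L)$ of frames with first vector $\nabla\sigma/|\nabla\sigma|$ and invoking a Frobenius-reciprocity lemma (Lemma~\ref{lma-frobenius}) to pass between $SO(n)$- and $SO(n-1)$-equivariant homotopy; your K\"unneth separation of $Q^*\iota_V$ on $U(n)\times S^{n-1}$ is morally the same but skates over how the equivariant difference class descends to $H^{n-1}(L_x)$. Two of your sub-arguments are genuinely cleaner than the paper's: for $n$ odd you observe that $\alpha$ is primitive in even degree in an exterior Hopf algebra on odd generators, hence zero, whereas the paper runs a rational Serre spectral sequence and a rank count; and for the skeletal refinement you use Bott's $(n/2-1)!$ to show $\Delta\equiv 0\bmod 2$ for even $n\geq 6$, whereas the paper proves the vanishing of $u_n\colon\pi_{n-1}(U(n))\to\pi_{n-1}(V_{2n,n+1})$ case-by-case from Kervaire's tables (Lemma~\ref{lma-htpy}).

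There is, however, a real gap in your even case. You correctly narrow $\alpha$ down to $\{0,\,e_{n/2}\bmod 2\}$ by primitivity, but your proposed way of excluding $\alpha=0$ does not work: the relation $w_n\equiv c_{n/2}\bmod 2$ only tells you what $i^*(\text{susp}\,w_n)$ is, not that $q^*\iota_V$ equals $\text{susp}\,w_n$ rather than $0$. Detecting $\alpha\neq 0$ on spherical classes is impossible for even $n\geq 6$, since the Hurewicz image of $\pi_{n-1}(U(n))$ in $H_{n-1}(U(n);\ZZ/2)$ vanishes (again by the factorial), so your consistency check at $n=4$ does not extend. The paper fills this gap by a mod-$2$ Serre spectral sequence argument for $SO(n-1)\to SO(2n)\to V_{2n,n+1}$: a dimension count shows $d_{n-1}\colon H^{n-2}(SO(n-1);\ZZ/2)\to H^{n-1}(V_{2n,n+1};\ZZ/2)$ vanishes, so $q^*\iota_V$ survives and is then identified as the unique nonzero element in $\ker\big(H^{n-1}(SO(2n);\ZZ/2)\to H^{n-1}(SO(n-1);\ZZ/2)\big)$, namely the suspension of $w_n$. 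Without this (or an equivalent argument), the identity $\epsilon=\Delta\bmod 2$ is unproved. Note, though, that the \emph{last} assertion of the proposition is unaffected: since $\alpha\in\{0,e_{n/2}\bmod 2\}$, your factorial argument gives $\epsilon=0$ for $n\geq 5$ in either case.
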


We begin by noting two lemmas.

\begin{lma}\label{lma-frobenius}
Let $L$ be an oriented manifold with a nowhere-vanishing vector field $v$. Denote by $\OP{Fr}^+(L)$ the principal bundle of oriented frames and by $\OP{Fr}^{\uparrow}(L)\subset\OP{Fr}^+(L)$ the principal $SO(n-1)$-bundle of frames whose first vector is $v/|v|$. An $SO(n)$-equivariant map $\OP{Fr}^+(L)\to V_{2n,n+1}$ restricts to an $SO(n-1)$-equivariant map $\OP{Fr}^{\uparrow}(L)\to V_{2n,n+1}$ and an $SO(n-1)$-equivariant map
\[\alpha\colon\OP{Fr}^{\uparrow}(L)\to V_{2n,n+1}\]
extends uniquely to an $SO(n)$-equivariant map
\[\hat{\alpha}\colon\OP{Fr}^+(L)\to V_{2n,n+1}.\]
\end{lma}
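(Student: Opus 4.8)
The plan is to recognise this as the standard equivalence between equivariant maps and sections of an associated bundle, made possible by the fact that the nowhere-vanishing vector field $v$ reduces the structure group of $\OP{Fr}^+(L)$ from $SO(n)$ to $SO(n-1)$. Throughout, $SO(n-1)$ denotes the subgroup of $SO(n)$ stabilising the first basis vector, acting on the target $V_{2n,n+1}$ through the restriction of the $SO(n)$-action (rotation of the first $n$ vectors of an $(n+1)$-frame) used to form the associated Stiefel bundle.

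The first assertion is immediate: if $\phi\colon\OP{Fr}^+(L)\to V_{2n,n+1}$ is $SO(n)$-equivariant, then for $F\in\OP{Fr}^{\uparrow}(L)$ and $h\in SO(n-1)$ we have $F\cdot h\in\OP{Fr}^{\uparrow}(L)$ and $\phi(F\cdot h)=\phi(F)\cdot h$, so the restriction $\phi|_{\OP{Fr}^{\uparrow}(L)}$ is $SO(n-1)$-equivariant.

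For the extension I first record that $v$ exhibits $\OP{Fr}^{\uparrow}(L)$ as a reduction of structure group, so that the multiplication map gives an isomorphism
\[\OP{Fr}^{\uparrow}(L)\times_{SO(n-1)}SO(n)\xrightarrow{\ \cong\ }\OP{Fr}^+(L),\qquad [F_0,g]\mapsto F_0\cdot g.\]
Concretely, every oriented orthonormal frame $F$ can be written $F=F_0\cdot g$ with $F_0\in\OP{Fr}^{\uparrow}(L)$ and $g\in SO(n)$: since the fibre of $\OP{Fr}^{\uparrow}(L)$ is nonempty and $SO(n)$ acts transitively on the fibre of $\OP{Fr}^+(L)$, one can choose $g$ so that $F\cdot g^{-1}$ has first vector $v/|v|$. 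Two such presentations differ by $(F_0,g)\mapsto(F_0\cdot h,\,h^{-1}g)$ with $h\in SO(n-1)$. I then define
\[\hat{\alpha}(F_0\cdot g):=\alpha(F_0)\cdot g.\]
The crucial point is well-definedness: if $F_0\cdot g=F_0'\cdot g'$, writing $F_0'=F_0\cdot h$ and $g'=h^{-1}g$ with $h\in SO(n-1)$ gives
\[\alpha(F_0')\cdot g'=\alpha(F_0\cdot h)\cdot(h^{-1}g)=\bigl(\alpha(F_0)\cdot h\bigr)\cdot(h^{-1}g)=\alpha(F_0)\cdot g,\]
using exactly the $SO(n-1)$-equivariance of $\alpha$. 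Setting $g=\id$ shows $\hat{\alpha}$ restricts to $\alpha$, the formula makes $SO(n)$-equivariance manifest, and smoothness follows because the presentation $F=F_0\cdot g$ can be chosen to depend smoothly on $F$ locally.

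Uniqueness is forced, since any $SO(n)$-equivariant extension $\phi$ must satisfy $\phi(F_0\cdot g)=\phi(F_0)\cdot g=\alpha(F_0)\cdot g$. I expect the only substantive step to be the well-definedness check, which is precisely where one uses that the $SO(n-1)$-action on $V_{2n,n+1}$ is the restriction of the $SO(n)$-action; everything else is formal. Equivalently, the whole lemma may be packaged as the canonical isomorphism of associated bundles $\OP{Fr}^+(L)\times_{SO(n)}V_{2n,n+1}\cong\OP{Fr}^{\uparrow}(L)\times_{SO(n-1)}V_{2n,n+1}$, under which the two classes of equivariant maps correspond to sections on either side.
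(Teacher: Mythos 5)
Your proposal is correct and is essentially the paper's own argument: the paper likewise defines $\hat{\alpha}(e)=\phi\,\alpha(\phi^{-1}e)$ where $\phi\in SO(n)$ is chosen so that $\phi^{-1}e$ lies in $\OP{Fr}^{\uparrow}(L)$, which is exactly your $\hat{\alpha}(F_0\cdot g)=\alpha(F_0)\cdot g$ up to notation. You simply spell out the well-definedness, equivariance, and uniqueness checks that the paper leaves implicit.
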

\begin{proof}
The map $\hat{\alpha}$ is given by
\[\hat{\alpha}(e)=\phi\alpha(\phi^{-1}e)\]
where $e\in SO(n)=\OP{Fr}_p^+(L)$ is a frame at $p\in L$ and $\phi$ is an element of $SO(n)$ for which $\phi^{-1}e$ has $v/|v|$ as its first vector.
\end{proof}

\begin{lma}\label{lma-htpy}
Suppose that $n\geq 5$. Let $p\colon U(n)\to SO(2n)/SO(n-1)=V_{2n,n+1}$ be the map induced by the inclusion of $U(n)$ in $SO(2n)$. Then the induced map
\[u_n\colon\pi_{n-1}(U(n))\to\pi_{n-1}(V_{2n,n+1})\]
vanishes.
\end{lma}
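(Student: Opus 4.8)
The plan is to split on the parity of $n$ and, in the hard (even) case, to reduce the computation of $u_n$ to the classical determination of the Hurewicz image of the unitary group, where a factorial appears and controls everything.

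The case $n$ odd is immediate. Then $n-1$ is even and lies in the stable range $n-1\leq 2n-1$, so Bott periodicity gives $\pi_{n-1}(U(n))\cong\pi_{n-1}(U)=0$. Hence $u_n$ is the zero homomorphism for trivial reasons, and the hypothesis $n\geq 5$ is not even used here.

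The substantive case is $n$ even, where $\pi_{n-1}(U(n))\cong\pi_{n-1}(U)=\ZZ$ while, as recalled in the excerpt from \cite{Pras}, the target $V_{2n,n+1}$ is $(n-2)$-connected with $\pi_{n-1}(V_{2n,n+1})=\ZZ/2$. My key observation is that $u_n$ must vanish for reasons that involve only the connectivity of the target and a divisibility property of the source, and scarcely the specific map $p$ at all. Since $V_{2n,n+1}$ is $(n-2)$-connected, the Hurewicz homomorphism $\pi_{n-1}(V_{2n,n+1})\to H_{n-1}(V_{2n,n+1};\ZZ)=\ZZ/2$ is an isomorphism. By naturality of the Hurewicz map applied to $p$, evaluating $u_n$ on a generator $\alpha$ of $\pi_{n-1}(U(n))$ amounts to evaluating $p_*$ on the integral Hurewicz image $h(\alpha)\in H_{n-1}(U(n);\ZZ)$.

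Next I would feed in the classical fact that the Hurewicz image of a generator of $\pi_{2k-1}(U(n))=\ZZ$ (here $2k-1=n-1$, so $k=n/2$) is $(k-1)!$ times a generator of the primitive subgroup of $H_{2k-1}(U(n);\ZZ)$. Equivalently, clutching turns $\alpha$ into a complex vector bundle on $S^{n}=S^{2k}$ whose Chern class $c_{n/2}$ evaluates to $\pm(n/2-1)!$ on the fundamental class — the usual divisibility statement for $\widetilde{K}^{0}(S^{2k})$ coming from Bott periodicity and the top term of the Chern character. Because $\alpha$ is spherical, $h(\alpha)$ is primitive, so $h(\alpha)=\pm(n/2-1)!\,g$, where $g$ is the primitive generator dual to the indecomposable class in $H^{n-1}(U(n);\ZZ)$. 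Therefore $u_n(\alpha)=(n/2-1)!\cdot p_*(g)$ inside $\ZZ/2$.

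Finally, for $n\geq 6$ (the only even values allowed by $n\geq 5$) we have $n/2-1\geq 2$, so $(n/2-1)!$ is even and $u_n(\alpha)=0$ regardless of the value of $p_*(g)$; hence $u_n=0$. I regard the identification of the factorial $(n/2-1)!$ — rather than just ``some integer'' — as the one genuinely non-formal input, and I would quote Bott's instability computation for it rather than reprove it. It is worth noting that the argument collapses exactly at $n=4$, where $(n/2-1)!=1!=1$ is odd and $u_4$ can indeed be nonzero; this is consistent with, and explains, the paper's standing exclusion of the case $n=4$.
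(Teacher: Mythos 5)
Your proof is correct, but it takes a genuinely different route from the one in the paper. The paper factors $u_n$ through $\pi_{n-1}(SO(2n))$ and studies the map $\Upsilon_n\colon\pi_{n-1}(SO(2n))\to\pi_{n-1}(V_{2n,n+1})$ via the long exact sequence of the fibration $SO(n-1)\to SO(2n)\to V_{2n,n+1}$, quoting Kervaire's tables to show that $\Upsilon_n$ has nontrivial cokernel (hence vanishes, the target being $\ZZ/2$) for all even $n\geq 5$ except $n=8$, which is then handled by the separate observation that $\pi_7(U(8))\to\pi_7(SO(16))$ is multiplication by $2$. You instead push everything through the Hurewicz homomorphism: since $V_{2n,n+1}$ is $(n-2)$-connected, $\pi_{n-1}(V_{2n,n+1})\to H_{n-1}(V_{2n,n+1};\ZZ)$ is an isomorphism, the spherical class $h(\alpha)\in H_{n-1}(U(n);\ZZ)$ is primitive and hence $(n/2-1)!$ times a generator by Bott's divisibility theorem (the statement that $c_k$ of a generator of $\widetilde{K}^0(S^{2k})$ is $\pm(k-1)!$), and $(n/2-1)!$ is even once $n\geq 6$. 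Your argument is uniform in $n$, needs no homotopy-group tables and no special case at $n=8$, and cleanly explains why the lemma must fail at $n=4$ (where $1!$ is odd); the price is that it invokes the deeper input of Bott integrality, and it proves only that the composite $u_n$ dies, whereas the paper's argument additionally shows (except at $n=8$) that already $\Upsilon_n=0$. Both arguments are sound, and yours would serve as a valid, arguably tidier, replacement for the even case.
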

\begin{proof}
When $n>1$ we have
\[\pi_{n-1}(U(n))=\begin{cases}
0&\mbox{ if }n\equiv 1\mod 2\\
\ZZ&\mbox{ if }n\equiv 0\mod 2
\end{cases}\]
by Bott periodicity. So when $n$ is odd, $u_n$ vanishes automatically.

The map $u_n$ factors as
\[\pi_{n-1}(U(n))\to\pi_{n-1}(SO(2n))\stackrel{\Upsilon_n}{\longrightarrow}\pi_{n-1}(V_{2n,n+1}).\]
The map $\Upsilon_n\colon\pi_{n-1}(SO(2n))\to\pi_{n-1}(V_{2n,n+1})$ lives in the long exact sequence
\[\pi_{n-1}(SO(2n))\stackrel{\Upsilon_n}{\longrightarrow}\pi_{n-1}(V_{2n,n+1})\to\pi_{n-2}(SO(n-1))\to\pi_{n-2}(SO(2n))\]
Assume $n$ is even. Since $\pi_{n-1}(V_{2n,n+1})=\ZZ/2$ we see that $\Upsilon_n$ vanishes if its cokernel is nontrivial. When $n\equiv 2\mod 8$, using the tables on the first and second pages of \cite{Ker}, we get
{\footnotesize \[\begin{CD}
\pi_{n-1}(SO(2n))@>{\Upsilon_n}>>\pi_{n-1}(V_{2n,n+1})@>>>\pi_{n-2}(SO(n-1))@>>>\pi_{n-2}(SO(2n))\\
@| @| @| @|\\
\ZZ @>>> \ZZ/2 @>>> \ZZ/2\times\ZZ/2 @>>>\ZZ/2
\end{CD}\]}

\noindent and it is clear that $\Upsilon_n$ has nontrivial cokernel. Note that when $n$ is not congruent to 2 modulo 8, the rightmost term vanishes by Bott periodicity and hence the cokernel of $\Upsilon_n$ is precisely the group $\pi_{n-2}(SO(n-1))$. From the tables in \cite{Ker}, we see that $\pi_{n-2}(SO(n-1))$ is always nontrivial unless $n=2,4,8$.

When $n=8$, $\pi_7(SO(16))=\ZZ$ by Bott periodicity and the map to $\pi_7(V_{16,9})$ is surjective. However the map $\ZZ=\pi_7(U(8))\to\pi_7(SO(16))=\ZZ$ is multiplication by 2, so the composite $\pi_7(U(8))\to\pi_7(V_{16,9})$ is trivial, which is what we wanted to show.

To see that the map $\ZZ=\pi_7(U(8))\to\pi_7(SO(16))=\ZZ$ is given by multiplication by 2 note that it arises in the exact sequence
\[\cdots\to\pi_7(U(8))\to\pi_7(SO(16))\to\pi_7(SO(16)/U(8))\to\pi_6(U(8))\to\cdots\]
and that $\pi_6(U(8))=0$ by Bott periodicity and $\pi_7(SO(16)/U(8))=\ZZ/2$.
\end{proof}

\begin{proof}[Proof of Proposition \ref{prp-diff-class}]
Consider the sub-fibre bundle $\OP{Fr}^{\uparrow}(L_x) \subset \OP{Fr}^{+}(L_x)$ of the frame bundle which consists of frames whose first basis vector points in the direction of $v$. Observe that this is naturally a principal $SO(n-1)$-bundle.

The Haefliger-Hirsch maps
\begin{gather*}
s(f_i)\colon\OP{Fr}^+(L_x)\to V_{2n,n+1}=SO(2n)/SO(n-1),\\
s(f_i)(e_1,\ldots,e_n)=(e_1,\ldots,e_n,J(f_i)_*v)
\end{gather*}
which are $SO(n)$-equivariant maps, restrict to $SO(n-1)$-equivariant maps
\[s(f_i)^{\uparrow} \colon\OP{Fr}^{\uparrow} (L_x)\to  V_{2n,n+1}.\]

Since the first vector is $e_1=v$, the restrictions $s(f_i)^{\uparrow}$ factorise as
\[ s(f_i)^{\uparrow}=p \circ \OP{Fr}(f_i)^{\uparrow},\]
where
\[\OP{Fr}(f_i)^{\uparrow} \colon \OP{Fr}^{\uparrow}(L_x) \to U(n)\]
is the restriction of the Lagrangian frame map, and where the projection
\[ p \colon U(n) \to SO(2n)/SO(n-1) =V_{2n,n+1}\]
is induced by the inclusion $U(n) \subset SO(2n)$.

Furthermore, for a fixed choice of complex trivialisation of $\CC \otimes TL$, the Lagrangian frame maps are expressed as maps
\[ F_i \colon L \to U(n).\]
Using the $SO(n-1)$-equivariant map $F \colon \OP{Fr}^{\uparrow}(L) \to U(n)$ which identifies a given frame with a matrix representing the complexified frame relative the above trivialisation, we can write
\[\OP{Fr}(f_i)^{\uparrow}=F_i\cdot F.\]
Here $\cdot$ denotes multiplication of $U(n)$-matrixes.

Recall that the spaces $V_{2n,n+1}$ are $(n-2)$-connected and that
\[\pi_{n-1}(V_{2n,n+1})=Y(n)=\begin{cases} \ZZ/2, & n\ \text{even},\\
\ZZ, & n\ \text{odd}.
\end{cases},\]
The Hurewicz isomorphism implies that, $H^{n-1}(V_{2n,n+1};\ZZ)\cong Y(n)$. Fix a generator $g_n\in H^{n-1}(V_{2n,n+1};\ZZ)$.

We start by showing
\[\epsilon(f_1,f_2)=(p \circ F_1)^*(g_n)-(p \circ F_1)^*(g_n).\]

By Lemma \ref{lma-frobenius}, the two $SO(n)$-equivariant maps $s(f_i)$ are $SO(n)$-equi\-var\-ian\-tly homotopic if and only if the $SO(n-1)$-equivariant maps
\[s(f_i)^{\uparrow}=p\circ(F_i\cdot F)=\rho_F(p\circ F_i)\]
are $SO(n-1)$-equivariantly homotopic, where $\rho_M$ denotes the action on $V_{2n,n+1}$ induced by multiplication on the right by a matrix $M$. This happens if and only if the maps $p \circ F_i$ are homotopic. The obstruction to the problem of finding a homotopy between the two maps
\[p \circ F_i \colon L_x \to V_{2n,n+1}\]
is given by $(p \circ F_1)^*(g_n)-(p \circ F_1)^*(g_n)$, which implies the statement.

When $n$ is odd, $H^{n-1}(V_{2n,n+1};\ZZ)\cong\ZZ$ and $H^{n-1}(U(n);\ZZ)$ are both torsionfree, so we can tensor them by $\mathbf{Q}$ and get
\[p^*\colon H^{n-1}(V_{2n,n+1};\mathbf{Q})\to H^{n-1}(U(n);\mathbf{Q})\]
which factors through $H^{n-1}(SO(2n);\mathbf{Q})$.

\begin{figure}[htb]\label{spec-seq-so}
{\begin{center}\scriptsize
\begin{tikzpicture}
  \matrix (n) [matrix of math nodes,
    nodes in empty cells,nodes={minimum width=3ex,
    minimum height=2ex,outer sep=-2pt},
    column sep=2ex,row sep=1ex]{
H^0(V_{2n,n+1};H^{n-1}(SO(n-1);\mathbf{F})) & 0 & \cdots & 0 & H^{n-1}(V_{2n,n+1};H^{n-1}(SO(n-1);\mathbf{F}))\\
H^0(V_{2n,n+1};H^{n-2}(SO(n-1);\mathbf{F})) & 0 & \cdots & 0 & H^{n-1}(V_{2n,n+1};H^{n-2}(SO(n-1);\mathbf{F}))\\
\vdots & 0 & & 0 & \vdots\\
H^0(V_{2n,n+1};H^0(SO(n-1);\mathbf{F})) & 0 & \cdots & 0 & H^{n-1}(V_{2n,n+1};H^0(SO(n-1);\mathbf{F}))\\};
\draw[-stealth] (n-2-1.south east) -- (n-4-5.north west);
\end{tikzpicture}
\end{center}}
\caption{Part of the $E_{n-1}$-page of the spectral sequence for the fibration $SO(n-1)\to SO(2n)\to V_{2n,n+1}$. The coefficient field $\mathbf{F}$ is either $\mathbf{Q}$ if $n$ is odd or $\mathbf{Z}/2$ if $n$ is even.}
\end{figure}

By looking at the spectral sequence (see Figure \ref{spec-seq-so}) of the fibration
\[SO(n-1)\to SO(2n)\to V_{2n,n+1}\]
we see that the pullback of $g_n\in H^{n-1}(V_{2n,n+1};\QQ)$ to $H^{n-1}(SO(2n);\QQ)$ vanishes: it lives in the $E^{0,n-1}_{n-1}$ space of the spectral sequence and is necessarily killed by the differential coming from
\[E^{n-2,0}_{n-1}=H^{n-2}(SO(n-1);\QQ)\]
because
\[\OP{rank} H^{n-2}(SO(2n);\QQ)=\OP{rank} H^{n-2}(SO(n-1);\QQ)-1\]
since $H^{n-2}(SO(n-1);\QQ)$ additionally contains the suspension of the Euler class. As a consequence, $p^*F_i^*(g_n)=0$ when $n$ is odd and hence
\[\epsilon(f_1,f_2)=0.\]

When $n$ is even, we will show that $g_n$ pulls back to the suspension of $w_n$ in $H^{n-1}(SO(2n);\ZZ/2)$. To see this, consider the spectral sequence of the fibration $SO(n-1)\to SO(2n)\to V_{2n,n+1}$ for cohomology with coefficients in $\ZZ/2$. We get
\[H^{n-1}(SO(2n);\ZZ/2)=E^{n-1,0}_{\infty}\oplus E^{0,n-1}_{\infty}\]
and since $H^{n-1}(SO(2n);\ZZ/2)$ has the same rank as
\[E^{n-1,0}_{n-1}\oplus E^{0,n-1}_{n-1}=H^{n-1}(SO(n-1);\ZZ/2)\oplus H^{n-1}(V_{2n,n+1};\ZZ/2)\]
we know that the differential $E^{n-2,0}_{n-1}\to E^{0,n-1}_{n-1}$ vanishes and the pullback of $g_n$ to $H^{n-1}(SO(2n);\ZZ/2)$ survives. After pulling it back further to the fibre $H^{n-1}(SO(n-1);\ZZ/2)$ it vanishes. This implies that the pullback of $g_n$ to $SO(2n)$ must be the unique nonzero element in the kernel of the projection
\[H^{n-1}(SO(2n);\ZZ/2)\to H^{n-1}(SO(n-1);\ZZ/2),\]
which is precisely the suspension of $w_n$. Pulling back further to $U(n)$ gives the suspension of the $\tfrac{n}{2}$th Chern class reduced modulo 2; therefore
\[\epsilon(f_1,f_2) = \Delta(f_1,f_2) \mod 2.\]

Finally we will prove the statement about frame maps which agree on the $(n-2)$-skeleton. If the Lagrangian frame maps are homotopic when restricted to the $(n-2)$-skeleton of $L$, it follows that the same is true for the $SO(n-1)$-equivariant Lagrangian frame maps $\OP{Fr}(f_i)^{\uparrow}$. It follows that the value of
\[(p \circ F_1)^*(g_n)-(p \circ F_1)^*(g_n)\]
on an $(n-1)$-cell of $L$ can be obtained by evaluating $g_n$ on a spherical class, which moreover factorises as
\[S^{n-1} \to U(n) \to V_{2n,n+1}.\]
Lemma \ref{lma-htpy} shows that $\epsilon(f_1,f_2)$ vanishes when $n\geq 5$.
\end{proof}

\subsection{Proof of Corollary \ref{maincor-unknot}}\label{sect-cor-prf}

To prove Corollary \ref{maincor-unknot}, recall that the minimal Maslov number of a monotone torus is two (see the proof of Corollary \ref{cor-unknot}), hence the Maslov class is primitive. If we take a linear projection $\sigma$ to $S^1$ representing the Maslov class then its gradient $\nabla\sigma$ is a constant nonzero vector field on the torus. By Lemma \ref{lma-HaeHir}, the Haefliger-Hirsch field of such a monotone Lagrangian torus is $J\nabla\sigma$.

All constant nonzero vector fields on the torus are homotopic through constant nonzero vector fields. Therefore if $f_1,f_2$ are monotone Lagrangian embeddings of the torus where the Maslov classes are given by $\mu_i=\sigma_i^*\left[\tfrac{d\theta}{2\pi}\right]$, $i=1,2$, for two linear projections $\sigma_1,\sigma_2$, we have that $\nabla\sigma_1$ and $\nabla\sigma_2$ are homotopic through nonvanishing vector fields to a unit vector field $v$. Now by Proposition \ref{prp-diff-class} we see that the Haefliger-Hirsch obstruction to smooth isotopy vanishes when $n$ is odd and at least five.


\section{Generalisations}\label{sect-gen}

Recall that by Corollary \ref{cor-borrelli} our unknottedness result Theorem \ref{unknot} is already known for $S^1\times S^3$ and $S^1\times S^7$. It would be nice to recover this result using our techniques and to extend it to other products $S^1\times S^{n-1}$. Unfortunately we do not know how to do this in general. In this section we prove unknottedness in sufficiently high dimensions (starting with $S^1\times S^4$) with either a restriction on the minimal Maslov number or the condition of monotonicity.
\begin{thm}\label{unknot-tweak}
Let $n\geq 5$. Then the smooth knot type of a Lagrangian embedding $S^1\times S^{n-1}\to\CC^n$ is determined by the $SO(n)$-equivariant homotopy type of the Lagrangian frame map restricted to the complement $L_x$ of a point $x\in L$ in the two cases:
\begin{itemize}
\item $n$ is even and the minimal Maslov number is $n$,
\item $n$ is odd and the Lagrangian embedding is monotone.
\end{itemize}
\end{thm}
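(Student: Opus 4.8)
The plan is to follow the architecture of the proof of Theorem~\ref{unknot}, the only genuinely new ingredient being a replacement for Lemma~\ref{lma-HaeHir}. Write $L=S^1\times S^{n-1}$, let $\sigma\colon L\to S^1$ be the projection --- a submersion with connected fibre $S^{n-1}$ --- and let $\nabla\sigma$ be the corresponding nowhere-vanishing vector field. Since $f_1,f_2$ have $SO(n)$-equivariantly homotopic frame maps on $L_x$ they have the same Maslov class, and $H^1(L;\ZZ)\cong\ZZ$ is generated by $\sigma^*[d\theta/2\pi]$, so this single $\sigma$ serves both embeddings. The frame maps being homotopic on $L_x$ forces them to be homotopic on the $(n-2)$-skeleton of $L$ (which is just the circle factor), so by the final clause of Proposition~\ref{prp-diff-class} we will get $\epsilon(f_1,f_2)=0$ for $n\ge 5$ --- and hence the desired smooth isotopy by Theorem~\ref{thm-HaeHir} --- \emph{provided} that $v_i:=J(f_i)_*\nabla\sigma$ are the Haefliger--Hirsch fields of $f_i$. (Proposition~\ref{prp-diff-class} applies in both cases, since its homotopy-theoretic argument uses only that $\sigma$ is a submersion and that $J(f_i)_*\nabla\sigma$ are the Haefliger--Hirsch fields, not the precise multiple of $\sigma^*[d\theta/2\pi]$ represented by the Maslov class.)

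So the whole content is to re-prove Lemma~\ref{lma-HaeHir}: the pushoff $L'$ of $L$ along $v_i$ is nullhomologous in $\CC^n\setminus L$. As before, $L'$ is smoothly isotopic inside $\CC^n\setminus L$ to the graph $\tilde L$ of $\rho'd\sigma$, and the area computation in Lemma~\ref{lma-monopush} shows the pushoff strictly decreases the area of the generating disc class $\beta$, so that $A(\tilde L)<A(L)$. To conclude $[\tilde L]=0\in H_n(\CC^n\setminus L)\cong H^{n-1}(L;\ZZ)\cong\ZZ$, I would build a chain bounding a nonzero multiple of $\tilde L$ out of holomorphic discs on $\tilde L$ that avoid $L$. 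In the monotone odd case these are the degree-$d$ family of Maslov-two discs furnished by \cite{Dam,Buh}, whose interior-marked moduli space fills $\tilde L$ exactly as in the proof of Theorem~\ref{unlink}; note this is the case in which the universal cover $\RR\times S^{n-1}$ has vanishing odd cohomology, so the input from \cite{Dam} is available. In the even case there are no Maslov-two discs, and I would instead use the minimal-area Maslov-$n$ discs, cut down by $\tfrac{n-2}{2}$ generic interior incidence conditions to a moduli space of dimension $n+1$ which provides a chain in $\CC^n\setminus L$ bounding a nonzero multiple of $\tilde L$. In either case, torsionfreeness of $H^{n-1}(L;\ZZ)$ upgrades a nullhomology of $d[\tilde L]$ with $d\ne 0$ to $[\tilde L]=0$.

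The one step that does not follow from the earlier sections, and which I expect to be the main obstacle, is the disjointness from $L$ of these discs: a version of Theorem~\ref{thm-avoid} for the target $L_2=L=S^1\times S^{n-1}$. Here $L$ fails the hypotheses of that theorem, because the sphere factor carries contractible closed geodesics, namely the great circles. Consequently, in the neck-stretching along $S^*L$ the Gromov--Hofer limit of a disc meeting $L$ may now contain a component $u_{\overline{W}}$ in $\overline{W}\cong T^*L$ which meets the zero section and is asymptotic to such a contractible geodesic $\gamma$ --- precisely the configuration ruled out for free in Theorem~\ref{thm-avoid}. The plan is to exclude this by a Fredholm index count. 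The great circles form a Morse--Bott family (of dimension $2n-3$) whose Robbin--Salamon index grows linearly in $n$, and matching $\gamma$ to the component $u_{\overline{V}}$ in $\overline{V}\cong\CC^n\setminus L$ that carries the boundary on $\tilde L$ would force $\OP{ind}(u_{\overline{V}})$ below the transversality bound once the index of $\gamma$ outstrips the available Maslov budget. In the monotone odd case the budget is only $2$ and the threshold is crossed exactly for $n\ge 5$; at $n=4$ the primitive great circle on $S^3$ has index too small and the argument collapses, which is the source of the $n=4$ restriction. The even case is more delicate, since the Maslov budget $n$ is itself large, so the naive count does not close: there one must feed in the full Morse--Bott index of the orbit family together with the action of the iterated great circles to exclude every such component. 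Once these bad components are excluded the neck-stretched discs avoid $L$, the nullhomologies of the second paragraph go through, $v_i$ are Haefliger--Hirsch fields, and the theorem follows.
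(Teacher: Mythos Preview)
Your architecture matches the paper's: reduce to showing $J(f_i)_*\nabla\sigma$ is a Haefliger--Hirsch field, build a nullhomology of the pushoff $L'$ from a moduli of discs, and use neck-stretching plus an index count to keep those discs off $L$. The odd case is essentially what the paper does (Damian gives the Maslov-2 discs, and the index of the punctured limit is at most $4-n$). But the even case in your proposal has a genuine gap, and there is a transversality point you skip in both cases.

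\textbf{Even case: the cut-down moduli space.} You propose cutting the Maslov-$n$ moduli down by ``$(n-2)/2$ generic interior incidence conditions'' to dimension $n+1$. It is unclear what these conditions are: an interior point constraint in $\CC^n$ has net codimension $2n-2$, not $2$, so $(n-2)/2$ of them overcut wildly; a hyperplane constraint has net codimension $0$ and cuts nothing. More seriously, even if you produced some $n$-dimensional slice, you give no reason why its evaluation to $L'$ has nonzero degree --- and without that there is no nullhomology. The paper's substitute is sharper and not obvious: it uses $\mathcal{M}_{0,2}(\beta,J)$ cut by a single \emph{boundary} constraint $\ev_2\in\gamma$ for a generic loop $\gamma\subset L'$ in the class $\beta$ (Lemma~\ref{lma-moduli}). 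The degree of $\ev_1$ on this slice is then read off as the $E_1$-differential $H_{n-1}(L')\to H_n(L')$ in the Biran--Cornea pearl spectral sequence, which must be an isomorphism over $\ZZ$ because the sequence collapses; hence degree $\pm 1$. This spectral-sequence input is the missing idea in your even case. A side benefit is that the $\gamma$-constraint also enters the index count: with it, the expected dimension of the limiting punctured disc (plus the $\gamma$-condition) is $\leq 4-n$, so the even case closes exactly like the odd one --- your sense that ``the naive count does not close'' comes from not having this boundary constraint available.

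\textbf{Transversality of the limit.} In both parities, once you neck-stretch, the component $v=u_{\overline V}$ with boundary on $L'$ is a \emph{punctured} disc, and you need transversality for its moduli space to invoke the negative-dimension argument. Generic $J$ gives transversality only for somewhere-injective curves, and $v$ is not a priori simple. The paper handles this with a punctured-disc version of Lazzarini's decomposition (Lemma~\ref{lma-laz}): one first uses area/topology to force $v$ to be the unique $\overline V$-component with only contractible asymptotes (your sketch omits these Claims 1--2), then extracts a somewhere-injective sub-disc and argues by minimality of $\beta$ that it must be all of $v$ (Claim~3). Without this step your index bound is formal but does not exclude the curve.

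Aside from these two points, your outline is correct and aligns with the paper.
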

\begin{rmk}
Note that no monotonicity assumption is needed when the minimal Maslov number is $n>4$ because such Lagrangian embeddings are automatically monotone. This is because the first cohomology has rank one and because there exists a relative homology class of discs with strictly positive area and Maslov number between $3-n$ and $n+1$ {\cite[Theorem 2.1 and subsequent Remark 5]{BirCie}}. Since this disc is essential in relative homology its Maslov number is a nonzero multiple of $n$ by the assumption on minimal Maslov number. When $n>4$, the only nonzero multiple of $n$ between $3-n$ and $n+1$ is $n$. When $n$ is even, examples of Lagrangian embeddings satisfying the hypotheses of the theorem are well-known and can be constructed by a suitable Pol\-te\-ro\-vich surgery on the standard (Whitney) immersed exact Lagrangian sphere \cite{Pol}.

When $n$ is odd the odd-dimensional cohomology of the universal cover of $S^1\times S^{n-1}$ vanishes and hence (using monotonicity) Damian's theorem implies that the Lagrangian has minimal Maslov number 2. Moreover his theorem gives, as usual, a moduli space of holomorphic Maslov 2 discs with boundary on the Lagrangian such that the evaluation map $\mathcal{M}_{0,1}(L,\beta)\to L$ has degree one. Examples of monotone Lagrangians satisfying the hypotheses of the theorem when $n$ is odd can be constructed either by Polte\-ro\-vich surgery on the Whitney sphere as above, or by applying the construction of Audin-Lalonde-Polterovich \cite{ALP} to exact Maslov zero Lagrangian immersions of even dimensional spheres (which exist by the h-principle for exact Lagrangian immersions).
\end{rmk}
The strategy of proof is the same as for Theorem \ref{unknot}:
\subsection*{\texorpdfstring{$n=2k+2$}{n=2k+2} even}
As in Lemma \ref{lma-monopush} we use the projection $\sigma\colon S^1\times S^{n-1}\to S^1$ to find a nearby Lagrangian embedding $S^1\times S^{n-1}=L'\to\CC^n$ (the graph of $\epsilon d\sigma$) with minimal Maslov number $n$ and smaller monotonicity constant. We will pick a suitable almost complex structure $J$ and consider a suitable moduli space (see Lemma \ref{lma-moduli}) $M(\gamma,J)$ of $J$-discs with boundary on $L'$ representing the relative class $\beta\in H_2(\CC^n,L';\ZZ)\cong H_1(L';\ZZ)$ with Maslov number $2k+2$. We will see that
\begin{itemize}
\item this moduli space admits a degree one evaluation map to $L'$,
\item by choosing $J$ suitably we can assume that all the discs in this moduli space avoid $L$.
\end{itemize}
This will imply that $\nabla\sigma$ is the Haefliger-Hirsch field so that Proposition \ref{prp-diff-class} will then apply, proving Theorem \ref{unknot-tweak}. We begin by specifying the moduli space $M(\gamma,J)$.
\begin{lma}\label{lma-moduli}
Let $\gamma\colon S^1\to L'$ be a generic embedded loop representing the class $\beta\in H_1(L';\ZZ)$ and $J$ a generic almost complex structure. Let $\mM_{0,2}(\beta,J)$ denote the moduli space of $J$-discs with boundary on $L'$ representing the class $\beta$ and two boundary marked points. Write $\ev_1,\ev_2\colon\mM_{0,2}(\beta,J)\to L'$ for the evaluation maps at the two marked points. Let $M(\gamma,J)=\ev_2^{-1}(\gamma(S^1))$. Then the evaluation map
\[\ev_1|_{M(\gamma,J)}\colon M(\gamma,J)\to L'\]
has degree $\pm 1$.
\end{lma}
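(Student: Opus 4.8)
The plan is to equip $\ev_1|_{M(\gamma,J)}$ with a well-defined degree and then to recognise that degree as a single coefficient of the pearl (Floer) differential of $L'$, which the displaceability of $L'$ in $\CC^n$ forces to be $\pm 1$. First I would carry out the dimension count. For a generic almost complex structure compatible with the monotone Lagrangian $L'$, the moduli space of unparametrised $J$-discs in class $\beta$ with two boundary marked points has dimension $n+\mu(\beta)-3+2=2n-1$, using $\mu(\beta)=n$. Since $\beta$ is the minimal positive disc class (minimal Maslov number $n$), no bubbling can occur: a nonconstant disc or sphere bubble would carry Maslov index $\geq n$, leaving nonpositive index for the remaining component, and there are no nonconstant holomorphic spheres in $\CC^n$. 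Hence all discs in class $\beta$ are simple, transversality is attainable, and $\mM_{0,2}(\beta,J)$ is a closed manifold; since $L'=S^1\times S^{n-1}$ is orientable and spin, a relative spin structure orients it. For generic embedded $\gamma$ the map $\ev_2$ is transverse to $\gamma(S^1)$, so $M(\gamma,J)=\ev_2^{-1}(\gamma(S^1))$ is a closed oriented manifold of dimension $(2n-1)-(n-1)=n=\dim L'$, and $\ev_1|_{M(\gamma,J)}$ has a well-defined integer degree.

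Next I would reinterpret this degree inside the pearl complex of $L'$. By definition $\deg(\ev_1|_{M(\gamma,J)})$ is, for generic $p\in L'$, the signed count of $J$-discs in class $\beta$ with one boundary marked point on $\gamma$ and the other equal to $p$. Equipping $L'$ with a perfect Morse function (one critical point in each of the degrees $0,1,n-1,n$, with vanishing Morse differential), this is exactly the one-disc contribution $\langle\partial_1 x_1,x_n\rangle$ to the pearl differential, where $x_1$ is the index-$1$ generator whose unstable manifold represents $[\gamma]=\beta\in H_1(L')$ and $x_n$ is the top generator, whose stable manifold is the point $p$. Here I use that $\beta$ is the unique class of Maslov index exactly $n$, since $\mu\colon H_1(L')=\ZZ\to\ZZ$ sends the generator to $n$, so that the discs counted by $\partial_1$ are precisely those in class $\beta$.

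Finally, the displaceability argument computes this coefficient. Writing the pearl complex over $\ZZ[t,t^{-1}]$ with $\deg t=-n$, the differential is $\partial=\partial_0+t\,\partial_1+t^2\partial_2+\cdots$ with $\partial_j\colon H_i(L')\to H_{i+jn-1}(L')$. Since $H_*(L')$ is concentrated in degrees $0,1,n-1,n$, every $\partial_j$ with $j\geq 2$ lands above degree $n$ and vanishes, while $\partial_0=0$ by perfectness; thus $\partial=t\,\partial_1$, and $\partial_1$ has only the two components $H_0\to H_{n-1}$ and $H_1\to H_n$. Because $L'$ is a compact, hence Hamiltonianly displaceable, monotone Lagrangian in $\CC^n$, its pearl homology vanishes. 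Acyclicity of $(H_*(L')\otimes\ZZ[t,t^{-1}],t\partial_1)$ then forces both components of $\partial_1$ to be isomorphisms $\ZZ\to\ZZ$, hence $\pm 1$; in particular $\langle\partial_1 x_1,x_n\rangle=\pm 1$, which is the degree of $\ev_1|_{M(\gamma,J)}$.

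I expect the main obstacle to lie in the bookkeeping of the second and third steps: matching the paper's moduli space $M(\gamma,J)$ and its chosen orientation with the pearl-complex coefficient $\langle\partial_1 x_1,x_n\rangle$ (the sign conventions and the geometric representatives of the Morse generators), and justifying carefully that only the single disc correction $\partial_1$ survives, so that acyclicity over $\ZZ$ yields exactly $\pm 1$ rather than merely a nonzero count. The essential analytic input, namely the absence of bubbling, is however guaranteed cleanly by the minimality of $\beta$.
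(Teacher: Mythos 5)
Your proposal is correct and is essentially the paper's argument: the paper deduces the degree from the fact that the relevant $E_1$-differential of the Biran--Cornea spectral sequence must be an isomorphism over $\ZZ$ because the spectral sequence converges to the (vanishing) Floer homology of the displaceable $L'$, and your pearl-complex computation with $\partial=t\,\partial_1$ is precisely that spectral-sequence argument unpacked. The identification of $\deg(\ev_1|_{M(\gamma,J)})$ with the coefficient $\langle\partial_1 x_1,x_n\rangle$ is exactly the ``geometric interpretation of the $E_1$-differential'' the paper invokes, so there is no substantive difference in approach.
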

\begin{proof}
This is just a geometric interpretation of one of the $E_1$-differentials in the Biran-Cornea spectral sequence: the solid arrow in Figure \ref{spec-seq}.
\begin{figure}[htb]\label{spec-seq}
{\begin{center}
\begin{tikzpicture}
  \matrix (n) [matrix of math nodes,
    nodes in empty cells,nodes={minimum width=4ex,
    minimum height=2ex,outer sep=-2pt},
    column sep=2ex,row sep=1ex]{
0 & 0 & \ZZ t^{-1}\\
0 & 0 & \ZZ t^{-1}\\
0 & 0 & 0\\
0 & \ZZ & \ZZ t^{-1}\\
0 & \ZZ & \ZZ t^{-1}\\
0 & 0 & 0\\
\ZZ t & \ZZ & 0\\
\ZZ t & \ZZ & 0\\};
\draw[-stealth] (n-4-3.west) -- (n-4-2.east);
\draw[-stealth,dotted] (n-4-3.west) -- (n-3-1.south east);
\draw[-stealth,dotted] (n-5-3.west) -- (n-5-2.east);
\draw[-stealth,dotted] (n-5-3.west) -- (n-4-1.south east);
\end{tikzpicture}
\end{center}}
\caption{The $E_1$-page of the Biran-Cornea spectral sequence for a monotone Maslov 4 Lagrangian $S^1\times S^3$ in $\CC^2$ with $E_1$ (horizontal) and $E_2$ (knight's move) differentials indicated.}
\end{figure}
Since the spectral sequence must collapse at the $E_2$ stage this differential must be an isomorphism over $\ZZ$. The differential is multiplication by $t$ times the degree of the evaluation map we are interested in. Therefore this degree is $\pm 1$.
\end{proof}

The $n=2k+2$ case of Theorem \ref{unknot-tweak} will now follow from the following result.
\begin{prp}\label{prp-avoid-tweak}
There exists a loop $\gamma$ and a regular almost complex structure $J$ (obtained by neck-stretching) such that the discs in $M(\gamma,J)$ avoid $L$.
\end{prp}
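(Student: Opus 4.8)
The plan is to adapt the neck-stretching argument of Theorem \ref{thm-avoid} to the new moduli space $M(\gamma,J)$, using the fact that $L'$ has smaller monotonicity constant (equivalently, smaller infimal disc area after the pushoff) than $L$. The key structural difference from Theorem \ref{thm-avoid} is that here the relevant discs have Maslov number $n=2k+2$ rather than $2$, so $\beta$ is no longer a minimal relative homology class and I must be more careful about somewhere-injectivity and about what bubbling can occur in the limit. The geometric heart of the contradiction, however, should be the same: if some sequence of discs meeting $L$ survived the stretch, the Gromov--Hofer limit would have to contain a finite-energy plane as a component $v$ in $\overline{V}\cong\CC^n\setminus L$ (since there are no contractible Reeb orbits, the plane cannot lie in the symplectisation levels or in $\overline{W}$), and capping off this plane along its asymptotic geodesic produces a topological disc with boundary on $L$ whose area is at least $A(f_{L})$. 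Conservation of area under the limit would then force the total area of $\beta$ to be at least $A(f_{L})$, contradicting $A(L')<A(L)$.

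\textbf{The steps}, in order. First I would set up the Weinstein neighbourhood $W\colon D^*_{\rho,g}(S^{1}\times S^{n-1})\to\CC^n$ for $L$ exactly as in the proof of Theorem \ref{thm-avoid}, choosing the metric $g$ on $S^1\times S^{n-1}$ so that it has no contractible geodesics (a product of a round circle and a round sphere suffices, since geodesics on the sphere factor are great circles and those on the circle factor wind nontrivially). I would then build the neck-stretching sequence $J_t$ following \cite[Section 2.7]{CieMoh}, arranging that $J_t$ is also a small perturbation of a generic regular almost complex structure making the count in Lemma \ref{lma-moduli} valid. Second, I would argue that the constrained moduli space $M(\gamma,J)=\ev_2^{-1}(\gamma(S^1))$ only involves discs in the class $\beta$ whose area equals $A(L')$ (this is where monotonicity and minimal Maslov number $n$ are used: $\beta$ realises the infimal positive disc area for $L'$, so discs in $M(\gamma,J)$ have the smallest possible area). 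Third, assuming the proposition fails, I would extract a Gromov--Hofer limit of a sequence of constrained discs meeting $L$, identify the finite-energy plane component as above, and run the area contradiction.

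\textbf{The main obstacle} I expect is controlling the bubbling in the Gromov--Hofer limit now that $\beta$ is not a minimal homology class: because the Maslov number is $n=2k+2$ rather than $2$, disc and sphere bubbles of positive Maslov index (hence positive area) can in principle split off, and I must ensure that the component carrying the boundary constraint and the plane component together still account for an area at least $A(L')+A(L)$, or otherwise track the area bookkeeping across all levels. This is precisely the point where the hypothesis $n>4$ enters: in dimension four the expected dimension of the space of planes asymptotic to a short geodesic can be low enough that bad limiting configurations (e.g.\ a plane of low area with extra nodal components) are not ruled out by dimension counting, whereas for $n\geq 5$ the index growth forces the plane to carry the full large area of a capped disc on $L$. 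I would therefore isolate a careful SFT dimension/energy computation, invoking \cite{CieMoh} for the conservation of area \cite[Definition 2.7(e), Theorem 2.9]{CieMoh} and positivity of areas \cite[Corollary 2.11]{CieMoh}, to pin down that the plane component alone contributes area $\geq A(L)>A(L')$, delivering the contradiction and hence the proposition.
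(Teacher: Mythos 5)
There is a genuine gap, and it sits at the very first step of your plan. You assert that the product of a round circle and a round sphere has no contractible closed geodesics because ``geodesics on the sphere factor are great circles''\,---\,but a great circle in $\{t\}\times S^{n-1}$ is a closed geodesic of $S^1\times S^{n-1}$ that \emph{is} contractible whenever $n-1\geq 2$. Consequently the mechanism you borrow from Theorem \ref{thm-avoid} breaks down: the finite-energy plane component of the limit building need not lie in $\overline{V}\cong\CC^n\setminus L$ at all; it can be asymptotic to (a multiple cover of) such a great circle and live entirely inside the Weinstein neighbourhood $\overline{W}=T^*L$ or the symplectisation levels, where it contributes nothing to the area bookkeeping of \cite[Definition 2.7(e)]{CieMoh}. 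The paper says this explicitly (``it is possible that this plane lives in the Weinstein neighbourhood of $L$''), and it is precisely why the whole Fredholm-index apparatus of the appendix exists. Your area argument does correctly dispose of one sub-case\,---\,any \emph{extra} component of $u_{\overline{V}}$ compactifies to a disc with boundary on $L$ of area at least $A(f_L)=Kn>K'n=a(\beta)$, which is the paper's Claim \ref{clm-only-cpnt}\,---\,but that is not where the difficulty lies.

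The actual contradiction in the paper is of a different nature. After Claims 1--3 (no extra components in $\overline{V}$; all asymptotes of the remaining component $v$ are contractible geodesics; $v$ is somewhere injective, via the punctured-disc version of Lazzarini's theorem, Lemma \ref{lma-laz}), one computes the Fredholm index of the constrained moduli space containing $v$: a punctured disc in $V$ with boundary on $L'$, Maslov number $\mu=n$, negative punctures asymptotic to $m_i$-fold covers of contractible geodesics (whose Bott--Morse indices are computed in Lemma \ref{lma-sphere-index}), and a boundary marked point on $\gamma$. The expected dimension is at most $4-n$, which is negative exactly when $n>4$, so for generic $J_t|_V$ this moduli space is empty and $v$ cannot exist. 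Your closing paragraph gestures at a dimension count but assigns it the wrong job (forcing the plane to ``carry the full large area''); the count is about emptiness of the moduli space of the constrained component $v$ in $\overline{V}$, not about the plane's area. Incidentally, your premise that $\beta$ is not area-minimal is also off: since the minimal Maslov number of $L'$ is $n$ and $L'$ is monotone, the Maslov-$n$ class $\beta$ realises $A(f_{L'})$, and this minimality is what the paper uses to control bubbling and to upgrade Lazzarini's output to somewhere-injectivity of $v$ itself.
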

To prepare for the proof of this result, we state a version Lazzarini's decomposition theorem for punctured holomorphic discs with boundary on a Lagrangian.

\begin{lma}[Compare with \cite{Laz,Laz2}]\label{lma-laz}
Let $X$ be a symplectic manifold with strong convex or concave contact-type boundary, let $\bar{X}$ denote its completion and $L\subset X$ be a compact Lagrangian submanifold. Let $J$ be a compatible almost complex structure on $\bar{X}$ adapted to the contact structure on the ends. Suppose that $u\colon\Sigma\to\bar{X}$ is a finite-energy punctured holomorphic disc with boundary on $L$. There exists a somewhere-injective finite-energy punctured disc $v\colon\Sigma'\to\bar{X}$ with boundary on $L$ such that $v(\Sigma')\subset u(\Sigma)$.
\end{lma}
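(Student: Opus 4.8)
The plan is to adapt Lazzarini's decomposition theorem \cite{Laz,Laz2} to the punctured setting by localising his construction away from the punctures, where finite-energy asymptotics give complete control over the map. Recall that the heart of Lazzarini's argument is purely local: for a nonconstant $J$-holomorphic map he analyses the \emph{non-injective set} $\mathcal{R}(u)\subset\Sigma$, consisting of those interior points $z$ at which $u$ fails to be locally injective together with the critical points of $u$, and shows that this set is the image of a finite embedded graph. The complement $\Sigma\setminus\mathcal{R}(u)$ then breaks into finitely many open regions, on the closure of each of which $u$ restricts to a somewhere-injective holomorphic disc with boundary on $L$; in particular at least one such simple piece $v$ exists with $v(\Sigma')\subset u(\Sigma)$. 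The structure theorem for $\mathcal{R}(u)$ rests only on unique continuation, the local normal form for $J$-holomorphic maps, and the similarity principle, all of which are valid wherever $J$ is smooth and hence hold verbatim over the symplectisation ends of $\bar{X}$.

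First I would establish that the punctures do not interfere with this local analysis. By finite energy and the asymptotic convergence theorem for punctured holomorphic curves (as in \cite{BEHWZ,CieMoh}), near each interior puncture the map $u$ is asymptotic to a trivial cylinder over a closed Reeb orbit and, after removing a small punctured-disc neighbourhood, has image contained in a fixed compact subset of $\bar{X}$. Consequently $\mathcal{R}(u)$ is contained in a compact part of $\Sigma$: up to exponentially small error $u$ is, near the puncture, a $d$-fold cover of an embedded half-cylinder, so the asymptotic geometry is that of a cover of a cylinder and in particular no branch of the graph $\mathcal{R}(u)$ accumulates at the puncture. This is what allows us to run Lazzarini's combinatorial decomposition on the compact region while treating each puncture as a fixed cylindrical end carried along by whichever simple piece contains it.

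Next I would carry out the decomposition itself. Cutting $\Sigma$ along the graph $\mathcal{R}(u)$ yields finitely many components; on the closure of each, $u$ is somewhere injective by construction, and the boundary condition $u(\partial\Sigma)\subset L$ is inherited by those pieces meeting $\partial\Sigma$. I would then select one component, call its normalisation $\Sigma'$, and let $v\colon\Sigma'\to\bar{X}$ be the induced map. By the previous paragraph any puncture of $u$ lying in this piece becomes a puncture of $v$ with an asymptotic limit that is a (possibly multiply covered) closed Reeb orbit, so $v$ is again a finite-energy punctured disc, somewhere injective, with $v(\Sigma')\subset u(\Sigma)$ and boundary on $L$, as required.

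The main obstacle will be the behaviour at the punctures, and more precisely checking that cutting along $\mathcal{R}(u)$ produces pieces which are genuinely finite-energy punctured curves rather than objects with pathological ends. One must rule out that a component of the complement spirals into a puncture or acquires an asymptotic limit that is not a closed Reeb orbit; this is exactly where the exponential asymptotic estimates of \cite{BEHWZ,CieMoh} are needed, to guarantee that each piece inherits a clean cylindrical end over a Reeb orbit. A secondary technical point is to confirm that Lazzarini's structure theorem for $\mathcal{R}(u)$ remains valid for maps into the completion $\bar{X}$ with contact-type ends and a $J$ merely adapted to the contact structure on the ends, rather than into a closed manifold; since the required inputs are local this should be routine, but it must be verified near the ends where the target geometry degenerates.
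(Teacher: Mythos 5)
Your proposal follows essentially the same route as the paper: run Lazzarini's decomposition of the non-injective set verbatim on the compact part of the domain, and argue that the punctures cause no trouble because the asymptotic behaviour there is controlled. The paper's proof is exactly this one-sentence reduction. However, the specific justification you give at the punctures is not quite the right tool, and this is the only place where the lemma has any content beyond citing \cite{Laz2}. The exponential convergence estimates of \cite{BEHWZ,CieMoh} tell you that each end converges to (a cover of) a trivial cylinder over a Reeb orbit, but they do not control the \emph{relative} behaviour of two ends asymptotic to covers of the same orbit: both converge exponentially to the same cylinder, and a priori their images could intersect infinitely often, so the non-injective set could accumulate at a puncture in an uncontrolled way. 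Your blanket claim that ``no branch of the graph $\mathcal{R}(u)$ accumulates at the puncture'' is in fact false in general --- when two ends have asymptotically coinciding images the non-injective set does run into the puncture --- and what one actually needs is the dichotomy provided by Siefring's relative asymptotic formula (Corollaries 2.5 and 2.6 of \cite{Sief}, as invoked in the paper): near a puncture the map factors through a finite-energy \emph{embedded} half-cylinder, and two half-cylinders asymptotic to the same puncture are either asymptotically disjoint or have asymptotically coinciding images. With that input the graph structure of $\mathcal{R}(u)$ extends cleanly over the ends and the rest of your argument (cutting, selecting a somewhere-injective piece, checking it is again a finite-energy punctured disc with boundary on $L$) goes through as you describe.
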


We defer the proof of this lemma to Section \ref{app-laz}.

\begin{proof}[Proof of Proposition \ref{prp-avoid-tweak}]
Assume that Proposition \ref{prp-avoid-tweak} is false and that for all $\gamma$ and $J$ some disc in $M(\gamma,J)$ intersects $L$.

Fix the loop $\gamma$ in $L'$ and let $J_t$ be a neck-stretching sequence of almost complex structures for $L$. Choose a Weinstein neighbourhood of $L$ and let $V$ denote its complement. We will later choose $I=J_t|_V$ appropriately to derive a contradiction. By Lemma \ref{lma-moduli}, for all $t$ there is a $J_t$-holomorphic disc in $M(\gamma,J_t)$, that is a disc with boundary on $L'$ representing the minimal area class $\beta$ and (by assumption) passing through $L$.

We can extract a Gromov-Hofer convergent subsequence of these discs whose limit is a holomorphic building $u=u_{\overline{W}}\cup u_{\overline{S}_1}\cup\cdots\cup u_{\overline{S}_{\ell}}\cup u_{\overline{V}}$ where $u_{\overline{W}}$ denotes the component in $\overline{W}=T^*L$ (the symplectic completion of the Weinstein neighbourhood), $u_{\overline{V}}$ denotes the component in $\overline{V}=\CC^n\setminus L$ and $u_{\overline{S}_k}$ denote components in the intermediate symplectisation levels. Let $v$ be the component of $u_{\overline{V}}$ with boundary on $L'$. Note that the boundary of $v$ passes through the loop $\gamma$ by construction.
\begin{clm}\label{clm-only-cpnt}
There are no other components in $u_{\overline{V}}$.
\end{clm}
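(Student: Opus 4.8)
The plan is to show that any component of $u_{\overline{V}}$ other than $v$ would force the total symplectic area of the building to exceed the area of the class $\beta$, contradicting the conservation of area under Gromov--Hofer convergence. The guiding principle is the same area/Maslov bookkeeping that powered Theorem \ref{thm-avoid}: since $\beta$ is the minimal-area class on $L'$ (with Maslov number $n=2k+2$) and $L'$ has smaller monotonicity constant than $L$ by Lemma \ref{lma-monopush}, any ``extra'' holomorphic piece that appears in the limit must carry a definite positive quantum of area, and there simply is not enough area to spend. First I would record the conservation law: by {\cite[Definition 2.7(e), Theorem 2.9]{CieMoh}} the areas of all components of the building sum to $a(f')(\beta)=A(f')$, and by {\cite[Corollary 2.11]{CieMoh}} every component has strictly positive area.

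Next I would analyse what a second component $w$ of $u_{\overline{V}}$ could be. Its boundary (if any) lies on $L'$, and its punctures are asymptotic to closed Reeb orbits on the unit cotangent bundle of $L$, i.e.\ to closed geodesics of the chosen metric on $L=S^1\times S^{n-1}$. The key structural input here is Lemma \ref{lma-laz}: passing to a somewhere-injective sub-curve, I may assume $w$ is somewhere injective, so that transversality for the generic $I=J_t|_V$ controls its expected dimension. I would then run the dimension/Maslov count: the component $v$ already accounts for the full class $\beta$ and the degree-one constraint coming from the marked points (Lemma \ref{lma-moduli}), so any additional component $w$ must either be a closed sphere, a disc with boundary on $L'$, or a punctured plane/curve asymptotic to geodesics of $L$. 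Spheres are excluded because $\CC^n\setminus L$ is exact away from $L$ and there are no nonconstant closed holomorphic spheres of positive area in the completion; a disc with boundary on $L'$ in a nonzero class would add at least $A(f')$ to the total area, exceeding the budget; and a nonconstant punctured curve asymptotic to Reeb orbits can be capped off (as in the proof of Theorem \ref{thm-avoid}) to a topological surface with boundary on $L$, whose area is then at least the minimal disc area $A(f_1)$ of $L$. Since $A(f_1)>A(f')$ by Lemma \ref{lma-monopush} (the monotonicity constant of $L'$ is strictly smaller), this again overspends the area budget.

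The remaining case, and the one I expect to be the main obstacle, is a component $w$ whose area is positive but potentially very small: a punctured curve all of whose asymptotics are ``trivial'' in the sense that they contribute no net area after capping. This is exactly the bad limiting behaviour that the hypothesis $n>4$ is meant to exclude in Theorem \ref{mainthm-unknot-tweak}. To rule it out I would combine the no-contractible-geodesics hypothesis on $L$ (so that a genuine finite-energy plane in $\overline{V}$ caps off to a large disc on $L$, as in Theorem \ref{thm-avoid}) with the dimension count: for $n\geq 5$ the Reeb orbits on $S^*L$ have large enough index that a somewhere-injective punctured component of negative or low formal dimension cannot appear for generic $I$. Thus the only surviving possibility is that $u_{\overline{V}}=v$, which is the assertion of the claim. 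I would carry out the index computation carefully, since the interplay between the Conley--Zehnder indices of the geodesics on $S^1\times S^{n-1}$ and the Maslov contribution of $v$ is where the numerology $n=2k+2>4$ enters decisively.
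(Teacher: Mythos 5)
Your second paragraph is, in its final case, exactly the paper's proof: an extra component of $u_{\overline{V}}$ compactifies along its asymptotic geodesics to a topological surface with boundary on $L$ representing a class of positive symplectic area in $H_2(\CC^n,L;\ZZ)$, and monotonicity of $L$ forces that area to be at least the minimal disc area of $L$, which by Lemma \ref{lma-monopush} strictly exceeds the area of $\beta$, i.e.\ the entire area budget available under Gromov--Hofer convergence. Up to that point the proposal is correct and follows the paper.

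Your third paragraph, however, chases a case that cannot occur, and the machinery you invoke to close it does not belong to this claim. There is no regime of ``positive but very small'' area: by monotonicity the area homomorphism $a$ on $H_2(\CC^n,L;\ZZ)$ is $K$ times the Maslov homomorphism, so its image is a discrete subgroup of $\RR$ and every positive value it takes is bounded below by the minimal disc area of $L$. A nonconstant holomorphic component has positive area, hence caps to a nonzero class, hence to a class of area at least that minimum --- there are no ``trivial'' asymptotics contributing negligible area after capping. Consequently neither Lemma \ref{lma-laz}, nor somewhere-injectivity, nor any index computation, nor the hypothesis $n>4$ enters the proof of this claim. Those ingredients are needed later, in Claims \ref{clm-exist-somewhere-inj} and \ref{lma-exp-dim}, to exclude the genuinely dangerous degeneration, namely that the single component $v$ itself acquires punctures and the limit touches $L$ through the cotangent-bundle levels; importing them here would also be technically delicate, since the expected-dimension count applies to the somewhere-injective curve extracted by Lemma \ref{lma-laz} rather than to a possibly multiply covered component, and the constraint through $\gamma$ used in Claim \ref{lma-exp-dim} is only available for the component carrying the boundary marked point. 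In short: stop after your second paragraph.
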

\begin{proof}
Another component would compactify to give a topological surface with boundary on $L$, and would represent a relative homology class in $H_2(\CC^n,L;\ZZ)$ with nonzero area since it is holomorphic. Since the monotonicity constant of $L$ is strictly bigger than that of $L'$ the area of this component is strictly bigger than the area of $\beta$, which contradicts Lemma \ref{lma-area-conserved}.
\end{proof}
\begin{clm}
The curve $v$ has only contractible geodesics as asymptotes.
\end{clm}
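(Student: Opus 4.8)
The plan is to exploit the topology of the Gromov--Hofer limit together with Claim \ref{clm-only-cpnt}, projecting the part of the building below the top level down to $L$. Recall that $v$ is the unique component of $u_{\overline{V}}$ and that its boundary lies on $L'$. Since the neck is stretched around $L$ and $L'$ lies in $\overline{V}$, disjoint from the neck region, the boundary never approaches $S^*_{\rho',g}L$ and so no boundary breaking occurs; hence the domain of $v$ is planar with a single boundary circle $\partial v\subset L'$ and finitely many interior punctures $p_1,\dots,p_m$, each asymptotic to a closed Reeb orbit, that is, to a closed geodesic $\gamma_i$ on $L$. I want to show that each $\gamma_i$ is nullhomotopic in $L$. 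Since $\pi_1(L)=\pi_1(S^1\times S^{n-1})=\ZZ$ is abelian, it is equivalent to show $[\gamma_i]=0\in H_1(L;\ZZ)$, and I will in fact produce an explicit nullhomotopy.

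First I would record the topological structure of the limit. The domain of the whole building is obtained from the disc domain of the $u_t$ by degenerating a collection of disjoint circles to the breaking orbits, so the glued domain is again a disc. Cutting this disc along the circle $\gamma_i$ separates off a subsurface $D_i$ not containing $\partial v$; as $\gamma_i$ is a single separating circle in a genus-zero surface, $D_i$ is a disc with $\gamma_i$ as its only outer boundary. By Claim \ref{clm-only-cpnt} the only part of the building in $\overline{V}$ is $v$ itself, so the restriction of the building map to $D_i$ takes values in the lower levels, namely the symplectisation pieces $u_{S_k}\subset\RR\times S^*_{\rho',g}L$ and the bottom component $u_{\overline{W}}\subset\overline{W}=T^*L$; and since the discs $u_t$ have boundary only on $L'$, no component in $\overline{W}$ has boundary on the zero-section, so $D_i$ carries no free boundary other than $\gamma_i$. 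Next I would project everything below the top level to $L$: the maps $\RR\times S^*_{\rho',g}L\to S^*_{\rho',g}L\to L$ and $T^*L\to L$ agree at the matching asymptotic orbits between consecutive levels, each of which projects to its underlying geodesic, so they glue to a continuous map on the compactified subdomain. Restricting to $D_i$ gives a continuous map $D_i\to L$ sending $\partial D_i$ to $\gamma_i$; as $D_i$ is a disc this is a nullhomotopy of $\gamma_i$ in $L$, proving the claim.

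The main obstacle is the bookkeeping for the limit building: one must check that the glued domain really is a disc, that each puncture of $v$ bounds a genus-zero capping subsurface with no free ends other than the corresponding orbit (this is exactly where Claim \ref{clm-only-cpnt} and the absence of boundary on $L$ are used), and that the level-wise projections are mutually compatible at the breaking orbits so as to assemble into a single continuous map to $L$. Once this topological picture is in place, contractibility of the asymptotes of $v$ is immediate, and no further analytic input (area, monotonicity or regularity) is required for this particular claim.
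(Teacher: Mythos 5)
Your proof is correct and takes essentially the same route as the paper's: the paper runs the identical argument in the contrapositive, assuming a noncontractible asymptote and concluding that its genus-zero capping subbuilding would have to contribute a second component in $\overline{V}$, contradicting Claim \ref{clm-only-cpnt}. Your explicit projection of the lower levels $\overline{W}\cup S_1\cup\cdots\cup S_{\ell}$ onto $L$ is precisely what the paper means by the capping subbuilding ``gluing topologically to give a nullhomotopy of the geodesic''.
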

\begin{proof}
Assume that $v$ has a noncontractible geodesic as asymptote. Since the building as a whole has genus zero this asymptote must be the positive asymptote of another genus zero building in $\overline{W}\cup \overline{S}_1\cup\cdots\cup \overline{S}_{\ell'}\cup \overline{V}$ which glues topologically to give a plane in $\CC^n$. Since the geodesic is noncontractible this building cannot lie entirely in $\overline{W}\cup \overline{S}_1\cup\cdots\cup \overline{S}_{\ell'}$ or else it would glue topologically to give a nullhomotopy of the geodesic. Therefore there must be another component of $\overline{V}$, contradicting Claim \ref{clm-only-cpnt}.
\end{proof}
\begin{clm}\label{clm-exist-somewhere-inj}
The curve $v$ is somewhere injective.
\end{clm}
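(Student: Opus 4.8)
The plan is to obtain somewhere injectivity of $v$ directly from Lazzarini's decomposition (Lemma \ref{lma-laz}) together with the minimal Maslov number hypothesis, packaged as an area comparison. Write $A$ for the symplectic area of the class $\beta$; since $L'$ is monotone with some constant $K'$ and has minimal Maslov number $n$ (see Lemma \ref{lma-monopush}), we have $A=nK'$, which is also the minimal disc area of $L'$. The whole argument is then a squeeze of three inequalities forcing an equality whose extremal case is exactly somewhere injectivity.

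First I would apply Lemma \ref{lma-laz} to the finite-energy punctured disc $v$ in $\overline{V}\cong\CC^n\setminus L$: this produces a somewhere-injective finite-energy punctured disc $w\colon\Sigma'\to\overline{V}$ with boundary on $L'$ and $w(\Sigma')\subseteq v(\Sigma)$. Using the control of the behaviour near the punctures built into that lemma (via \cite{Sief}), the asymptotic Reeb orbits of $w$ form a sub-collection of those of $v$, and these are contractible geodesics by the preceding Claim. Capping each puncture of $w$ with a disc in $L$ bounding the corresponding contractible geodesic — such discs have zero symplectic area since $L$ is Lagrangian — yields a topological disc $\bar{w}$ with boundary on $L'$ representing a class in $H_2(\CC^n,L';\ZZ)$ whose symplectic area equals that of $w$. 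As $w$ is nonconstant and holomorphic this area is positive, so by monotonicity and the minimal Maslov number hypothesis the area of $w$ is at least $A$.

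Next I would run the area bookkeeping in the opposite direction. Conservation of area under Gromov--Hofer convergence and positivity of the areas of the building's components (\cite[Definition 2.7(e), Theorem 2.9, Corollary 2.11]{CieMoh}) give that the area of $v$ is at most the area of $\beta$, namely $A$. On the other hand, a somewhere-injective holomorphic curve is injective away from a discrete set, so the multiplicity function of $w$ equals $1$ almost everywhere on its image; since $w(\Sigma')\subseteq v(\Sigma)$ and the local multiplicity of $v$ is at least $1$ there, positivity of $\omega$ on $J$-complex lines gives that the area of $w$ is at most the area of $v$. Stringing these together yields
\[ A\;\le\;\text{area}(w)\;\le\;\text{area}(v)\;\le\;A,\]
so all three are equal. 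The equality of the areas of $w$ and $v$, combined with the image containment, forces $v(\Sigma)$ and $w(\Sigma')$ to agree up to a set of measure zero and the multiplicity of $v$ to equal $1$ almost everywhere; hence $v$ is injective almost everywhere and therefore somewhere injective.

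The hard part is the technical input of the second paragraph: ensuring that the simple piece $w$ produced by the punctured version of Lazzarini's theorem genuinely caps off to a relative class of $L'$ — that is, that its asymptotic orbits are contractible geodesics drawn from those of $v$ and that the capping contributes no area — since this is precisely where the analysis of non-injectivity near the punctures (Lemma \ref{lma-laz} and \cite{Sief}) is needed, and it is this step that the statement of Lemma \ref{lma-laz} was tailored to supply. The remaining area comparison is then routine, relying only on positivity of $\omega$ on $J$-complex lines and on the fact that somewhere-injective holomorphic curves have isolated double points.
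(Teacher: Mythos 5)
Your argument is correct and is essentially the paper's own proof: both extract a somewhere-injective punctured disc from $v$ via Lemma \ref{lma-laz}, cap its contractible asymptotes with zero-area discs in $L$, and invoke minimality of the area of $\beta$. The only difference is presentational — you run the area comparison as a squeeze forcing equality, where the paper derives a strict inequality and a direct contradiction when the simple piece is not a reparametrisation of $v$.
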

\begin{proof}
By Lemma \ref{lma-laz} we can extract a somewhere injective punctured disc $w'$ whose image is a subset of $w$. Suppose that $w'$ is not just a reparametrisation of $w$. Since the asymptotes are contractible geodesics in $L$ we can cap them off with discs in $L$ with no symplectic area and obtain a topological disc in $\CC^n$ with boundary on $L$ whose area is strictly smaller than that of $\beta$, which contradicts minimality of $\beta$.
\end{proof}
Note that Claim \ref{clm-exist-somewhere-inj} applies to all finite-energy punctured discs in the same moduli space as $v$: we only use the fact that the asymptotes are contractible geodesics and that the result of topologically capping these asymptotes is homologous to $\beta$.

In the standard way \cite{Dra}, we can achieve transversality for moduli spaces of somewhere-injective finite-energy punctured discs by perturbing $I=J_t|_V$. In particular the moduli space $S$ of punctured discs containing $v$ is smooth and of the expected dimension. Similarly, if we equip punctured discs with a boundary marked point, we can assume that the resulting evaluation map $\ev\colon S\to L'$ is transverse to $\gamma$.
\begin{clm}\label{lma-exp-dim}
When $n>4$ the expected dimension of $\ev^{-1}(\gamma)\subset S$ is negative. In particular, when $I$ is chosen generically this set is empty.
\end{clm}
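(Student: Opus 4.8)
The plan is to read off the expected dimension of $S$ from a Fredholm index computation and then take the transverse preimage of $\gamma$, so that the whole claim reduces to a single numerical count. First I would pin down the asymptotics of $v$. Its negative punctures are asymptotic to closed Reeb orbits on $S^*L$, i.e. to closed geodesics of the product metric on $L=S^1\times S^{n-1}$, and by the preceding claim these are contractible; hence each is constant on the $S^1$ factor and projects to a (possibly iterated) great circle on $S^{n-1}$. The simple great circles form a Morse--Bott family diffeomorphic to the Grassmannian of oriented $2$-planes in $\RR^n$, of dimension $2n-4$, and a simple great circle has Morse index $n-2$ as a closed geodesic. Capping each puncture by a hemisphere in $S^{n-1}\subset L$, which carries zero symplectic area, converts $v$ into a topological disc $\bar v$ with boundary on $L'$ representing $\beta$, of Maslov number $n$.

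Next I would feed these data into the index formula for somewhere-injective punctured discs with boundary on a Lagrangian and Morse--Bott asymptotics; this is legitimate because transversality for exactly such discs was arranged in the previous paragraph by perturbing $I=J_t|_V$, and $v$ is somewhere injective by Claim \ref{clm-exist-somewhere-inj}. The expected dimension of $S$ is assembled from the disc contribution $(n-3)$, the Maslov number $n$ of the capped class $\beta$, and one negative contribution per puncture built from the Conley--Zehnder data of the asymptotic geodesic together with the dimension $2n-4$ of its Morse--Bott family. The bookkeeping is arranged so that a single simple great-circle puncture (together with the boundary marked point) yields $\dim S=3$; adding further punctures, or allowing iterated great circles whose Morse index grows, only introduces additional negative terms, so $\dim S\leq 3$ in every case.

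Finally, since $\ev\colon S\to L'$ has been made transverse to the embedded loop $\gamma(S^1)$, which has codimension $n-1$ in the $n$-manifold $L'$, the preimage has expected dimension
\[\dim\ev^{-1}(\gamma)=\dim S-(n-1)\leq 3-(n-1)=4-n,\]
which is negative precisely when $n>4$. Hence for generic $I$ the set $\ev^{-1}(\gamma)$ is empty, which is the contradiction driving the proof of Proposition \ref{prp-avoid-tweak}. This computation also pinpoints why $n=4$ must be excluded: there the expected dimension is exactly $0$, so the set need not be empty.

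The main obstacle is the index bookkeeping in the second step: one must get the Morse--Bott contribution of a great-circle asymptote exactly right, so that the total lands on $\dim S=3$ and the threshold falls precisely at $n=4$ rather than at some nearby value. This requires care with the orientation conventions for the Conley--Zehnder index in the degenerate (Morse--Bott) situation, with the way the $(2n-4)$-dimensional family of asymptotic geodesics enters the count, and with verifying rigorously that multiply-covered orbits and multi-punctured configurations genuinely only decrease the dimension, so that the single simple puncture is the worst case.
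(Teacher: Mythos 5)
Your proposal follows the paper's proof essentially verbatim: both apply the Morse--Bott index formula \eqref{eq-index-form} with $\mu=n$, observe that the worst case is a single negative puncture asymptotic to a simply covered great circle (giving expected dimension $3$ once the boundary marked point is added), and then subtract the codimension $n-1$ of $\gamma$ to land on $4-n<0$ for $n>4$. The only divergence is in the bookkeeping you yourself flag as delicate: the paper feeds the nullity $\nu(\gamma)=n$ from Lemma \ref{lma-sphere-index} into Bourgeois's formula rather than the dimension $2n-4$ of the space of unparametrised oriented great circles that you quote, though this does not affect your (correct) final count.
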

\begin{proof}
Using Equation \eqref{eq-index-form} from Section \ref{sect-index-form}, the expected dimension formula for punctured discs in $V$ with $s_-$ negative punctures asymptotic to Reeb orbits, the $i$th of which covers a contractible geodesic in $S^{n-1}$ with multiplicity $m_i$, is
\[(n-3)(1-s_-)+\mu-\sum_{i=1}^{s_-}(2m_i-1)(n-2)\]
Since $\mu=n$, the expected dimension is:
\[2n-3-\sum_{i=1}^{s_-}\left(n-3+(2m_i-1)(n-2)\right)\]
If we add a marked point on the boundary and require this to pass through the codimension $n-1$ loop $\gamma$ then the expected dimension becomes
\[2n-3-\sum_{i=1}^{s_-}\left(n-3+(2m_i-1)(n-2)\right)+1-(n-1)\leq 4-n\]
with equality if and only if $s_-=1$ and $m_1=1$. When $n>4$ this implies the claim.
\end{proof}
Since $v$ is supposed to belong to this empty moduli space we get a contradiction. This completes the proof of Proposition \ref{prp-avoid-tweak} and therefore the proof of Theorem \ref{unknot-tweak} in the case $n=2k+2$.
\end{proof}

\subsection*{\texorpdfstring{$n=2k+1$}{n=2k+1} odd}

In this case Damian's theorem from \cite{Dam} implies that, for a regular $J$, the evaluation map
\[\ev\colon\mathcal{M}_{0,1}(\beta,J)\to L\]
(from the moduli space of $J$-discs with boundary on $L$ representing the class $\beta=[S^1]\times\{\star\}\in H_1(L;\ZZ)$ and having one boundary marked point) has nonzero degree (in fact degree $\pm 1$). We argue in the usual way to prove that the vector field $\partial_{\theta}$ ($\theta$ being the coordinate on $S^1$) is a Haefliger-Hirsch field: push $L$ off along $J\partial_{\theta}$ to obtain a Lagrangian $L'$ with smaller monotonicity constant and study the moduli space of discs with boundary on $L'$ (for which the corresponding evaluation map still has nonzero degree). Assuming that for every $J$ there is a $J$-disc on $L'$ in the class $\beta$ which intersects $L$ we use a neck-stretching sequence $J_t$ (stretching around $L$) and extract a Gromov-Hausdorff convergent subsequence of $J_t$-discs which intersect $L$. One component is a plane and it is possible that this plane lives in the Weinstein neighbourhood of $L$. As in the case $n=2k+2$ we will argue that this cannot occur for generic neck-stretching sequences provided that $n>4$. Claims 1-3 still apply and we just need to understand what replaces Claim 4.

By Claims 1-3, the part $u_{\overline{V}}$ of the limit building living in the complement of $L$ consists of a single punctured disc with boundary on $L'$ and having contractible geodesics as asymptotes. The expected dimension for moduli spaces of such discs is
\[(n-3)(1-s_-)+\mu-\sum_{i=1}^{s_-}(2m_i-1)(n-2)\]
where now $\mu=2$ and $m_i$ is the number of times the $i$th Reeb orbit wraps around the underlying simple geodesic. The expected dimension is therefore
\[n-1-\sum_{i=1}^{s_-}(n-3+(2m_i-1)(n-2))\]
which is at most $n-1-(n-3+n-2)=4-n$ in the worst case $s_-=1$, $m_1=1$. If $n>4$ this is negative so this kind of breaking is generically prohibited.

This completes the proof of Theorem \ref{unknot-tweak} in the case $n=2k+1$. \qedhere

\subsection{Proof of Corollary \ref{maincor-sphere-unknot}}

Let $f_1,f_2\colon L=S^1 \times S^{n-1} \to \CC^n$ be two Lagrangian embeddings which both satisfy one of the assumptions in Theorem \ref{unknot-tweak}. Let $\sigma$ be the projection onto the $S^1$-factor. Since $n>2$, the standard cell decomposition of $L$ has $(n-2)$-skeleton $S^1 \times \{ pt \}$. If $n$ is even then the Maslov class is $\pm n[d\sigma]$ by assumption. If $n$ is odd then by monotonicity and Damian's theorem \cite{Dam} we know that the Maslov class is $\pm 2[d\sigma]$. In either case, by reparametrising with a reflection of $S^1$ if necessary, we can assume that the Maslov class of $f_1$ and the Maslov class of $f_2$ agree, which ensures that the Lagrangian frame maps are homotopic when restricted to the $(n-2)$-skeleton of $L$.

By Theorem \ref{unknot-tweak}, Proposition \ref{prp-diff-class} applies, in particular the last statement showing that the difference class $\epsilon(f_1,f_2)$ vanishes. This finishes the proof of Corollary \ref{maincor-sphere-unknot}.

\section{Constructing knotted Lagrangian tori}\label{sect-knots}

When $n$ is even we will construct smoothly non-isotopic monotone
Lagrangian tori\footnote{The construction given here has some
errors. See the erratum at the end.}.

Consider the Clifford (product) torus. If we use the trivialisation of $TT^n$ coming from its structure as a Lie group then we get a trivialisation of $T_{\CC}T^n$ and with respect to this trivialisation the Lagrangian frame map $T^n\to U(n)$ of the Clifford torus is just just $i$ times the inclusion of a maximal torus. For the inclusion of a maximal torus, the suspension of the Chern class $c_{n/2}$ is nontrivial and not divisible by two.

Using the h-principle for exact Lagrangian immersions, let $g$ be an exact Lagrangian immersion whose Lagrangian frame map is nullhomotopic. Apply the Audin-Lalonde-Polterovich construction to $g$ to obtain an embedded Lagrangian diffeomorphic to $S^1\times T^{n-1}$ in $\CC^n$ which is monotone by exactness of the immersion and by the way we have chosen the Gauss map. With respect to the same trivialisation of $TT^n$ the suspension of $c_{n/2}$ vanishes.

By construction the difference class between these two monotone Lag\-ran\-gian embeddings is nonzero so they are not smoothly isotopic. Analogous examples of smoothly knotted $S^1\times S^3$s with non-homotopic Gauss maps were constructed by Borrelli \cite{Bor}.

\section{Appendix I: Index formula for pseudoholomorphic curves}\label{sect-index-form}

Consider the symplectic manifold $V=\CC^n\setminus L$ with a negative cylindrical end. In this section we will describe the Fredholm index of the linearised $\overline{\partial}$-operator for pseudoholomorphic discs in $V$ having boundary on a Lagrangian submanifold $L'\subset V$ and internal boundary punctures asymptotic to Reeb orbits of the negative cylindrical end. Since we are interested in the case when the negative end corresponds to $(-\infty,0]\times S_{\rho,g}^*L$, where the metric $g$ is non-degenerate in the Bott sense, we also consider index formulas for pseudoholomorphic curves having punctures asymptotic to Reeb orbits which are non-degenerate in the Bott sense.
       
\subsection{The generalised Conley-Zehnder index}
We start with a brief description of the Conley-Zehnder indices of the Reeb orbits in this situation. The Conley-Zehnder index $\mu_{CZ}(\gamma)$ of a Reeb orbit $\gamma$ inside a contact manifold $(Y,\xi=\ker \lambda)$ can be computed as follows, following {\cite[Remark 5.4]{RobSal}}. First, we fix a symplectic trivialisation of the contact distribution $\xi$ along $\gamma$, in which the linearised Reeb flow is expressed as a path of symplectic matrices $\Psi_t$. One gets an induced path of Lagrangian planes inside $(\CC^n \oplus \CC^n,(-\omega_0) \oplus \omega_0)$  parametrised by
\[(\mathrm{Id},\Psi_t) \colon \CC^n \to \CC^n \oplus \CC^n\]
for which one can compute the Maslov index as defined in {\cite[Section 2]{RobSal}} with respect to the Lagrangian reference plane consisting of the diagonal. This is the Conley-Zehnder index.

Observe that this index is defined even in the case when $1$ is an eigenvalue of the return-map of the linearisation, but that it may take half-integer values in this case. When there is a Bott manifold $S$ of Reeb orbits, we will use $\mu_{CZ}(S)$ to denote the Conley-Zehnder index of a Reeb orbit in this family.

\subsection{The Fredholm index for a pseudoholomorphic curve with punctures in the Bott case}
In {\cite[Proposition 2.7]{Bou}} the formula for the Fredholm index of the linearised $\overline{\partial}$-operator for a closed pseudoholomorphic curve inside a symplectic $2n$-dimensional manifold $\overline{X}$ with cylindrical ends is generalised to the case where the Reeb orbits are nondegenerate in the Bott sense. It is shown that a closed pseudoholomorphic curve $C$ of genus $g$ having internal punctures asymptotic to Reeb orbits in the families $S^+_1, \hdots, S^+_{s^+}$ and $S^-_1,\hdots,S^-_{s^-}$ at positive and negative ends, respectively, the Fredholm index satisfies
\begin{align*}
\mathrm{index}(C)&=(n-3)(2-2g-s^+-s^-)+2c_1^{\mathrm{rel}}(C)+ \\
&\ \ \ \ \ + \sum_{i=1}^{s^+}\left(\mu_{CZ}(S^+_i)+\frac{1}{2}\dim S^+_i\right) -\sum_{i=1}^{s^-}\left(\mu_{CZ}(S^-_i)-\frac{1}{2}\dim S^-_i\right).
\end{align*}
Here $c_1^{\mathrm{rel}}(C)$ denotes the first Chern number of the bundle $T\overline{X}$ pulled back to $C$ and extended over the punctures using the trivialisation of $T\overline{X}|_{\gamma} =\CC \oplus \xi|_{\gamma}$ chosen above along the Reeb orbits.

In the case when the curve $C$ has boundary $\partial C$ on a Lagrangian submanifold $L' \subset \overline{X}$, given any trivialisation of $T\overline{X}$ along the boundary, there is an induced Maslov index of $L'$ along this boundary which we denote by $\mu(\partial C)$. One can deduce that
\begin{align}
\nonumber\mathrm{index}(C)&= (n-3)(1-2g-s^+-s^-)+\mu(\partial C)+2c_1^{\mathrm{rel}}(C) +\\
\label{eq-gen-index}&\ \ \ \ \  + \sum_{i=1}^{s^+}\left(\mu_{CZ}(S^+_i)+\frac{1}{2}\dim S^+_i\right) -\sum_{i=1}^{s^-}\left(\mu_{CZ}(S^-_i)-\frac{1}{2}\dim S^-_i\right).
\end{align}
Here $c_1^{\mathrm{rel}}(C)$ denotes the first Chern number of $T\overline{X}$ pulled back to $C/\partial D$ using the trivialisation of $T\overline{X}|_{\partial  D}$ chosen above and extended to the punctures as before.

In the case when the first Chern class $c_1$ vanishes for $\overline{X}$, the above Conley-Zehnder index is canonically defined for nullhomologous Reeb orbits in any trivialisation induced by a choice of bounding chain. Likewise, the Maslov index is canonically defined for any path on $L'$ which is nullhomologous in $\overline{X}$. This follows from the fact that two different choices of bounding chains $A$ and $B$ will give rise to a difference of $2c_1(A-B)$ in the respective index. Finally, in the case $c_1=0$ and when the trivialisations used are induced as above, it also follows that the term $2c_1^{\mathrm{rel}}(C)$ vanishes for these choices of trivialisations.

\subsection{The index formula for discs in \texorpdfstring{$\overline{V}$}{V} with internal punctures }
Consider the symplectic manifold $\overline{V}=\CC^n \setminus L$, where $L \cong S^1 \times S^{n-1}$ is a Lagrangian submanifold. We view $\overline{V}$ as a symplectic manifold with a negative end corresponding to $(-\infty,0] \times S^*_{\rho,g}(S^1 \times S^{n-1})$, where $g$ is the product metric on $S^1 \times S^{n-1}$ for the round metric on $S^{n-1}$. We are now ready to show the following result for discs inside $\overline{V}$ having boundary on a Lagrangian submanifold $L' \subset \overline{V}$. 

For the Maslov index of a closed curve on $L' \subset \overline{V}$ we use the trivialisation of $T\overline{V} \subset T\CC^n$ induced by the canonical trivialisation of $T\CC^n$. Observe that any loop on $L'$ is contractible inside $\overline{V}$ whenever $n>2$ and that the induced trivialisation by any chain bounding a loop on $L'$ agrees with this trivialisation.

\begin{lma}
A pseudoholomorphic disc $D$ with boundary on $L' \subset \overline{V}$ having a number $s^-$ of internal punctures asymptotic to the families $S^-_1,\hdots,S^-_{s^-}$ of Reeb orbits at the negative end of $V$, where $S_i$ corresponds to a family of $m_i$-multiple covers of closed contractible geodesics on $S^1 \times S^{n-1}$, has Fredholm index
\begin{equation}\label{eq-index-form}\mathrm{index}(D)= (n-3)(1-s^-)+\mu(\partial C)-\sum_{i=1}^{s^-}(2m_i-1)(n-2).\end{equation}
\end{lma}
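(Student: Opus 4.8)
The plan is to specialise the general index formula \eqref{eq-gen-index} to the present situation and then to compute, for each negative puncture, the two pieces of data it contributes: the dimension of the Bott family $S_i^-$ of Reeb orbits and its generalised Conley--Zehnder index $\mu_{CZ}(S_i^-)$. First I would apply \eqref{eq-gen-index} with $g=0$ (the domain is a disc) and $s^+=0$ (all punctures are negative, so the symbol $s^+$ in the displayed formula should be read as $s^-$). As explained in the paragraph preceding the lemma, every loop on $L'$ is contractible in $V$ when $n>2$ and the trivialisation induced by a bounding chain agrees with the canonical trivialisation of $T\CC^n$; since $c_1(\CC^n)=0$ this forces $2c_1^{\mathrm{rel}}(D)=0$. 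Thus \eqref{eq-gen-index} collapses to
\[\mathrm{index}(D)=(n-3)(1-s^-)+\mu(\partial D)-\sum_{i=1}^{s^-}\left(\mu_{CZ}(S_i^-)-\tfrac12\dim S_i^-\right),\]
and it remains to show that each summand equals $(2m_i-1)(n-2)$.

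Next I would identify the Reeb orbits. The contact form is minus the Liouville form, so the Reeb flow on $S^*_{\rho,g}(S^1\times S^{n-1})$ is the cogeodesic flow of the product metric. A closed geodesic of $S^1\times S^{n-1}$ is a product of closed geodesics on the two factors, and it is contractible precisely when its $S^1$-component is constant; hence the contractible closed geodesics are the loops $\{p\}\times\tilde c$ with $p\in S^1$ and $\tilde c$ a great circle of the round $S^{n-1}$. The $m_i$-fold covers of these sweep out a Bott family whose image is $S^1\times S^*S^{n-1}$, so that
\[\dim S_i^-=1+\bigl(2(n-1)-1\bigr)=2n-2,\qquad \tfrac12\dim S_i^-=n-1,\]
independently of the multiplicity $m_i$.

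The heart of the argument is the computation of $\mu_{CZ}(S_i^-)$. Along such an orbit the contact distribution $\xi$ splits symplectically as the flat $T^*S^1$-plane together with the $(n-2)$ normal directions of the great circle inside $T^*S^{n-1}$; the remaining tangential plane is exactly the span of the Reeb and Liouville directions that has been quotiented out in forming $\xi$. The normal blocks are the Jacobi rotations of the round sphere (sectional curvature one) traversed $m_i$ times, and the $T^*S^1$-block is a shear from the flat direction. Rather than tracking the half-integer boundary contributions of each block by hand, I would invoke the standard correspondence between the Robbin--Salamon index of the linearised cogeodesic flow and the Morse theory of the energy functional (the Morse index theorem in the form due to Duistermaat, compatible with the conventions of \cite{RobSal}), which in the Bott-nondegenerate case reads $\mu_{CZ}(S_i^-)=\mathrm{ind}(c_i)+\tfrac12\dim S_i^-$, where $\mathrm{ind}(c_i)$ is the Morse index of the closed geodesic $c_i$ on the free loop space. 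The flat $S^1$-factor contributes a constant loop of index zero, while the classical computation of the Morse index of iterated great circles on the round sphere (Bott) gives $\mathrm{ind}(\tilde c^{\,m_i})=(2m_i-1)(n-2)$; hence $\mathrm{ind}(c_i)=(2m_i-1)(n-2)$. Combining these yields $\mu_{CZ}(S_i^-)-\tfrac12\dim S_i^-=(2m_i-1)(n-2)$, and substituting into the collapsed formula gives the statement.

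The main obstacle is the index relation $\mu_{CZ}(S_i^-)=\mathrm{ind}(c_i)+\tfrac12\dim S_i^-$: getting the Robbin--Salamon index right in the Bott-degenerate regime. The normal blocks are full rotations that return to the identity at the end of the period, so the endpoints are genuine crossings contributing half-integers, and the $T^*S^1$-shear is a degenerate path whose index requires the higher-order crossing analysis of \cite{RobSal}. Keeping every sign consistent with the SFT grading of \cite{Bou}, in particular with the contact form taken to be minus the Liouville form, is exactly where care is needed; passing through the Morse-theoretic reformulation is the cleanest way to sidestep the sign bookkeeping, at the cost of invoking the index theorem as a black box.
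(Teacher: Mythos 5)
Your proposal follows essentially the same route as the paper's proof: specialise \eqref{eq-gen-index} with $g=0$, $s^+=0$ and $c_1^{\mathrm{rel}}=0$, identify the asymptotic Reeb orbits with iterated great circles in the $S^{n-1}$-factor, and convert $\mu_{CZ}(S_i^-)-\tfrac12\dim S_i^-$ into the Morse index $(2m_i-1)(n-2)$ via the relation $\mu_{CZ}=\iota+\tfrac12\nu$ (which the paper takes from Cieliebak--Frauenfelder rather than from Duistermaat's Morse index theorem) combined with Bott's computation for the round sphere. The only divergence is in an intermediate quantity that cancels either way: you take $\dim S_i^-=2n-2$ while the paper's Lemma \ref{lma-sphere-index} records the nullity as $n$, but since this term is offset against the half-nullity in the index relation in both treatments, the final formula is unaffected.
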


\begin{proof}
For the trivialisation of the contact distribution $\xi|_{\gamma}$ along a Reeb orbit $\gamma$ of $S_{\rho,g}^*L$ we choose the trivialisation which is induced by a trivialisation of the vertical subbundle of $\xi$, which is a Lagrangian subspace. Observe that this trivialisation agrees with the canonical trivialisation of $\xi$ along any Reeb orbit on $S_{\rho,g}^*L$ which is nullhomologous. Moreover, since $c_1$ vanishes for $\overline{V}$, this means that the term $c_1^{\mathrm{rel}}$ vanishes in the index formula.

Consider the energy functional
\[E(\eta)=\int_{S^1} \|\eta(\theta)\|^2d\theta\]
for closed curves $\eta$ in $L$. Let $H$ denote the Hessian of $E$ at a critical point $\gamma$, which corresponds to a closed geodesic on $L$ parametrised by path of constant speed. We let $\iota(\gamma)$ and $\nu(\gamma)$ denote the dimension of negative-definite eigenspace and nullity of $H$ at $\gamma$, respectively.

We let $\gamma$ be a geodesic on $S^1 \times S^{n-1}$ and $\widetilde{\gamma}$ be the Reeb orbit in $S^*_{\rho,g}(S^1 \times S^{n-1})$ corresponding to the cogeodesic lift. The lemma follows by combining Lemma \ref{lma-sphere-index} below together with Equation \eqref{eq-gen-index} above, and the formula
\[ \mu_{CZ}(\widetilde{\gamma})=\iota(\gamma)+\frac{1}{2}\nu(\gamma)\]
proved in {\cite[Equation 60]{CieFra}}, which holds in the canonical trivialisation.
\end{proof}

\begin{lma}\label{lma-sphere-index}
Let $S^1 \times S^{n-1}$ be endowed with the product metric, where the metric on the factor $S^{n-1}$ is the round metric. A closed contractible geodesic $\gamma$ on $S^1 \times S^{n-1}$, which moreover is the $m$-fold cover of a simply covered geodesic, has Morse-index and nullity satisfying
\[ \iota(\gamma)=(2m-1)(n-2),\qquad \nu(\gamma)=n.\]
\end{lma}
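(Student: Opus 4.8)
The plan is to compute the Hessian of $E$ at $\gamma$ directly from the second variation and to exploit the product structure of the metric. First I would note that any contractible loop must be constant in the $S^1$-factor, since the generator of $\pi_1(S^1)$ has infinite order; hence $\gamma$ lies in a slice $\{\star\}\times S^{n-1}$ and is the $m$-fold iterate of a great circle of the round sphere. Rescaling so the simple great circle has length $2\pi$, I parametrise $\gamma$ with unit speed over $\RR/2\pi m\ZZ$ and write the index form $I(V,W)=\int_0^{2\pi m}\bigl(\langle\nabla_tV,\nabla_tW\rangle-\langle R(V,\dot\gamma)\dot\gamma,W\rangle\bigr)\,dt$ on periodic sections of $\gamma^*T(S^1\times S^{n-1})$, so that $\iota(\gamma)$ is the dimension of a maximal negative subspace and $\nu(\gamma)$ the dimension of the kernel (the periodic Jacobi fields).

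Because the metric is a Riemannian product and $\dot\gamma$ is tangent to the $S^{n-1}$-factor, the bundle $\gamma^*T(S^1\times S^{n-1})$ splits $\nabla_t$-parallelly as $\RR\dot\gamma\oplus(\text{the }S^1\text{-direction})\oplus N$, where $N$ is the rank-$(n-2)$ normal bundle of $\gamma$ inside $S^{n-1}$. The operator $R(\cdot,\dot\gamma)\dot\gamma$ annihilates the first two summands and, by constant curvature $1$, acts as the identity on $N$, so the index form block-diagonalises and I can treat the three blocks separately. The two flat blocks give the operator $-d^2/dt^2$, whose index form $\int|a'|^2$ is positive semidefinite: they contribute nothing to $\iota(\gamma)$, and their kernels are the constant sections.

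For the curved block I would trivialise $N$ by parallel transport, using that the normal spaces to a great circle are the fixed orthogonal complement of its $2$-plane in $\RR^n$ and hence globally parallel, reducing to $n-2$ copies of the scalar operator $-d^2/dt^2-1$ on $\RR/2\pi m\ZZ$. Diagonalising by the Fourier modes $e^{ikt/m}$, the $k$-th eigenvalue is a positive multiple of $(k/m)^2-1$, which is negative exactly for $|k|<m$; counting real dimensions ($k=0$ together with $k=\pm1,\dots,\pm(m-1)$) gives $2m-1$ negative modes per normal line, so $\iota(\gamma)=(2m-1)(n-2)$. The resonant modes $k=\pm m$, namely $\cos t$ and $\sin t$, are precisely the normal periodic Jacobi fields and supply the geometric degeneracy forced by the round metric.

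Assembling the kernel across the three blocks then yields $\nu(\gamma)$, and I expect this to be the delicate step. The index $(2m-1)(n-2)$ is robust and convention-free, but the nullity has to be matched with the convention under which the formula $\mu_{CZ}(\widetilde\gamma)=\iota(\gamma)+\tfrac12\nu(\gamma)$ of \cite{CieFra} and the correction $\tfrac12\dim S$ of \cite{Bou} are stated. Concretely one must decide how the reparametrisation direction $\dot\gamma$, the displacement of the great circle within the Grassmannian of oriented $2$-planes, and translation in the $S^1$-factor are each to be counted, and verify that the resulting $\nu(\gamma)$ coincides with the dimension of the Bott family of cogeodesic Reeb orbits swept out in the slices $\{\star\}\times S^{n-1}$, so that the half-integer corrections cancel and leave the clean summand $(2m-1)(n-2)$ in Equation \eqref{eq-index-form}. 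It is exactly this bookkeeping of the resonant kernel directions, rather than the index count, that carries the content of the nullity assertion.
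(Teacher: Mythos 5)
Your reduction to a slice $\{\star\}\times S^{n-1}$ is correct and matches the paper's argument (the $S^1$-projection of a geodesic is constant speed and of degree zero, hence constant), and your computation of the index is both correct and more self-contained than the paper's: the paper simply quotes Bott's formulae for the index and nullity of the $m$-fold great circle on the round sphere and adds the one extra Jacobi field coming from rotating the $S^1$-factor, whereas you block-diagonalise the index form into $\RR\dot\gamma$, the flat $S^1$-direction and the rank-$(n-2)$ normal bundle inside $S^{n-1}$, and diagonalise the curved block by Fourier modes to get $2m-1$ negative modes per normal line, hence $\iota(\gamma)=(2m-1)(n-2)$. That part of the argument is complete.

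The gap is the nullity, and it is not merely a matter of polish. You correctly identify every kernel direction --- the two resonant fields $\cos t$, $\sin t$ in each of the $n-2$ normal lines, plus the constant sections of the two flat blocks --- but you never add them up, and you explicitly defer the conclusion to a ``bookkeeping of conventions''. The lemma asserts the specific value $\nu(\gamma)=n$, so a proof must end with that number; yours ends with a discussion of which directions \emph{ought} to be counted. Moreover the totals your own computation produces are $2(n-2)+2=2n-2$ (all periodic Jacobi fields) or $2(n-2)+1=2n-3$ (discarding the tangential reparametrisation field), neither of which equals $n$ for general $n$, so the discrepancy you sense is real and cannot be waved away. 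To close the gap you must either exhibit the precise convention for $\nu$ (the one under which the formula $\mu_{CZ}(\widetilde\gamma)=\iota(\gamma)+\tfrac{1}{2}\nu(\gamma)$ of \cite{CieFra} and the correction $\tfrac{1}{2}\dim S$ of \cite{Bou} are stated) and show that under that convention the count is $n$ and, crucially, that $\nu$ equals $\dim S$ --- since the final index formula only uses the combination $\iota+\tfrac{1}{2}(\nu-\dim S)$ --- or else conclude that the stated value of $\nu(\gamma)$ needs correcting. As you yourself observe, once the index is known this is exactly where the content of the lemma lies, which is precisely why it cannot be left as a promissory note.
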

\begin{proof}
Since geodesics on $S^1\times S^{n-1}$ project to geodesics on either factor, it follows that any geodesic which starts and ends on the hypersurface $\{t\} \times S^{n-1}$ must either be a geodesic contained entirely in this hypersurface, or must wrap around the $S^1$-direction a nonzero number of times. Furthermore, a geodesic contained in this hypersurface is a geodesic on the round $S^{n-1}$.

From this it follows that a contractible geodesic on $S^1 \times S^{n-1}$ is contained in such a hypersurface, and that broken Jacobi fields along such a geodesic correspond bijectively to broken Jacobi fields on the corresponding geodesic on the round $S^n$. In addition, the only non-broken Jacobi field along a closed geodesic in $\{t\} \times S^{n-1}$ which does not arise as a Jacobi field on $S^{n-1}$ is the Jacobi field induced by a rotation of the $S^1$-factor.

The formulae on {\cite[Page 346]{Bott}} for the index and nullity of a geodesic on $S^{n-1}$ now implies the result.
\end{proof}

\section{Appendix II: Lazzarini's theorem for punctured discs}\label{app-laz}

We will prove Lemma \ref{lma-laz}:

\begin{lma}
Let $X$ be a symplectic manifold with contact-type boundary, let $\overline{X}$ denote its completion and let $L\subset X$ be a compact Lagrangian submanifold. Let $J$ be a compatible almost complex structure on $\overline{X}$ adapted to the contact structure on the ends. Suppose that $u\colon\Sigma\to\overline{X}$ is a finite-energy punctured $J$-holomorphic disc with boundary on $L$. There exists a somewhere-injective finite-energy punctured $J$-holomorphic disc $v\colon\Sigma'\to\overline{X}$ with boundary on $L$ such that $v(\Sigma')\subset u(\Sigma)$.
\end{lma}
This was proved for non-punctured discs in \cite{Laz2}; we will explain how to modify that proof to incorporate punctures. Recall that the punctures of punctured finite-energy $J$-holomorphic curves are asymptotic to cylinders on Reeb orbits. Since $L$ is disjoint from the cylindrical end of $\overline{X}$ we can analyse the boundary and the punctures of the disc separately. There are a finite number punctures and hence a finite number of asymptotically cylindrical ends of $u$. It follows from work of Siefring that there is a compact subset $K\subset\overline{X}$, containing $X$, with the following properties.
\begin{enumerate}
\item[(a)] In the complement of $K$, each asymptotically cylindrical end of $u$ is an unbranched multiple cover of an embedded $J$-holomorphic half-cylinder. This is {\cite[Corollary 2.6]{Sief}}: the absence of branch points for sufficiently large $K$ follows from the explicit asymptotic form of the covering map.
\item[(b)] In the complement of $K$, the half-cylinders corresponding to different punctures are either disjoint or have the same image. This is {\cite[Corollary 2.5]{Sief}}.
\end{enumerate}
Let $\Sigma_0=u^{-1}(\overline{X}\setminus K)\subset\Sigma$ be the corresponding neighbourhood of the punctures.

\begin{dfn}
Given a punctured finite-energy $J$-holomorphic disc
\[u\colon\Sigma\to\overline{X}\]
with boundary on $L$, let $C(u)=\{x\in\Sigma\ :\ d_xu=0\}$ denote the set of critical points of $u$.
\end{dfn}

\begin{lma}
If $u$ is nonconstant then $C(u)$ is finite and, for any $x\in\overline{X}$ the preimage $u^{-1}(x)$ is finite.
\end{lma}
\begin{proof}
By {\cite[Theorem 3.5]{Laz2}} we know that the set of critical points of $\Sigma\setminus\Sigma_0$ is discrete, hence finite because $\Sigma\setminus\Sigma_0$ is compact. There are no critical points of $u$ in $\Sigma_0$ by (a). Thus $u$ has finitely many critical points.

Similarly each $x\in\overline{X}$ has only finitely many preimages under $u$: for $x\in\overline{X}\setminus K$ this is clear from (a) and (b); for $x\in K$ it follows from {\cite[Theorem 3.5]{Laz2}}.
\end{proof}
Now Lazzarini's strategy to extract a somewhere-injective disc is to consider the {\em frame} of $u$:
\begin{dfn}
Define a relation $\mathcal{R}$ on $\Sigma\setminus C(u)$ by defining $z\mathcal{R}z'$ if and only if, for any neighbourhoods $V,V'$ of $z,z'$ there are open neighbourhoods $z\in W\subset V$ and $z'\subset W'\subset V'$ such that $u(W)=u(W')$. Define $\overline{\mathcal{R}}$ to be the closure of $\mathcal{R}\subset\Sigma\times\Sigma$. 
\end{dfn}
Using (a) and (b) we can completely describe the relation $\overline{\mathcal{R}}$ on $\Sigma_0$: each component $U_i$ of $\Sigma_0$ is a punctured open disc and $U_i/\overline{\mathcal{R}}$ is also a punctured open disc, the map $U_i\to U_i/\overline{\mathcal{R}}$ being an unbranched cover. It is also possible that some components $U_i$ and $U_j$ are identified by the relation.

Lazzarini's analysis of the relation applies unchanged to $\Sigma\setminus\Sigma_0$. In particular {\cite[Theorem 4.7]{Laz2}} the set of points $\overline{\mathcal{R}}(\partial\Sigma)$ related to points on the boundary forms an embedded graph whose vertices lie on points of $u^{-1}(u(C(u)))$. This is called the frame $\mathcal{W}(u)$ and is clearly disjoint from $\Sigma_0$.

Inside the frame is the subset $\mathcal{W}_1(u)$ consisting of points contained on cycles in the graph, that is continuous injections $S^1\to\mathcal{W}(u)$. If we compactify $\Sigma$ to a disc $\Sigma'$ by filling in the punctures then Lazzarini's argument {\cite[Section 5]{Laz2}} guarantees a simply-connected component $D$ of $\Sigma'\setminus\mathcal{W}_1(u)$ which is topologically a disc such that the restriction of $u$ to $D\cap\Sigma$ is a multiple cover of a somewhere-injective punctured $J$-holomorphic disc. The only part of this argument which needs modification to compensate for the punctures is {\cite[Proposition 5.9]{Laz2}}, putting the structure of a Riemann surface on $\Sigma/\overline{\mathcal{R}}$. It is clear from our description of the relation $\overline{\mathcal{R}}$ on $\Sigma_0$ that $\Sigma_0/\overline{\mathcal{R}}$ is a union of punctured open discs.

\section*{Acknowledgements}

We would like to acknowledge helpful discussions and communications with Paul Biran, Felix Schlenk and Chris Wendl. The paper \cite{Bor} proved extremely helpful in crystallising our proof. We would also like to thank the anonymous referee for their careful critique of the paper which has helped us to clarify the exposition at several key points.

\bibliographystyle{plain}
\bibliography{unlink-bib}
\end{document}